\font\elevensc=cmcsc10 scaled \magstephalf
\newcommand{\auau}{{$\hhb{-2}_{\hbox{\large\~{}}}\hhb{-1}$}}
\newcommand{\dwa}{\downarrow}
\newcommand{\dwn}[1]{\big\downarrow\rlap{$\vcenter{\hbox{$\scriptstyle{#1}$}}$}}
\newcommand{\hhb}[1]{{\hbox to#1pt{}}}
\newcommand{\hor}[1]{\smash
         {\mathop{{\lgrghtar}}\limits^{\lower2pt\hbox{$\scriptstyle{#1}$}}}}
\newcommand{\horr}[1]{\smash
         {\mathop{{\lglgrghtar}}\limits^{\lower2pt\hbox{$\scriptstyle{#1}$}}}}
\newcommand{\hr}[1]{\smash
         {\mathop{{\to}}\limits^{\lower2pt\hbox{$\scriptstyle{#1}$}}}}
\newcommand{\shr}[1]{\smash
         {\mathop{{\to}}\limits^{\lower2pt\hbox{$\scriptscriptstyle{#1}\hp$}}}}
\newcommand{\lgrghtar}{{\hhb2{\relbar\joinrel\rightarrow}\hhb2}}
\newcommand{\lglgrghtar}{{\hhb1{\relbar\joinrel\relbar\joinrel\rightarrow}\hhb1}}
\newcommand{\ilim}[1]{\hboxto14pt{%
lim\kern-14pt\lower4.5pt\hbox{$\scriptstyle\longrightarrow$}%
\kern-8pt\lower5pt\hbox{$\scriptstyle{#1}$}}\hhb{3}}
\newcommand{\ilm}[1]{\hboxto14pt{%
lim\kern-14pt\lower5pt\hbox{$\scriptstyle\longrightarrow$}%
\kern-8pt\lower4pt\hbox{$\scriptstyle{#1}$}}\hhb{3}}
\newcommand{\plim}[1]{\hbox to14pt{%
lim\kern-14pt\lower4.5pt\hbox{$\scriptstyle\longleftarrow$}%
\kern-8pt\lower8.5pt\hbox{$\scriptstyle{#1}$}}\hhb{3}}
\newcommand{\plm}[1]{\hbox to14pt{%
lim\kern-14pt\lower5pt\hbox{$\scriptstyle\longleftarrow$}%
\kern-8pt\lower4pt\hbox{$\scriptstyle{#1}$}}\hhb{3}}
\newcommand{\ratmap}{\dashrightarrow}
\newcommand{\rsdp}{{\,\times\kern-3pt\lower-1pt%
\hbox{$\scriptscriptstyle|$\hbp3}}}
\newcommand{\srjr}{\twoheadrightarrow}
\newcommand{\upa}{\uparrow}
\newcommand{\defi}[1]{\textsf{#1}}
\newcommand{\nix}{{\phantom{|}}}
\newcommand{\nmnm}[2]{{\scalebox{.9}[1]{\textsc{#1}}\scalebox{.8}[.9]{\textsc{#2}}}}
\newcommand{\clA}{{\mathcal A}}
\newcommand{\clD}{{\mathcal D}}
\newcommand{\clG}{{\mathcal G}}
\newcommand{\clH}{{\mathcal H}}
\newcommand{\clI}{{\mathcal I}}
\newcommand{\clL}{{\mathcal L}}
\newcommand{\clN}{{\mathcal N}}
\newcommand{\clO}{{\mathcal O}}
\newcommand{\clP}{{\mathcal P}}
\newcommand{\clQ}{{\mathcal Q}}
\newcommand{\clR}{{\mathcal R}}
\newcommand{\clS}{{\mathcal S}}
\newcommand{\clT}{{\mathcal T}}
\newcommand{\clU}{{\mathcal U}}
\newcommand{\clV}{{\mathcal V}}
\newcommand{\clW}{{\mathcal W}}
\newcommand{\clX}{{\mathcal X}}
\newcommand{\clY}{{\mathcal Y}}
\newcommand{\clZ}{{\mathcal Z}}
\let\lv=\mathbb
\newcommand{\lvA}{{\mathbb A}}
\newcommand{\lvF}{{\mathbb F}}
\newcommand{\lvG}{{\mathbb G}}
\newcommand{\lvN}{{\mathbb N}}
\newcommand{\lvP}{{\mathbb P}}
\newcommand{\lvQ}{{\mathbb Q}}
\newcommand{\lvT}{{\mathbb T}}
\newcommand{\lvZ}{{\mathbb Z}}
\newcommand{\euA}{{\mathfrak A}}
\newcommand{\eua}{{\mathfrak a}}
\newcommand{\euD}{{\mathfrak D}}
\newcommand{\euF}{{\mathfrak F}}
\newcommand{\eul}{{\mathfrak l}}
\newcommand{\eum}{{\mathfrak m}}
\newcommand{\euP}{{\mathfrak P}}
\newcommand{\eup}{{\mathfrak p}}
\newcommand{\euR}{{\mathfrak R}}
\newcommand{\euT}{{\mathfrak T}}
\newcommand{\euv}{{\mathfrak v}}
\newcommand{\euw}{{\mathfrak w}}
\DeclareMathOperator{\chr}{char}
\DeclareMathOperator{\dvv}{div}
\DeclareMathOperator{\End}{End}
\DeclareMathOperator{\Gal}{Gal}
\DeclareMathOperator{\GL}{GL}
\DeclareMathOperator{\id}{id} %
\DeclareMathOperator{\im}{im} %
\DeclareMathOperator{\Spec}{Spec}
\newcommand{\Aut}[2]{{\rm Aut}{#1}\hhb{-1}\big(#2\big)}
\newcommand{\bfv}{{\euv}}
\newcommand{\bfw}{{\euw}}
\newcommand{\bfvL}{{\euv_{\!L}}}
\newcommand{\bfva}{{\euv_\alpha}}
\newcommand{\bsl}[2]{{{#1}\backslash{#2}}}
\newcommand{\compreglike}{complete regular like}
\newcommand{\Div}{{\rm Div}}
\newcommand{\DD}[1]{{D\hhb{-.5}_{#1}}}
\newcommand{\dvdv}{{\rm div}}
\newcommand{\euCl}{{\mathfrak{Cl}}}
\newcommand{\eudim}{{\mathfrak{dim}}}
\newcommand{\eutd}{{\mathfrak{td}}}
\newcommand{\fda}[1]{{#1'}}
\newcommand{\fdc}[1]{#1^{{\scriptstyle c^{\!\!\phantom|}}}}
\newcommand{\Ggr}[1]{\Gal_{#1}}
\newcommand{\Gm}{{\lv G}_m}
\newcommand{\hm}[1]{\hhb{-#1}}
\newcommand{\hmm}{\hhb{-2.25}}
\newcommand{\hp}{\hhb1}
\newcommand{\hpp}{\hhb2}
\newcommand{\hla}{\hookleftarrow}
\newcommand{\hra}{\hookrightarrow}
\newcommand{\Inr}{{\mathfrak{In}}}
\newcommand{\Inrtm}{{\mathfrak{In.tm}}}
\newcommand{\Inrdiv}{{\mathfrak{In.div}}}
\newcommand{\Inrqdiv}{{\mathfrak{In.q.div}}}
\newcommand{\Inrtmqdiv}{{\mathfrak{In.tm.q.div}}}
\newcommand{\Isom}{{\rm Isom}}
\newcommand{\imm}{{\rm im}}
\newcommand{\itm}[2]{\begin{itemize}[leftmargin=#1pt]
                     #2\end{itemize}}
\newcommand{\jmthim}[1]{{\jmath_{#1}(#1\tms\hhb1)}}
\newcommand{\jmtha}[1]{{\jmath_{#1}:#1\tms\to\whgen{#1}}}
\newcommand{\Kv}{{K\hhb{-1.5}v}}  
\newcommand{\Kbfv}{{K\!\bfv}}  
\newcommand{\Kbfw}{{K\hhb{-1}\bfw}}  
\newcommand{\Ktlv}{{K\hhb{-1}\tilde v}}  
\newcommand{\Ktlbfv}{{K\hhb{-1}\tlbfv}}  
\newcommand{\Ktlbfvm}{{K\hhb{-1}\tlbfvm}}  
\newcommand{\Ktlw}{{K\hhb{-1.5}\tilde w}}  
\newcommand{\Ktlbfw}{{K\hhb{-1.5}\tlbfw}}
\newcommand{\kr}{{k\bfw}}
\newcommand{\Kr}{\Kbfw}
\newcommand{\lr}{{l\bfw}}
\newcommand{\kone}{{{\textsl{\textsf{k}}}_{\scriptscriptstyle1}}}
\newcommand{\kpu}{\kappa_u}
\newcommand{\kpx}{{\kappa_x}}
\newcommand{\kpy}{{\kappa_y}}
\newcommand{\Lbfw}{{L\hm{.125}\bfw}}
\newcommand{\lxzl}[1]{\clL_{#1}}
\newcommand{\md}{\hhb1|\hhb1}
\newcommand{\mT}{T^1}
\newcommand{\mZ}{Z^1}
\newcommand{\nx}[1]{{#1^{^{\rm n}}}}
\newcommand{\oli}{\overline}
\newcommand{\ot}{\leftarrow}
\newcommand{\Pic}{{\rm Pic}}
\newcommand{\pel}{{(\ell)}}
\newcommand{\pia}[1]{\Pi_{#1}}
\newcommand{\pic}[1]{\Pi^{{\scriptstyle c^{\phantom|}}}_{#1}}
\newcommand{\qtottrK}{\clQ^{\rm tot}_{K}}
\newcommand{\qtottrL}{\clQ^{\rm tot}_{L}}
\newcommand{\shra}[1]{\!\!\!\hookrightarrow\!\!\!}
\newcommand{\shor}[1]{\!\!\!\!\!\hor{#1}\!\!\!\!\!}
\newcommand{\shorr}[1]{\!\!\!\!\!\horr{#1}\!\!\!\!\!}
\newcommand{\spsp}{{\rm sp}}
\newcommand{\str}[2]{{{}^*\hhb{-#1}#2}}
\newcommand{\TT}{T}
\newcommand{\td}[2]{{{\rm td}\hhb{.5}(#1\hhb{.5}|\hhb{.5}#2)}}
\newcommand{\tl}[1]{{\tilde{#1}}}
\newcommand{\tlv}{{\tilde v}}
\newcommand{\tlw}{{\tilde w}}
\newcommand{\tms}{^\times\!}
\newcommand{\tlbfv}{{\hhb3\tilde{}\hhb{-3}\bfv}}
\newcommand{\tlbfw}{{\hhb{4.5}\tilde{}\hhb{-4.5}\bfw}}
\newcommand{\tlbfvm}{{\hhb2\tilde{}\hhb{-2}\bfv}}
\newcommand{\tlbfwm}{{\hhb{3}\tilde{}\hhb{-3}\bfw}}
\newcommand{\tottrK}{{\clD^{\rm tot}_{\hhb{-2}K}}}
\newcommand{\tottrL}{{\clD^{\rm tot}_L}}
\newcommand{\trK}{\clD_{\!K}}
\newcommand{\trL}{\clD_{\hm1 L}}
\newcommand{\uux}{{{\textsf u}}}
\newcommand{\Val}[1]{{\rm Val}_{#1}}
\newcommand{\vcr}[1]{v_{\hm1\scriptscriptstyle#1}}
\newcommand{\vid}{{\lower-1pt\hbox{/}\kern-7pt{\hbox{O}}}}
\newcommand{\whkpp}[1]{\widehat{\hhb1
  \kappa_{\lower2pt\hbox{$\scriptstyle#1$}}\hhb2}}
\newcommand{\whgen}[1]{\hhb{1}{\lower.5pt\hbox{$
    \widehat{\phantom{K}}$}\hhb{-11.75}#1}}
\newcommand{\whgne}[1]{\hhb5{\lower1pt\hbox{$
    \widehat{\phantom|\hhb{-8.5}}$}}}
\newcommand{\whgngn}[1]{\hhb{1}{\lower.5pt\hbox{$
    \widehat{\phantom{\Kv}}$}\hhb{-15}#1}}
\newcommand{\whCl}{\hhb{1}{\lower.5pt\hbox{$
    \widehat{\phantom{\euCl}}$}\hhb{-10}\euCl}}
\newcommand{\whDiv}{{\widehat{\rm Div}}}
\newcommand{\whK}{\hhb{1}{\lower.5pt\hbox{$
    \widehat{\phantom{K}}$}\hhb{-11.75}K}}
\newcommand{\whL}{\hhb{1}{\lower.5pt\hbox{$
    \widehat{\phantom{L}}$}\hhb{-9}L}}
\newcommand{\whU}{\hhb{1}{\lower.5pt\hbox{$
    \widehat{\phantom{U}}$}\hhb{-9.5}U}}
\newcommand{\whclL}{{\widehat\clL}}
\newcommand{\whKfin}{\hhb{1}{\lower.5pt\hbox{$
    \widehat{\phantom{K}}$}\hhb{-12}K_{\rm fin}}}
\newcommand{\whKbfvfin}{{\whgngn{\Kbfv_{\rm fin}}}}
\newcommand{\whKbfv}{{\whgngn{\Kbfv}}}
\newcommand{\whLfin}{{\whgen{L_{\rm fin}}}}
\newcommand{\whlalp}{{\widehat\clL_\alpha}}
\newcommand{\whph}{{\hat\phi}}
\newcommand{\wtl}[1]{{\widetilde{#1}}}
\newcommand{\XX}{{X}}
\newcommand{\xx}{{{\textsf x}}}
\newcommand{\yx}{{{\textsf y}}}
\newcommand{{\Yi}}{Y^{\rm i}}
\newcommand{{\Ys}}{Y^{\rm s}}
\newcommand{\YY}{Y}
\newcommand{\ZZ}{Z}
\newcommand{\Zell}{\lvZ_\ell}
\newcommand{\Zetell}{\lvZ_\pel}
\newcommand{\zX}{V}
\newcommand{\zY}{W}
\newcommand{\zclX}{\clV}
\newcommand{\zclY}{\clW}
\newtheorem{thm}{Theorem}[section]
\newtheorem{lem}[thm]{Lemma}
\newtheorem{prop}[thm]{Proposition}
\theoremstyle{definition}
\newtheorem{constr}[thm]{Construction}
\newtheorem{definition}[thm]{Definition}
\newtheorem{definition/remark}[thm]{Definition/Remark}
\newtheorem{example/fact}[thm]{Example/Fact}
\newtheorem{fact}[thm]{Fact}
\newtheorem{notation}[thm]{Notation}
\newtheorem{fact/definition}[thm]{Fact/Definition}
\newtheorem{fact/nota}[thm]{Fact/Notations}
\newtheorem{remark}[thm]{Remark}
\newtheorem{prepa/nota}[thm]{Preparation/Notations}
\newtheorem{remark/definition}[thm]{Remark/Definition}
\newtheorem{remarks/notations}[thm]{Remarks/Notations}
\newtheorem{remarks}[thm]{Remarks}
\title[On Bogomolov's birational 
         anabelian program]
         {\normalsize Reconstruction of Function Fields from their
         \vskip4pt
         pro-$\ell$ abelian divisorial inertia
         \vskip6pt
{\scriptsize\hhb2 --- with Applications to Bogomolov's Program and I/OM ---\hhb2}}
\author{Florian Pop}
\begin{document}

\begin{abstract}
Let $\pic K\to\pia K$ be the maximal pro-$\ell$ 
abelian-by-central, respectively abelian, Galois groups
of a function field $K|k$ with $k$ algebraically closed 
and ${\rm char}\neq\ell$. We show that $K|k$ can be 
functorially reconstructed by group theoretical recipes 
from $\pic K$ endowed with the set of divisorial inertia 
$\Inrdiv(K)^\nix\subset\pia K$. As applications, one has:
(i)~A group theoretical recipe to reconstruct $K|k$ from 
$\pic K$, provided either $\td K k\!>{\rm dim}(k)\!+\!1$ or
$\td K k >{\rm dim}(k)>1$, where ${\rm dim}(k)$ is 
the Kronecker dimension; 
(ii)~An application to the {\it pro-$\ell\!$ 
abelian-by-central\/} I/OM (Ihara's question / 
Oda-Matsumoto conjecture), which in the 
cases considered here implies the classical I/OM.
\end{abstract}

\subjclass{Primary 12E, 12F, 12G, 12J; Secondary 12E30,
12F10, 12G99}

\keywords{anabelian geometry, function fields,
(generalized) [quasi] prime divisors, decomposition
graphs, Hilbert decomposition theory, pro-$\ell$ 
Galois theory, algebraic/\'etale fundamental group}

\thanks{Partially supported by the Simons Grant~342120.}
\date{Variant of July 4, 2018}

\maketitle

\section{Introduction}


\vskip3pt
\noindent
We introduce notations which will be used throughout
the manuscript as follows: For a fixed prime number 
$\ell$, and function fields $K|k$ over algebraically closed 
base fields $k$, we denote by 
\[
\pic K:={\rm Gal}(\fdc K|K)\to{\rm Gal}(\fda K|K)=:\pia K
\]
the Galois groups of the maximal \defi{pro-$\ell$ 
abelian-by-central}, respectively \defi{pro-$\ell$ abelian} 
extensions $\fdc K|K \hla \fda K|K$ of $K$ (in some 
fixed algebraic closure, which is not necessary to be 
explicitly specified). Notice that $\pic K\to\pia K$ 
viewed as a morphism of abstract profinite groups can 
be recovered by a {\it group theoretical recipe\/} from 
$\pic K$, because $\ker(\pic K\to\pia K)=[\pic K,\pic K]$.
\vskip5pt
\noindent
{\bf Hypothesis.} If not otherwise specified, we will also 
suppose that $\chr(k)\neq\ell$.
\vskip5pt
The present work concerns a program initiated by 
\nmnm{B}{ogomolov}~[Bo] at the beginning of the 
1990's, which aims at giving group theoretical recipes 
to reconstruct $K|k$ from $\pic K$ in a functorial 
way, provided $\td K k >1$. Notice that in contrast 
to Grothendieck's (birational) anabelian philosophy, 
where a rich arithmetical Galois action is part of the 
picture, in Bogomolov's program there is \underbar{no} 
arithmetical action (because the base field $k$ is 
algebraically closed). Note also that the hypothesis 
$\td K k >1$ is necessary, because $\td K k =1$ 
implies that the absolute Galois group $G_K$ is 
profinite free on $|k|$ generators (as shown by 
\nmnm{D}{ouady}, \nmnm{H}{arbater}, \nmnm{P}{op}), 
and therefore, $\pic K$ encodes only the cardinality 
$|k|$ of the base field $|k|$, hence that of $K$ as well.
\vskip3pt
A strategy to tackle Bogomolov's program was 
proposed by the author in 1999, and along the 
lines of that strategy, the Bogomolov program was 
completed for $k$ an algebraic closure of a finite 
field by \nmnm{B}{ogomolov}--\nmnm{T}{schinkel}~[B--T] 
and \nmnm{P}{op}~[P4].
\vskip3pt
In order to explain the results of this note, we recall a few
basic facts as follows.
\vskip3pt
For a further function field $L|l$ with $l$ algebraically 
closed, let $\Isom^{\rm F}(L,K)$ be the set of the 
isomorphisms of the pure inseparable closures 
$L^{\rm i}\to K^{\rm i}$ up to Frobenius twists. Further, let 
${\rm Isom}^{\rm c}(\pia K,\pia L)$ be the set of the 
abelianizations $\Phi:\pia K\to\pia L$ of the isomorphisms 
$\pic K\to\pic L$ modulo multiplication by $\ell$-adic units. 
Notice that given $\phi\in\Isom^{\rm F}(L,K)$, and any
prolongation ${\fda\phi}:{L'}^{\rm }\to{K'}^{\rm }$ 
of $\phi$ to ${L'}^{\rm }$, one gets $\Phi_\phi\in\Isom^{\rm c}
(\pia K,\pia L)$ defined by $\Phi_\phi(\sigma):=
{\fda\phi}^{-1}\circ\sigma\circ\fda\phi$ for $\sigma\in\pia K$.
Noticing that $\Phi_\phi$ depends on $\phi$ only, and not on 
the specific prolongation~$\fda\phi$, one finally gets a 
canonical embedding
\[
\Isom^{\rm F}(L,K)\to{\rm Isom}^{\rm c}(\pia K,\pia L), 
\quad \phi\mapsto\Phi_\phi.
\] 

Next recall that given a function field $K|k$ as above,  
the \defi{prime divisors} of $K|k$ are the valuation~$v$ 
of $K|k$ defined by the Weil prime divisors of the 
normal models $X$ of $K|k$. It turns out that a 
valuation $v$ of $K$ is a prime divisor if and only 
if $v$ is trivial on $k$ and its residue field $\Kv$ satisfies 
$\td K k =\td \Kv k +1$. For any valuation $v$ of $K$ 
we denote by $T_v\subseteq Z_v\subset\pia K$ the 
inertia/decomposition groups of some prolongation $v$ 
to $\fda K$ (and notice that $T_v\subset Z_v$ depend 
on $v$ only, because $\pia K$ is abelian). We denote by 
$\clD_{K|k}$ the set of all the prime divisors of $K|k$, and
consider the set of \defi{divisorial inertia} in $\pia K$
\[
\Inrdiv(K):=\cup_{v\in\clD_{K|k}} T_v\subset\pia K.
\]

Finally recall that a natural generalization of prime 
divisors are the \defi{quasi prime divisors} of $K|k$. 
These are the valuations $\bfv$ of $K|k$ (not necessarily 
trivial on~$k$), minimal among the valuations of $K$ 
satisfying: \ i)~$\td Kk=\td\Kbfv{k\bfv}+1$; \
ii)~$\bfv K/\bfv k\cong\lvZ$.  [Here, the minimality 
of $\bfv$ means (by definition) that if $w$ is a 
valuation of $K$ satisfying~i),~ii), and the valuation 
rings satisfy $\clO_{w}\supseteq\clO_\bfv$, then 
$w=\bfv$.] In particular, the prime divisors of $K|k$ 
are precisely the quasi prime divisors of $K|k$ that 
are trivial on $k$. For quasi prime divisors~$\bfv$, 
let $\mT_\bfv\subset\mZ_\bfv\subset\pia K$
be their \defi{minimized} inertia/decomposition
groups, see~Section~2,~A), and/or~\nmnm{P}{op}~[P5] 
and \nmnm{T}{opaz}~[To2], for definitions. We denote 
by $\clQ_{K|k}$ the set of quasi prime divisors of $K|k$,
and recall that $\mT_\bfv=T_\bfv$, $\mZ_\bfv=Z_\bfv$ 
are the usual inertia/decomposition groups if 
${\rm char}(\Kbfv)\neq\ell$. By \nmnm{P}{op}~[P1] and 
\nmnm{T}{opaz}~[T1], there are ({\it concrete uniform\/}) 
{\it group theoretical recipes\/} such that given $\pic K$,
one can recover the (minimized) quasi divisorial groups 
groups $\mT_\bfv\subset\mZ_\bfv\subset\pia K$, thus 
the set of \defi{(minimized) quasi-divisorial inertia} 
\[
\Inrqdiv^1(K):=\cup_{\bfv\in\clQ_{K|k}}\mT_\bfv\subset\pia K.
\] 
Moreover, the recipes to do so are invariant under 
isomorphisms, i.e., in the above notations,  every 
$\Phi\in {\rm Isom}^{\rm c}(\pia K,\pia L)$ has the property 
that  $\Phi\big(\Inrqdiv^1(K)\big)=\Inrqdiv^1(L)$.
\vskip3pt
Unfortunately, for the time being, in the case $k$ 
is an arbitrary algebraically closed base field, one does 
not know group theoretical recipes neither to distinguish 
the divisorial subgroups among the quasi divisorial 
ones, nor to describe/recover $\,\Inrdiv(K)=\cup_v T_v$ 
inside $\Inrqdiv^1(K)=\cup_\bfv\mT_\bfv$,  using 
solely the group theoretical information encoded in 
$\pic K$. (Actually, the latter apparently weaker 
question turns out to be equivalent to the former one.)
\vskip2pt
The main result of this note reduces the Bogomolov 
program about reconstructing function fields $K|k$ 
with $\td Kk>2$ from the group theoretical information 
encoded in $\pic K$ to giving group theoretical recipes 
such that given $\pic K$ one can answer the following:
\vskip7pt
\centerline{{\it \defi{Recover $\,\Inrdiv(K)=\cup_v T_v\,$ 
inside $\,\Inrqdiv^1(K)=\cup_\bfv \mT_\bfv\,$ 
using the pro-$\ell$ group $\,\pic K$.}\/}}
\vskip7pt
\noindent
The precise result reads as follows:
\begin{thm}
\label{mainthm}
In the above notations, there exists a group theoretical recipe
about pro-$\ell$ abelian-by-central groups such that the 
following hold:
%
\begin{itemize}[leftmargin=30pt]
\item[{\rm1)}] The recipe reconstructs $K|k$ from 
$\pic K$ endowed with~$\Inrdiv(K)\subset\pia K$ in a functorial 
way, provided $\td K k >2$.
\vskip4pt
\item[{\rm2)}]  The recipe is invariant under isomorphisms 
$\Phi\in {\rm Isom}^{\rm c}(\pia K,\pia L)$ in the sense that  
if $\Phi\big(\Inrdiv(K)\big)=\Inrdiv(L)$, then $\Phi=\Phi_\phi$ 
for a $($\hhb{-1}unique$)$ $\phi\in{\rm Isom}^{\rm F}(L,K)$. 
\end{itemize}
\end{thm}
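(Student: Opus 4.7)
The plan is to build on the group-theoretical recipe of \nmnm{P}{op}~[P1] and \nmnm{T}{opaz}~[T1] that already recovers $\Inrqdiv^1(K)=\cup_\bfv\mT_\bfv$ together with the individual minimized quasi-divisorial flags $\mT_\bfv\subset\mZ_\bfv\subset\pia K$ from $\pic K$, and to refine this recipe using the extra datum $\Inrdiv(K)\subset\pia K$. The first step is to separate the divisorial flags from the merely quasi-divisorial ones: for each prime divisor $v$, isolate the inertia $T_v$ as a maximal procyclic subgroup of $\pia K$ whose non-trivial elements all lie in $\Inrdiv(K)$, and then pick out the decomposition group $Z_v$ as the unique minimized quasi-divisorial $\mZ_\bfv$ whose minimized inertia coincides with $T_v$. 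Since $\chr(k)\neq\ell$, the quotient $Z_v/T_v$ is canonically identified with $\pia{\Kv}$, and iterating the same construction inside each residue field $\Kv$ (which is a function field with $\td{\Kv}{k}=\td{K}{k}-1$) produces the full decomposition graph of $K|k$ together with the subset of divisorial inertia at every stratum.

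With the decomposition graph in hand, I would pass to Kummer duality: a chosen compatible system of $\ell$-power roots of unity identifies $\pia K$ with the continuous $\Zell$-dual of the $\ell$-adic completion $\widehat{K^\times/k^\times}$, and under this duality each pair $T_v\subset Z_v$ dualizes to the valuation exact sequence attached to $v$. Collecting this over all $v\in\clD_{K|k}$ recovers the divisor map $\dvv\colon\widehat{K^\times/k^\times}\to\prod_v\Zell$, and the subgroup $K^\times/k^\times$ is then singled out inside $\widehat{K^\times/k^\times}$ as the saturated sublattice on which the divisor map takes finitely supported integral values. This reconstructs the multiplicative group $K^\times/k^\times$ together with all of its valuation data in a purely group-theoretic way, and by the uniformity of the steps above, any $\Phi\in\Isom^{\rm c}(\pia K,\pia L)$ with $\Phi(\Inrdiv(K))=\Inrdiv(L)$ transports this structure faithfully to the $L$-side.

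The decisive and technically hardest step is to upgrade this multiplicative--divisorial datum to the field $K$ itself (up to Frobenius twist). Following the strategy of \nmnm{B}{ogomolov}--\nmnm{T}{schinkel}~[B--T], \nmnm{P}{op}~[P4] and \nmnm{T}{opaz}~[To2], I would intrinsically characterize rational one-dimensional subfields $k(t)\subset K$ via the divisorial signature of $k(t)^\times/k^\times$ (a free abelian group generated by the degree-one point-divisors of $\lvP^1$), and then detect every additive relation $y=1-x$ between classes in $K^\times/k^\times$ through combinatorial and incidence conditions on the two-dimensional subfield $k(x,y)$ and its prime divisors. The hypothesis $\td{K}{k}>2$ enters essentially here: it guarantees that residues at prime divisors are again function fields of transcendence degree $>1$, so that an induction on $\td{K}{k}$ applies and ultimately reduces to the surface case handled in [B--T, P4, To2]; it also ensures the existence of enough prime divisors and sub-$\lvP^1$'s to pin down every addition relation. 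The main obstacle throughout is precisely this recognition of lines and of addition from purely multiplicative--divisorial data; once it is settled, the recipe is by construction uniform and isomorphism-invariant, so any such $\Phi$ must be of the form $\Phi_\phi$ for a unique $\phi\in\Isom^{\rm F}(L,K)$, giving assertion~(2) as a direct corollary of the construction in~(1).
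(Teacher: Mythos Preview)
Your Step~2 contains the essential gap. The claim that $K^\times/k^\times$ is ``the saturated sublattice on which the divisor map takes finitely supported integral values'' fails on two counts. First, the canonical inertia generators $(\tau_v)_v$ are \emph{not} group-theoretically recoverable from $\pic K$; as the paper stresses explicitly, one only recovers systems $(\tau_v^{\epsilon_v})_v$ with $\epsilon_v\in\epsilon\cdot\lvZ_{(\ell)}^\times$ for some global $\epsilon\in\Zell^\times$, so ``integral'' is not a well-defined condition on individual coordinates. Second, even granting the canonical generators, the preimage of $\Div(D)_{(\ell)}$ under the completed divisor map is the divisorial $\whU_K$-lattice $\lxzl K$, and this strictly contains $\whU_K$---the $\ell$-adic dual of the birational fundamental group $\Pi_{1,K}$---which is typically a nontrivial $\Zell$-module meeting $K^\times/k^\times$ only in $\{1\}$. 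So your proposed recipe recovers at best $\lxzl K$ (the paper's ``first approximation''), not $\clP(K)=K^\times/k^\times$.

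Bridging the gap from $\lxzl K$ to (something close to) $\clP(K)$ is precisely the heart of the paper, and it does \emph{not} proceed by induction on $\td Kk$ down to the surface case: the surface case over arbitrary algebraically closed $k$ is not known, so your proposed induction has no base. Instead, the paper specializes along quasi prime divisors $\bfv$ whose residue base field $k\bfv$ is an algebraic closure of a finite field (where [P4] applies to $\Kbfv|k\bfv$), and uses this to cut $\lxzl K$ down to a ``quasi arithmetical'' lattice $\lxzl K^0$. The crucial technical input---entirely absent from your sketch---is Jossen's theorem (Appendix, generalizing Pink) on the $\ell$-divisibility of orders of specialized points on abelian schemes, applied to $\Pic^0$ of an alteration; this is what forces $\lxzl K^0/\big(\whU_K\cdot\clP(K)\big)$ to be torsion. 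Only then does one recover the rational projections $\pia K\to\pia{k(x)}$ and invoke the Main Theorem of~[P3]. The hypothesis $\td Kk>2$ enters not to enable a descent to surfaces, but to ensure that the residue function fields $\Kbfv|k\bfv$ have $\td{\Kbfv}{k\bfv}>1$, so that the finite-field results of~[P4] are applicable there.
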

The proof of the above Theorem~\ref{mainthm} 
relies heavily on ``specialization'' techniques, among other 
things, Appendix,~Theorem~\ref{Thm:Main} by~\nmnm{J}{ossen}
(generalizing \nmnm{P}{ink}~[Pk],~Theorem~2.8), and previous 
work by the author \nmnm{P}{op}~[P1],~[P2],~[P3],~[P4]. For
reader's sake, 
I added at the beginning of the next section, precisely
in~section~2),~A) explanations about the strategy the 
logical structure of quite involved proof.
\vskip5pt
Concerning applications of Theorem~\ref{mainthm} above,
the point is that under supplementary conditions on the base
field $k$, one can recover $\Inrdiv(K)$ inside $\Inrqdiv^1(K)$,
thus~Theorem~\ref{mainthm} above is applicable. Namely, 
recall that the \defi{Kronecker dimension} $\dim(k)$ 
of $k$ is defined as follows: 
First, we set $\dim(\,\lvF_p)=0$ and $\dim(\lvQ)=1$, 
and second, if $k_0\subset k$ is the prime field of $k$, we define
$\dim(k):=\td k {k_0}+\dim(k_0)$. Hence $\dim(k)=0$ if 
and only if $k$ is an algebraic closure of a finite field, 
and $\dim(k)=1$ if and only if $k$ is an algebraic 
closure of a global field, etc. Therefore, 
$\td K k >1$ is equivalent to $\td K k >\dim(k)+1$ in the 
case $k$ is an algebraic closure of a finite field, whereas 
$\td K k >\dim(k)+1$ is equivalent to $\td K k >2$ if $k$ 
is an algebraic closure of a global field, etc. 
\vskip3pt
In light of the above discussion and notations, 
one has the following generalization of the 
main results of 
\nmnm{B}{ogomolov}--\nmnm{T}{schinkel}~[B--T], 
\nmnm{P}{op}~[P4]: 
\begin{thm} 
\label{appl1}
Let $K|k$ be an function field with $\td K k >1$ and 
$k$ algebraically closed of Kronecker dimension
$\dim(k)$. The following hold:
\vskip3pt
\begin{itemize}[leftmargin=30pt]
\item[{\rm1)}] For every nonnegative integer $\delta$, 
there exists a group theoretical recipe $\eudim(\delta)$ 
depending on $\delta$, which holds for $\pic K$ if and 
only if $\,\dim(k)=\delta$ and $\td K k >\dim(k)$.
\vskip3pt
\item[{\rm2)}] There is a group theoretical recipe which 
recovers $\Inrdiv(K)\subset\pia K$ from $\pic K$, provided 
$\,\td K k >\dim(k)$. Thus by\/ {\bf Theorem~\ref{mainthm},~1)}, 
one can reconstruct $K|k$ functorially from $\pic K$, provided
either $\,\td K k >\dim(k)+1$, or $\,\td K k >\dim(k)>1$.
\vskip3pt
\item[{\rm3)}] Both recipes above are invariant under 
isomorphisms of profinite groups as follows: Suppose that
$\pic K$ and $\pic L$ are isomorphic. Then $\td K k = \td L l$, 
and one has: $\,\dim(k)=\delta$ and $\td K k >\dim(k)$ 
if and only if $\,\dim(l)=\delta$ and $\td L l >\dim(l)$. 
\vskip3pt
\item[{\rm4)}] In particular, if either 
$\,\td K k >\dim(k)+1$, or $\,\td K k >\dim(k)>1$ holds,
then by {\bf Theorem~\ref{mainthm},~2)}, one concludes 
that the canonical map below is a bijection:
\vskip7pt
\centerline{$\Isom^{\rm F}(L,K)\hor{}
      \Isom^{\rm c}(\pia K,\pia L)$, \ $\phi\mapsto\Phi_\phi$.}
\end{itemize}
\vskip3pt
\end{thm}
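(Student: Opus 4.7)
The main technical work is concentrated in part~(2): producing a group-theoretical procedure that singles out divisorial inertia $\Inrdiv(K)$ within the already-known set of quasi-divisorial inertia $\Inrqdiv^1(K)=\cup_\bfv\mT_\bfv$, which is reconstructible from $\pic K$ by \nmnm{P}{op}~[P1] and \nmnm{T}{opaz}~[T1]. Once (2) is established, parts (1), (3), (4) follow essentially formally by combining it with Theorem~\ref{mainthm}.

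The plan is first to encode $\td K k$ and the Kronecker dimension $\dim(K)$ group-theoretically in $\pic K$. For a function field over an algebraically closed base of characteristic $\neq\ell$, the transcendence degree $\td K k$ is detectable from $\pic K$ via the cohomological $\ell$-dimension. The Kronecker dimension $\dim(K)$ is then recovered as the length of a maximal descending flag of quasi-divisorial subgroups inside $\pia K$: each iterated quasi-divisorial specialization lowers the Kronecker dimension of the residue by exactly $1$, terminating in a residue whose Galois group is that of an algebraic closure of a finite field (Kronecker dimension~$0$). The Appendix theorem of Jossen, generalizing \nmnm{P}{ink}~[Pk], is the key tool controlling the residue arithmetic along this flag. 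Setting $\dim(k):=\dim(K)-\td K k$ and defining $\eudim(\delta)$ to be the property ``the above procedure returns $\dim(k)=\delta$ and $\td K k>\delta$'' yields the recipe in~(1); part~(3) is immediate, since the construction only invokes group-theoretic invariants of $\pic K$.

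Next I would use $\eudim(\dim(k))$ to distinguish divisorial from quasi-divisorial inertia. For a quasi prime divisor $\bfv$ of $K|k$, the quotient $\mZ_\bfv/\mT_\bfv$ carries the pro-$\ell$ abelian-by-central Galois structure of the residue function field $\Kbfv\hhb1|\hhb1 k\bfv$, with $\td{\Kbfv}{k\bfv}=\td K k-1$. Such $\bfv$ is divisorial iff it is trivial on $k$, iff $k\bfv=k$, iff $\dim(k\bfv)=\dim(k)$. Under the hypothesis $\td K k>\dim(k)$ one has $\td{\Kbfv}{k\bfv}\ge\dim(k\bfv)$, and applying $\eudim(\dim(k))$ to the abelian-by-central group extracted from $\pic K$ governing $\Kbfv\hhb1|\hhb1 k\bfv$ returns TRUE precisely when $\bfv$ is divisorial. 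Taking the union of the corresponding $\mT_\bfv$'s reconstructs $\Inrdiv(K)$; combined with Theorem~\ref{mainthm},~1), this proves~(2). For~(4), any $\Phi\in\Isom^{\rm c}(\pia K,\pia L)$ respects all the invariants in question, so by~(3) the numerical hypothesis holds simultaneously on both sides, by~(2) $\Phi\bigl(\Inrdiv(K)\bigr)=\Inrdiv(L)$, and Theorem~\ref{mainthm},~2) yields the unique $\phi\in\Isom^{\rm F}(L,K)$ with $\Phi=\Phi_\phi$.

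The hardest step will be the group-theoretic reading of the Kronecker dimension of the base field $k$ from $\pic K$ alone. This is where the Jossen/Pink input from the Appendix is essential: one must distinguish ``arithmetic'' quasi-divisorial layers (those nontrivial on $k$, which lower $\dim(k)$) from ``geometric'' divisorial layers (trivial on $k$, which lower $\td K k$); the distinction is invisible at the level of a single quasi-divisor and only emerges from the global flag combinatorics of $\Inrqdiv^1(K)$ as $\bfv$ varies.
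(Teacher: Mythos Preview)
Your overall architecture---reduce to recovering $\Inrdiv(K)$ inside $\Inrqdiv^1(K)$, then invoke Theorem~\ref{mainthm}---matches the paper's, and parts~(3),~(4) indeed follow formally once~(1),~(2) are in hand. But the substance of your argument for~(1) and~(2) has two genuine gaps.

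First, you misplace the Jossen/Pink input. The Appendix theorem on orders of reductions of points on abelian schemes is used exclusively inside the proof of Theorem~\ref{mainthm} (in the specialization step, Proposition~\ref{specres2}); it plays no role in the proof of Theorem~\ref{appl1}. The actual device that detects the ``nature of $k$'' is the \emph{complete curve like} criterion (Fact~\ref{redtocurves}): for a quasi prime $(d{-}1)$-divisor $\bfw$, the minimized residue group $\Pi^1_{K\bfw}$ endowed with the family of maximal procyclic subgroups contained in the quasi-divisorial inertia is complete curve like if and only if $k\bfw$ is an algebraic closure of a finite field. This is the anchor for $\eudim(0)$, and the recipes $\eudim(\delta)$ are then built \emph{inductively}: one forms $\clD_\delta(K)$ as the set of quasi-divisorial groups whose residue structure fails $\eudim(\delta')$ for all $\delta'<\delta$, takes the closure $\overline{\Inr}_\delta(K)$ of the associated inertia, and tests the induced curve-like condition one level down (Proposition~\ref{natureofk}).

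Second, your ``maximal flag length'' reading of $\dim(K)$ does not work as stated. Flags of generalized quasi prime divisors have length at most $\td Kk$, the same bound as for prime divisors, because each quasi-divisorial step lowers $\td{\,\cdot\,}{\,\cdot\,}$ by exactly~$1$; a single quasi prime divisor can drop $\dim(k\bfv)$ by more than~$1$, so flag length encodes $\td Kk$, not $\dim(K)$. Relatedly, your criterion ``$\eudim(\dim(k))$ holds for $K\bfv|k\bfv$ iff $\bfv$ is divisorial'' fails in the boundary case $\td Kk=\dim(k)+1$: if $\bfv$ is a prime divisor then $\td{K\bfv}{k}=\dim(k)$, so the hypothesis $\td{\,\cdot\,}{\,\cdot\,}>\dim(\,\cdot\,)$ needed for $\eudim$ is not met on the residue. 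The paper's criterion is instead that $\bfv$ is a prime divisor iff the residue structure $\Pi^1_{K\bfv}$ does \emph{not} satisfy $\eudim(\delta')$ for any $\delta'<\dim(k)$; this, together with the curve-like test at depth $d{-}1$, is what makes the induction close.
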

\indent
If $\dim(k)=1$, i.e., $k$ is an algebraic closure of a 
global field, our methods developed here work as well 
for the function fields $K=k(X)$ of projective smooth 
surfaces $X$ with finite (\'etale) fundamental group, 
thus for function fields of ``generic'' surfaces. 
\vskip5pt
An immediate consequence of Theorem~\ref{appl1} 
is a positive answer to a question by Ihara from the 
1980's, which in the 1990's became a conjecture by 
Oda--Matsumoto, for short I/OM, which is about 
giving a topological/combinatorial description of the 
absolute Galois group of the rational numbers; see 
\nmnm{P}{op}~[P6], Introduction, for explanations 
concerning I/OM. The situation we consider here is as follows: Let 
$k_0$ be an arbitrary perfect field, and $k:=\oli k_0$ an 
algebraic closure. Let $X$ be a geometrically integral 
$k_0$-variety, $\clU_X:=\{U_i\}_i$ be a basis of open 
neighborhoods of the generic point $\eta_X$, and 
$\clU_{\oli X}=\{\hhb1\oli U\hhb1\!_i\}_i$ its base change 
to~$k$. Set $\Pi^{\rm c}_{U\!_i}:=\pi^{\rm c}_1(\oli U\!_i)$ and 
$\Pi_{U\!_i}:=\pi^{\ell, \rm ab}_1(\oli U\!_i)$. Then letting
$K:=k(\oli X)$ be the function field of~$\oli X$, it follows 
that $\pic K\to\pia K$ is the projective limit of the 
system $\Pi^{\rm c}_{U\!_i}\to\Pi_{U\!_i}$, 
and there exists a canonical embedding 
$\Aut{^{\rm c}}{\Pi_{\clU_X}}\hra
\Aut{^{\rm c}}{\pia K}$. Finally let 
$\Aut{^{\rm F}_k}K\hra\Aut{^{\rm F}}K$ 
be the group of all the $k$-automorphisms of $K^{\rm i}$, 
respectively all the field automorphisms of $K^{\rm i}\!$,  
modulo Frobenius twists. Note that since $k\subset K^{\rm i}$ 
is the unique maximal algebraically closed subfield in 
$K^{\rm i}$, every $\phi\in\Aut{^{\rm F}}K$ maps $k$ 
isomorphically onto itself, hence $\Aut{^{\rm F}}K$ 
acts on~$k$. Let $k_K\subseteq k_0$ be the corresponding 
fixed field up to Frobenius twists.
\begin{thm} 
\label{appl2}
In the above notations, suppose that 
$\dim(X)>\dim(k_0)+1$. Then 
one has a canonical exact sequence of the form:
$1\to\Aut{^{\rm F}_k}{K}\to\Aut{^{\rm c}}{\hhb1\pia K}
\to\Aut{^{\rm F}_{k_K}}k\to1$.
\vskip2pt
Thus if $\Aut{^{\rm F}_k}{K}=1$ and $k_K=k_0$, then 
$\imath^{\rm c}_K:\Ggr{k_0}\to\Aut{^{\rm c}}{\hhb1\pia K}$
is an isomorphism, hence the pro-$\ell$ abelian-by-central\/ 
{\rm I/OM} holds for~$\clU_X$, and so does the 
classical\/ {\rm I/OM}.
\vskip0pt
$\hhb1$
\end{thm}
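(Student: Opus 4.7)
The plan is to deduce Theorem~\ref{appl2} directly from Theorem~\ref{appl1}~4) applied with $L=K$. Since $K=k(\oli X)$ and $k=\oli k_0$ is algebraic over $k_0$, one has $\td Kk=\dim(X)$ and $\dim(k)=\dim(k_0)$, so the hypothesis $\dim(X)>\dim(k_0)+1$ is precisely $\td Kk>\dim(k)+1$. Theorem~\ref{appl1}~4) then supplies a canonical bijection
\[
\Aut{^{\rm F}}K=\Isom^{\rm F}(K,K)\;\longleftrightarrow\;\Isom^{\rm c}(\pia K,\pia K)=\Aut{^{\rm c}}{\pia K},
\]
and it remains to install a natural exact sequence on $\Aut{^{\rm F}}K$ and to identify $\imath^{\rm c}_K$ as the resulting section.

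On the field side, since $k\subset K^{\rm i}$ is the unique maximal algebraically closed subfield, every $\phi\in\Aut{^{\rm F}}K$ restricts to $\phi|_k\in\Aut{^{\rm F}}k$, giving a homomorphism $\rho\colon\Aut{^{\rm F}}K\to\Aut{^{\rm F}}k$. Its kernel is $\Aut{^{\rm F}_k}K$ by definition, and $\im(\rho)=\Aut{^{\rm F}_{k_K}}k$ by the definition of $k_K\subseteq k_0$ as the fixed field (up to Frobenius twists) of $\im(\rho)$, together with the fact that any $\tau\in\Gal(k/k_K)$ lifts to an $F$-automorphism of $K^{\rm i}$ (choose a transcendence basis $(t_i)$ of $K|k$ on which $\tau$ acts trivially, extend to the perfect closure of $k(t_i)$, and lift algebraically to $K^{\rm i}$, possibly after a Frobenius twist). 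This yields
\[
1\to\Aut{^{\rm F}_k}K\to\Aut{^{\rm F}}K\hor\rho\Aut{^{\rm F}_{k_K}}k\to1,
\]
which, transported across the bijection of Theorem~\ref{appl1}~4), is the exact sequence of Theorem~\ref{appl2}.

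For the specialization $\Aut{^{\rm F}_k}K=1$ and $k_K=k_0$, the kernel is trivial and the quotient is $\Ggr{k_0}=\Aut{^{\rm F}_{k_0}}k$. The map $\imath^{\rm c}_K:\Ggr{k_0}\to\Aut{^{\rm c}}{\pia K}$ is built functorially from the $\Ggr{k_0}$-action on $\oli X=X\times_{k_0}\!k$, and by naturality of Theorem~\ref{appl1}~4) its composition with the surjection $\rho$ is the identity on $\Ggr{k_0}$. Hence $\imath^{\rm c}_K$ is a section of a surjection with trivial kernel, so it is an isomorphism. The canonical embedding $\Aut{^{\rm c}}{\Pi_{\clU_X}}\hra\Aut{^{\rm c}}{\pia K}$ is $\Ggr{k_0}$-equivariant and factors $\imath^{\rm c}_K$, so surjectivity of $\imath^{\rm c}_K$ onto $\Aut{^{\rm c}}{\pia K}$ forces both the embedding and $\Ggr{k_0}\to\Aut{^{\rm c}}{\Pi_{\clU_X}}$ to be isomorphisms: the pro-$\ell$ abelian-by-central I/OM for $\clU_X$. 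The classical I/OM then follows by the standard reduction from abelian-by-central $\pi_1$-information to full $\pi_1$-information.

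The main obstacle I anticipate is the naturality step: one must verify that $\imath^{\rm c}_K$ is literally a section of $\rho$ (not merely up to Frobenius twists on one side and $\ell$-adic units on the other), which amounts to checking that the $\Ggr{k_0}$-action on $\oli X$ lifts, through the reconstruction of Theorem~\ref{appl1}, to a compatible action on $\pia K$ whose restriction to $k$ recovers the given Galois element. This should follow from functoriality of the reconstruction but requires careful bookkeeping at the level of representatives; the surjectivity of $\rho$ in the second paragraph is similar in flavor, for essentially the same reason.
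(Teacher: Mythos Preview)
Your approach is essentially the paper's: set $L|l=K|k$ in Theorem~\ref{appl1},~4) to obtain the bijection $\Aut{^{\rm F}}K\to\Aut{^{\rm c}}{\pia K}$, then transport the ``obvious'' field-theoretic exact sequence $1\to\Aut{^{\rm F}_k}K\to\Aut{^{\rm F}}K\to\Aut{^{\rm F}_{k_K}}k\to1$ across. The paper's proof is literally those two sentences and does not address the surjectivity of $\rho$ or the compatibility of $\imath^{\rm c}_K$ that you flag; you simply supply more detail than the paper does.

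One small correction to your lifting sketch for surjectivity: the phrase ``choose a transcendence basis $(t_i)$ of $K|k$ on which $\tau$ acts trivially'' is ill-formed, since $\tau$ lives on $k$, not on $K$. What actually makes the lifting work in this setup is that $K=k(\oli X)=k_0(X)\otimes_{k_0}k$ is a base change from $k_0$, so $\Gal(k/k_0)$ already acts on $K$ through the second tensor factor; this gives $\Gal(k/k_0)\subseteq\im(\rho)$ and hence $k_K\subseteq k_0$. For a general $\tau\in\Gal(k/k_K)$ with $k_K\subsetneq k_0$ the extension to $K^{\rm i}$ is not automatic by your argument (the extension of $\tau$ to $\overline{k(t_i)}$ need not stabilize $K^{\rm i}$), so if you want to justify surjectivity beyond what the paper asserts, you should argue via the definition of $k_K$ as the fixed field of $\im(\rho)$ together with a closedness argument for $\im(\rho)$ inside $\Aut{^{\rm F}}k$, rather than by an explicit lift.
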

We note that Theorem~\ref{appl2} is an immediate 
consequence of Theorem~\ref{appl1}: Setting namely
$L|l=K|k$, 
Theorem~\ref{appl1} implies that the canonical 
map $\Aut{^{\rm F}}K\to\Aut{^{\rm c}}{\pia K}$ is 
an isomorphism of groups. Further, since $k\subset K^{\rm i}$ 
is the unique maximal algebraically closed subfield of 
$K^{\rm i}$, every $\phi\in\Aut{^{\rm F}}{K}$ 
maps $k$ isomorphically onto itself. Conclude by 
noticing that one has an obvious exact sequence of groups
\vskip7pt
\centerline{$1\to \Aut{^{\rm F}_k}K\to
\Aut{^{\rm F}}K\to\Aut{^{\rm F}_{k_K}}k\to1$.}
\vskip7pt
{\bf Thanks:} I would like to thank all who showed 
interested in this work, among whom: Ching-Li Chai, 
Pierre Deligne, Peter Jossen, Frans Oort, Richard Pink, Jakob 
Stix, Tam\'as Szamuely, Michael Temkin, and Adam 
Topaz for clarifying technical aspects, and Viktor Abrashkin, 
Mihnyong Kim, Hiroaki Nakamura, Mohamed Saidi 
and Akio Tamagawa for discussion on several occasions
concerning earlier versions and ideas of the manuscript. 
\section{Proof of Theorem~\ref{mainthm}}
\noindent
A) {\it On the strategy of the proof\/}
\vskip5pt
First recall the strategy to tackle Bogomolov's program
as explained in~[P3]. Namely, after choosing a throughout 
fixed identification $\imath:\lvT_{{\lvG}_m,\hhb1k}
=\lvZ_\ell(1)\to\Zell$,
one gets via Kummer theory an isomorphism,which is
canonical up to the choice of the identification $\imath$,
as follows:
\[
\whK:=\plim{e}\,\,K\tms/\ell^e\to
    \plim{e}\,\,{\rm Hom}(\pia K,\lvZ/\ell^e)={\rm Hom}(\pia K,\Zell)
\]

Let $\jmath_K:K\tms\to\whK$ be the completion
functor. Since $k\tms$ is divisible, and $K\tms/k\tms$ is
a free abelian group, it follows that $\ker(\jmath_K)=k\tms$,
and $\jmath_K(K\tms)=K\tms/k\tms$ is $\ell$-adically 
dense in $\whK$. We identify $\jmath_K(K)\subset\whK$ 
with $\clP(K):=K\tms/k\tms$ and interpret it as the 
projectivization of the (infinite dimensional) $k$-vector space 
$(K,+)$. The 1-dimensional projective subspaces of 
$\clP(K)$ are called \defi{collineations} in $\clP(K)$,
and notice that the collineations in $\clP(K)$ are of the 
form $\eul_{x,y}:=(k x+k y)\tms/k\tms$, where 
$x,y\in K\tms$ are linearly independent over $k$. 
\vskip2pt
Using the {\it Fundamental Theorem of Projective Geometries\/}
FTPG, see e.g.~\nmnm{A}{rtin}~[Ar], it follows that one can 
recover $(K,+)$ from $\clP(K)$ endowed with all the collineations 
$\eul_{x,y}$, $x,y\in K$. Moreover, the atomorphisms of 
$\clP(K)$ which respect all the collineations are semi-linear. Using
this fact one shows that the multiplication on $\clP(K)$ induced
by the group structure on $\whK={\rm Hom}(\pia K,\lvZ)$ 
is distributive w.r.t.\ the addition 
of the group $(K,+)$ recovered via the FTPG. Thus knowing 
$\clP(K)\subset\whK$ as a subgroup together with all the 
collineations in $\clP(K)$ allows one to finally to recover 
the function field $K|k$. Furthermore, for an automorphism 
$\hat\phi:\whK\to\whK$ the following are equivalent:
\vskip2pt
\begin{itemize}[leftmargin=30pt]
\item[i)] $\hat\phi$ is the $\ell$-adic completion of an 
automorphism $\phi\in{\rm Aut}^{\rm F}(K)$.
\vskip2pt
\item[ii)] $\hat\phi\big(\clP(K)\big)=\clP(K)$ and $\hat\phi$ maps
the set of collineations onto itself.
\end{itemize}
\vskip2pt
This leads to the following strategy to tackle Bogomolov's 
program, see~[P3], Introduction, for more details and how 
ideas evolved in this context. 
\vskip5pt
\defi{\textit{Give group theoretical recipes which are invariant 
under isomorphisms of profinite groups and recover$\hp/$reconstruct 
from $\pic K$, viewed as abstract profinite group, the following:}} 
\begin{itemize}[leftmargin=25pt]
\item[-] \defi{\textit{The subset $\clP(K)=\jmath_K(K\tms)\subset
   \whK={\rm Hom}(\pia K,\Zell)$.}}
\vskip3pt
\item[-] \defi{\textit{The collineations $\eul_{x,y}\subset\clP(K)$ 
for all $k$-linearly independent $x,y\in K$.}}
\end{itemize}
\vskip3pt
\noindent
We will give such group theoretical recipes which work
under the hypothesis of Theorem~\ref{mainthm}, that is,
recover $\clP(K)$ together with the collineations $\eul_{x,y}$
from $\pic K$ endowed with $\Inrdiv(K)$. 
\vskip5pt
\begin{itemize}[leftmargin=52pt]
\item[Step 1.] Given $\pic K$, thus $\pic K\to\pia K$, 
endowed with $\Inrdiv(K)\subset\pia K$, recover the 
\defi{total decomposition graph} $\clG_{\tottrK}$ for $K|k$.
\vskip2pt
\item[-] This will be accomplished in Subsection~B),
see Proposition~\ref{totaldecgraph}.
\vskip5pt
\item[Step 2.] Given the total decomposition graph 
$\clG_{\tottrK}$  for $K|k$, reconstruct a {\it first 
approximation\/} $\lxzl K\subset\whK$~of $\,\clP(K)$ 
inside $\whK$, called the \defi{(canonical) divisorial 
$\whU\!_K$-lattice}~of~$K|k$. 
\vskip2pt
\item[-] This will be accomplished in Subsection~C), see 
Proposition~\ref{propSubsecC}.
\vskip5pt
\item[Step 3.] Given $\pic K$ and $\lxzl K\subset\whK$, 
reconstruct a {\it better approximation\/} 
$\lxzl K^0\subset\lxzl K$~of~$\,\clP(K)$
inside~$\lxzl K$, called the~\defi{quasi arithmetical 
$\whU\!_K$-lattice} in $\whK$, satisfying the following: 
$\clP(K)\subseteq\lxzl K^0$~and 
$\lxzl K^0/\big(\whU\!_K\cdot\clP(K)\big)$ is torsion.
\vskip2pt
\item[-] This will be accomplished in Subsection~D), see 
Proposition~\ref{specres2}.
\vskip5pt
\item[Step 4.] Given $\clG_{\tottrK}$ endowed with $\lxzl K^0$, 
for all $x\in K$ with $k(x)\subset K$ relatively algebraically 
closed, recover the \defi{(rational) projection} 
$pr_{k(x)}:\pia K\to\pia{k(x)}$.
\vskip2pt
\item[-] This will be accomplished in Subsection~E), see
Proposition~\ref{charratquot}.
\end{itemize}
\vskip2pt
Finally, given $\clG_{\tottrK}$ endowed with $\lxzl K$
and all the rational projections $\pia K\to\pia{k(x)}$ as above,
conclude by applying the Main Theorem from 
\nmnm{P}{op}~[P3], Introduction.
\vskip5pt
\noindent

\vskip10pt
\noindent
B) {\it Generalities about decomposition graphs\/}
\vskip5pt
Let $k$ be an algebraically closed field with
${\rm char}(k)\neq\ell$, and $K|k$ be a function field
with $d:=\td K k >1$. We begin by recalling briefly basics 
about the (quasi) prime divisors of the function field
$K|k$, see \nmnm{P}{op}~[P1], Section 3, for more details. 

\vskip2pt
A \defi{flag of generalized prime divisors} of $K|k$ 
is a chain of $k$-valuations $\tlv_1\leq\dots\leq\tlv_r$ 
of $K$ such that $\tlv_1$ is a prime divisor of $K|k$
and inductively, $\tlv_{i+1}/\tlv_i$ is a prime divisor
of the function field $K\tlv_i|\hhb1k$. In particular, 
$r\leq\td K k$, and we also say that $\tlv_r$ is a 
prime $r$-divisor of $K|k$. By abuse of language, we
will say that the trivial valuation is the prime $0$-divisor
of $K|k$. A \defi{flag of generalized quasi prime divisors} 
$\tlbfv_1\leq\dots\leq\tlbfv_r$ of $K|k$ is defined in a
similar way, but replacing \defi{prime} by \defi{quasi
prime}. In particular, $\tlbfv_r$ will also be called a
quasi prime $r$-divisor, or a generalized quasi prime 
divisors of $K|k$ if $r$ is irrelevant for the context.
Note that the prime $r$-divisors of $K|k$ are precisely the 
quasi prime $r$-divisors of $K|k$ which are trivial~on~$k$. 
\vskip5pt
The \defi{total prime divisor graph} $\tottrK$
of $K|k$ is the half-oriented graph defined as follows:
\vskip2pt
\begin{itemize}[leftmargin=30pt]
\item[{a)}] Vertices: The vertices of $\tottrK$ are the 
residue fields $\Ktlv$ of all the generalized prime 
divisors $\tlv$ of $K|k$ viewed as distinct function 
fields.
\vskip2pt
\item[{b)}] Edges: If $\tlv=\tlw$, the trivial valuation 
$\tlv/\tlw=\tlw/\tlv$ is the only edge from $\Ktlv=\Ktlw$
 to itself, and it by definition a non-oriented edge. If 
 $\tlv\neq\tlw$, and $\tlw/\tlv$ is a prime divisor of 
 $\Ktlv$, then $w:=\tlw/\tlv$ is the only edge from $\Ktlv$ 
 to $\Ktlw$, and by definition,~oriented. Otherwise there
 are no edges from $\Ktlv$ to $\Ktlw$. 
\end{itemize}

The \defi{total quasi prime divisor graph} $\qtottrK$ of $K|k$ 
is defined in a totally parallel way, but considering as vertices 
all the generalized quasi prime divisors. 
\vskip2pt
Notice that $\tottrK\subset\qtottrK$ 
is a full subgraph, and that the following functoriality holds:
\vskip3pt
\begin{itemize}[leftmargin=25pt]
\item[1)] {\it Embeddings.\/} Let $L|l\hra K|k$ be an embedding 
of function fields which maps $l$ isomorphically onto $k$. 
Then the canonical restriction map of valuations
$\Val K\to\Val L$, $v\mapsto v|_L$, gives rise to a surjective 
morphism of the total (quasi) prime divisor graphs 
$\varphi_\imath:\tottrK\to\tottrL$ and $\varphi_\imath:\qtottrK\to\qtottrL$.
\vskip5pt
\item[2)] {\it Restrictions.\/} Given a generalized prime 
divisor $\tlv$ of $K|k$, let $\clD^{\rm tot}_{\tlv}$ 
be the set of all the generalized prime divisors 
$\tlbfw$ of $K|k$ with $\tlv\leq\tlw$. Then the map 
$\clD_\tlv^{\rm tot}\to\Val\Ktlv$, $\tlw\mapsto\tlw/\tlv$, 
is an isomorphism of $\clD_\tlv^{\rm tot}$ onto the total 
prime divisor graph of $\Ktlv\hhb1|\hhb1k$. Similarly, 
the corresponding assertion for generalized quasi prime 
divisors holds as well.
\end{itemize} 
\vskip5pt
\noindent 
$\bullet$ \defi{Decomposition graphs} 
[See \nmnm{P}{op}~[P3], Section 3, for more details.] 
\vskip2pt
Let $K|k$ be as above. For every valuation $v$ of $K$, 
let $1+\eum_v=:U^1_v\subset U_v:=\clO_v^\times$ be
the principal $v$-units, respectively the $v$-units in $K\tms$. 
By \nmnm{P}{op}~[P1] and \nmnm{T}{opaz}~[To1], it
follows that the decomposition field $K^\ZZ_v$ of $v$ is 
contained in $K^{\ZZ^1}:=K[\!\root{\ell^\infty}\of{U_v^1}\,]$,
and the inertia field $K^\TT_v$ of $v$ is contained in 
$K^{\TT^1}:=K[\!\root{\ell^\infty}\of{U_v}\,]$. We denote 
$\mT_v:={\rm Gal}(K'|K^{\TT^1})\subseteq T_v$ and 
$\mZ_v:={\rm Gal}(K'|K^{\ZZ^1})\subseteq Z_v$ and 
call $T^1\subseteq Z^1$ the \defi{minimized inertia/decomposition} 
groups of $v$. Recalling that $\Kv^\times=U_v/U_v^1$, by
Kummer theory one gets that
\[
\Pi_\Kv^1:=\mZ_v/\mT=
   {\rm Hom}^{\rm cont}\big(U_v/U_v^1,\lvZ_\ell(1)\big)=
     {\rm Hom}^{\rm cont}\big(\Kv^\times\!,\lvZ_\ell(1)\big).
\]
By abuse of language, we say that $\Pi_\Kv^1$ is the 
\defi{minimized residue Galois group} at $v$. We 
notice that $K^\ZZ=K^{\ZZ^1}$, $K^{\TT^1}=K^\TT$,
$\Pi_\Kv^1=\Pi_\Kv$, provided $\chr(\Kv)\neq\ell$. On
the other hand, if $\chr(\Kv)=\ell$, then one must have
$\chr(k)=0$, and in this case $T_v^1\subseteq Z_v^1\subseteq T_v$,
hence the residue field of $K^{\ZZ^1}\!$ contains $(\Kv)'$,
thus $\Pi_\Kv^1\subseteq T_v/T_v^1$ has trivial image 
in $\Pi_\Kv=Z_v/T_v$.
\vskip5pt
Anyway, for generalized prime divisors $\tlv$ of 
$K$, one has: $T_\tlv=T_\tlv^1$, $Z_\tlv=Z_\tlv^1$, 
$\Pi_\Ktlv^1=\Pi_\Ktlv$.
\vskip5pt
Recall that a generalized quasi prime divisors $\tlbfv$ 
is a quasi prime $r$-divisor iff $\mT_\tlbfvm\cong\Zell^r$. 
Moreover, any of the equalities 
$K_\tlbfvm^{\ZZ^1}\!=K_\tlbfvm^\ZZ$, 
$K_\tlbfvm^{\TT^1}\!=K_\tlbfvm^\TT$, 
$\Pi_{K\tlbfvm}^1=\Pi_{K\tlbfvm}$, is equivalent to 
$\chr(\Ktlbfv)\neq\ell$. Further, for generalized quasi 
prime divisors $\tlbfv_1$ and $\tlbfv_2$ one has:
$\mZ_{\tlbfvm_1}\cap \mZ_{\tlbfvm_2}\neq1$ iff 
$\,\mT_{\tlbfvm_1}\cap \mT_{\tlbfvm_2}\neq1$, and
if $\,\mT_{\tlbfvm_1}\cap \mT_{\tlbfvm_2}\neq1$, 
there exists a unique generalized quasi prime divisor 
$\tlbfv$ of $K|k$ with $\mT_\tlbfvm= \mT_{\tlbfvm_1}
\cap \mT_{\tlbfvm_2}$. And $\tlbfv$ is also the 
unique generalized quasi prime divisor of $K|k$ 
maximal with the property $\mZ_{\tlbfvm_1}, 
\mZ_{\tlbfvm_2}\subseteq\mZ_\tlbfvm$. Finally, $\tlbfv$ 
is trivial on $k$, provided $\min(\tlbfv_1,\tlbfv_2)$ 
is so.
\vskip5pt
In particular, for generalized (quasi) prime divisors 
$\tlbfv$ and $\tlbfw$ of $K|k$ one has: $\tlbfv=\tlbfw$ 
iff $\mT_\tlbfvm=\mT_\tlbfwm$ iff $\mZ_\tlbfvm=\mZ_\tlbfwm$.
Further, $\tlbfv<\tlbfw$ iff $\mT_\tlbfvm\subset\mT_\tlbfwm$ 
strictly iff $\mZ_\tlbfvm\supset\mZ_\tlbfwm$ strictly. And 
if $\tlbfv<\tlbfw$ is are a (quasi) prime $r$-divisor, 
respectively a (quasi) prime $s$-divisor,
then $\mT_\tlbfwm/\mT_\tlbfvm\cong\Zell^{s-r}$.
\vskip2pt
We conclude that the partial ordering on the set of all 
the generalized (quasi) prime divisors $\tlbfv$ of $K|k$ 
{\it is encoded in the set\/} of their minimized 
inertia/decomposition groups $\mT_\tlbfvm\subseteq\mZ_\tlbfvm$. 
In particular, the existence of the trivial, respectively 
nontrivial, edge from $\Ktlbfv$ to $\Ktlbfw$ in $\qtottrK$ 
(and/or $\tottrK$) is equivalent to $\mT_\tlbfvm=\mT_\tlbfwm$, 
respectively to $\mT_\tlbfvm\subset\mT_\tlbfwm$ and 
$\mT_\tlbfwm/\mT_\tlbfvm\cong\Zell$.
\vskip3pt
Via the Galois correspondence and the functorial 
properties of the Hilbert decomposition theory for 
valuations, we attach to the total prime divisor graph 
$\tottrK$ of $K|k$ a graph $\clG_{\tottrK}$ whose 
vertices and edges are in bijection with those of $\tottrK$, 
as follows:
\vskip2pt
\begin{itemize}[leftmargin=30pt]
\item[{a)}] The vertices of $\clG_{\tottrK}$ are the 
pro-$\ell$ groups $\pia{\Ktlv}$, viewed as distinct 
pro-$\ell$ groups, where $\tlv$ are all the generaalized 
prime divisors of $K|k$.
\vskip2pt
\item[{b)}] If an edge from $\Ktlv$ to $\Ktlw$ exists, 
the corresponding edge from $\pia{\Ktlv}$ to 
$\pia{\Ktlw}$ is endowed with the pair of groups 
$T_{\tlw\!/\!\tlv}=T_\tlw/T_\tlv\subseteq 
Z_\tlw/T_\tlv=Z_{\tlw\!/\!\tlv}$, viewed as subgroups 
of the residue Galois group $\pia{\Ktlv}=Z_\tlw/T_\tlv$, 
and notice that in this case 
$\pia{\Ktlw}=Z_{\tlw\!/\!\tlv}/T_{\tlw\!/\!\tlv}$.
\end{itemize}
The graph $\clG_{\tottrK}$ will be called the \defi{total
decomposition graph} of $K|k$, or of $\pia K$.
\vskip5pt
In a similar way, we attach to $\qtottrK$ the \defi{total
quasi decomposition graph} $\clG^1_{\qtottrK}$ of 
$K|k$, but using the minimized 
inertia/decomposition/residue groups $\mT_\tlbfvm$,
$\mZ_\tlbfvm$, $\Pi_{K\tlbfvm}^1$ instead of the 
inertia/decomposition/residue Galois groups (which are 
the same for generalized prime divisors, because 
$\chr(k)\neq\ell\,$). Clearly, $\clG_{\tottrK}$ is a 
full subgraph of  $\clG^1_{\qtottrK}$.
\vskip5pt
The functorial properties of the total graphs of (quasi) prime 
divisors translate in the following functorial properties of 
the total (quasi) decomposition graphs:
\vskip3pt
\begin{itemize}[leftmargin=25pt]
\item[1)] {\it Embeddings.\/} Let $\imath: L|l\hra K|k$ be an
embedding of function fields which maps $l$ isomorphically
onto $k$. Then the canonical projection homomorphism 
$\Phi_\imath:\pia K\to\pia L$ is an open homomorphism,
and moreover, for every generalized (quasi) prime divisor 
$\bfv$ of $K|k$ and its restriction $\bfvL$ to $L$ one has:
$\Phi_\imath(\mZ_\bfv)\subseteq\mZ_{\bfvL}$ is an open 
subgroup, and $\Phi_\imath(\mT_\bfv)\subseteq\mT_{\bfvL}$
satisfies: $\Phi_\imath(\mT_\bfv)=1$ iff $\bfvL$ has 
divisible value group, e.g., $\bfvL$ is the trivial valuation. 
Therefore, $\Phi_\imath$ gives rise to \defi{morphisms} 
of total (quasi) decomposition graphs
\vskip5pt
\centerline{$\Phi_\imath:\clG_{\tottrK}\to\clG_{\tottrL}$,
\quad $\Phi_\imath:\clG^1_{\qtottrK}\to\clG^1_{\qtottrL}$.}
\vskip5pt
\item[2)] {\it Restrictions.\/} Given a generalized 
(quasi) prime divisor $\bfv$ of $K|k$, let 
$pr_\bfv:\mZ_\bfv\to\Pi_{\Kbfv}^1$ be the canonical 
projection. Then for every $\bfw\geq\bfv$ we have:
$\mT_\bfw\subseteq\mZ_\bfw$ are mapped onto 
$\mT_{\bfw\!/\!\bfv}:=\mT_\bfw/\mT_\bfv\subseteq
\mZ_\bfw/\mT_\bfv=:\mZ_{\bfw\!/\!\bfv}$. 
Therefore, the total (quasi) decomposition graph for 
$\Kbfv\hhb1|\hhb1k\bfv$ can be recovered from 
the one for $K|k$ in a canonical way via 
$pr_\bfv:\mZ_\bfv\to\Pi_{\Kbfv}^1$.
\end{itemize}
\begin{remark}
\label{remSubsecA}
By the discussion above, the following hold:
\vskip2pt
\itm{25}{
\item[1)] Reconstructing $\clG_{\tottrK}$, respectively
$\clG^1_{\qtottrK}$, is equivalent to describing 
the set of all the generalized divisorial groups 
$T_{\tlv}\subset Z_{\tlv}$ in $\Pi_K$, 
respectively describing the set of all the 
generalized (minimized) quasi divisorial groups 
$\mT_{\tlbfvm}\subset\mZ_{\tlbfvm}$ 
in $\Pi_K$.
\vskip2pt
\item[2)] Given a generalized prime divisor $\tlv$ of 
$K|k$, one can recover $\clG_{\clD_\Kv^{\rm tot}}$ 
from $\clG_{\tottrK}$ together with $Z_\tlv\to\Pi_{\Ktlv}$.
Correspondingly, given a generalized quasi prime 
divisor $\tlbfv$ of $K|k$, one can recover 
$\clG^1_{\clQ_{\Ktlbfvm}^{\rm tot}}$ from 
$\clG^1_{\qtottrK}$ together with 
$\mZ_\tlbfvm\to\Pi^1_{K\tlbfvm}$.
}
\end{remark}

For later use we notice the following: Let $\tlbfv$ 
be a generalized quasi prime divisor.  Recall 
that for every generalized quasi prime divisor 
$\tlbfw$ one has: $\mT_{\tlbfwm}\supset\mT_{\tlbfvm}$
strictly iff $\mZ_{\tlbfwm}\subset\mZ_{\tlbfvm}$ 
strictly iff $\tlbfw>\tlbfv$. If so, then 
$\mT_{\tlbfvm}\subset\mT_{\tlbfwm}\subset\mZ_\tlbfvm$, 
and we consider the closed subgroup:
\[
T^1_{\Ktlbfvm}\subset\Pi^1_{K\tlbfvm} \ \hbox{\ generated
by } \ \mT_{\tlbfwm}/\mT_{\tlbfvm}\subset
\mZ_\tlbfvm/\mT_\tlbfvm\subset\Pi^1_{\Ktlbfvm}
\ \hbox{ for all } \ \tlbfw>\tlbfv.
\]
This being said, one has the following group
theoretical criterion to check that
$\chr(k\bfv)\neq\ell$.
\begin{prop}
\label{rescharneqell}
Let $\bfv$ be a $($generalized$\,)$ quasi prime 
divisor such that $k\bfv$ is an algebraic closure 
of a finite field and $\td{\Kbfv}{k\bfv}>1$. Then 
the following are equivalent:
\vskip2pt
\itm{25}{
\item[{\rm i)}] $\chr(k\bfv)\neq\ell$.
\vskip2pt
\item[{\rm ii)}] $\Pi^1_{\Ktlbfvm}/T^1_{\Ktlbfvm}$ 
is a finite $\lvZ_\ell$-module of even rank for all 
quasi prime $(d-1)$-divisors $\tlbfv>\bfv$.
}
\end{prop}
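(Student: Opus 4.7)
The plan is to compute $\Pi^1_{\Ktlbfvm}/T^1_{\Ktlbfvm}$ explicitly via Kummer theory for each quasi prime $(d-1)$-divisor $\tlbfv>\bfv$, and then to distinguish the two characteristic cases by the structure of the $\ell$-primary torsion of the Jacobian of the corresponding residue curve. For such $\tlbfv$ the hypotheses force $\td{\Ktlbfvm}{k\tlbfvm}=1$ and $k\tlbfvm=k\bfv=\oli\lvF_p$ with $p:=\chr(k\bfv)$; thus $F:=\Ktlbfvm=k\bfv(C_\tlbfv)$ is the function field of a smooth projective curve $C_\tlbfv$ over $k\bfv$ of some genus $g=g(\tlbfv)$, and the quasi prime refinements $\tlbfw>\tlbfv$ are exactly the closed points of $C_\tlbfv$.

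Using the Kummer-theoretic identification recalled above, one has $\Pi^1_F=\Hom(F\tms,\lvZ_\ell(1))$, and for each closed point $\mathfrak{p}\in C_\tlbfv$ the corresponding minimized inertia is $\Hom(F\tms/\Oi_\mathfrak{p}\tms,\lvZ_\ell(1))\cong\lvZ_\ell(1)$, sitting in $\Pi^1_F$ as the component indexed by $\mathfrak{p}$. Applying $\Hom(-,\lvZ_\ell(1))$ to the two exact sequences
\[
1\to k\bfv\tms\to F\tms\to\mathrm{PDiv}(C_\tlbfv)\to 0, \qquad 0\to\mathrm{PDiv}(C_\tlbfv)\to\mathrm{Div}(C_\tlbfv)\to\Pic(C_\tlbfv)\to 0,
\]
using $\Hom(k\bfv\tms,\lvZ_\ell(1))=0$ (torsion into torsion-free) and the decomposition $\Pic(C_\tlbfv)=\lvZ\oplus J_\tlbfv(k\bfv)$ for the Jacobian $J_\tlbfv$, the long exact $\mathrm{Ext}$-sequence yields the key identification
\[
\Pi^1_F/T^1_F\;\cong\;\mathrm{Ext}^1\bigl(J_\tlbfv(k\bfv),\lvZ_\ell(1)\bigr)\;\cong\;\Hom\bigl(J_\tlbfv(k\bfv),\mu_{\ell^\infty}\bigr).
\]

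Now the abelian group $J_\tlbfv(k\bfv)[\ell^\infty]$ is isomorphic to $(\lvQ_\ell/\lvZ_\ell)^{2g}$ when $p\neq\ell$ and to $(\lvQ_\ell/\lvZ_\ell)^{\gamma(C_\tlbfv)}$, with $0\le\gamma(C_\tlbfv)\le g$ the Hasse--Witt invariant of $C_\tlbfv$, when $p=\ell$; hence $\Pi^1_F/T^1_F$ is a free $\lvZ_\ell$-module of rank $2g$ (always even) in the first case and of rank $\gamma(C_\tlbfv)$ (possibly odd) in the second. This gives (i)$\Rightarrow$(ii) at once, since $2g(\tlbfv)$ is finite and even for every $\tlbfv$. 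For (ii)$\Rightarrow$(i) I would argue contrapositively: if $p=\ell$, it suffices to exhibit a single quasi prime $(d-1)$-divisor $\tlbfv>\bfv$ whose residue curve has odd Hasse--Witt --- for instance an ordinary elliptic curve ($g=\gamma=1$) --- which would force (ii) to fail. I expect this realization step to be the main obstacle: one must guarantee the existence, among the $(d-1)$-divisors refining $\bfv$, of one whose residue curve is (birationally) an ordinary elliptic curve over $\oli\lvF_\ell$. The hypothesis $\td{\Kbfv}{k\bfv}\ge 2$ enters here precisely to provide enough geometric room on a normal projective model of $\Kbfv|k\bfv$ to construct such a flag of divisorial valuations, using Bertini-type arguments together with the Zariski-openness and non-emptiness of the ordinary locus in suitable families of curves over $\oli\lvF_\ell$.
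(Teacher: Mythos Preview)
Your argument tracks the paper's proof closely. The identification
\[
\Pi^1_{\Ktlbfvm}/T^1_{\Ktlbfvm}\;\cong\;T_\ell\bigl(\Pic^0(C_\tlbfvm)\bigr)
\]
is exactly what the paper establishes, only the paper obtains it by directly dualizing the divisor map $\dvv:\Ktlbfvm^\times\to\Div^0(C_\tlbfvm)$ rather than via your $\mathrm{Ext}$-sequence; the two computations are equivalent. The implication i)$\,\Rightarrow\,$ii) is then identical in both.

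For ii)$\,\Rightarrow\,$i) you have correctly located the crux: one must produce, in characteristic $\ell$, a quasi prime $(d-1)$-divisor $\tlbfv>\bfv$ whose residue curve has \emph{odd} $p$-rank. You propose to realize specifically an ordinary elliptic curve via Bertini and openness of the ordinary locus. The paper does not attempt this; instead it invokes as a black box a result communicated by Chai--Oort: on \emph{any} smooth $\kappa$-variety $X$ with $\chr(\kappa)=\ell$, there exists a one-dimensional point $x^1$ whose associated projective curve $C_{x^1}$ has odd $\ell$-rank of $T_\ell\Pic^0(C_{x^1})$. Once such an $x^1$ is in hand, the paper builds the required $\tlbfv$ by completing $\clO_{x^1}$ and choosing a flag of regular parameters, which is routine.

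Your Bertini sketch is plausible but not obviously sufficient as stated: a general linear section of a model of $\Kbfv$ need not have odd $p$-rank (already for $\lvP^e$ all linear slices are rational), and pulling back an ordinary plane cubic along a dominant rational map to $\lvP^2$ yields a cover of that cubic whose $p$-rank is not controlled in an elementary way. So the realization step genuinely requires either the Chai--Oort input or a comparably substantial argument; it is worth flagging this as the one non-formal ingredient rather than leaving it to ``Bertini-type arguments''.
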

\begin{proof} To simplify notations, set
$\kappa:=k\bfv$. Since $\kappa$ is an algebraic 
closure of a finite field, it has only the trivial 
valuation, hence $k\tlbfv=\kappa$ for all quasi
prime divisors $\tlbfv\geqslant\bfv$. Further, 
$\tlbfv\geqslant\bfv$ is a quasi prime 
$(d-1)$-divisor iff $\td{\Ktlbfv}\kappa=1$ iff
$\Ktlbfv\hhb1|\hhb1\kappa$ is the function field
of a projective smooth $\kappa$-curve $X_\tlbfvm$.
If so, let $\dvv:\Ktlbfv^\times\to\Div^0(X_\tlbfvm)
\subset\Div(X_\tlbfvm)$ is the divisor map. Then 
by mere definitions, it follows that
$T^1_{\Ktlbfvm}\to\Pi^1_{\Ktlbfvm}$ is the $\ell$-adic 
dual of $\dvv(\Ktlbfv^\times)\to\Div^0(X_\tlbfvm)$, 
and in particular, $\Pi^1_{\Ktlbfvm}/T^1_{\Ktlbfvm}$
is the $\ell$-adic dual of 
$\Div^0(X_\tlbfvm)/\dvv(\Ktlbfv^\times)=\Pic^0(X_\tlbfvm)$.
Therefore, $\Pi^1_{\Ktlbfvm}/T^1_{\Ktlbfvm}$ is 
isomorphic to the Tate $\ell$-module of $\Pic^0(X_\tlbfvm)$. 
\vskip5pt
i) $\Rightarrow$ ii): Let $\tlbfv$ be an arbitrary
quasi prime $(d-1)$-divisor. In the above notations,
$X_\tlbfvm$ is a projective smooth curve over 
the algebraically closed field $\kappa$. Let
$g_\tlbfvm$ be the genus of $X_\tlbfvm$. Since
$\chr(\kappa)\neq\ell$, it follows that the 
$\lvZ_\ell$-rank of $\Pic^0(X_\tlbfvm)$ is 
$2g_\tlbfvm$, thus even.
\vskip3pt
ii) $\Rightarrow$ i): Equivalently, we have to
prove that if $\chr(\kappa)=\ell$, then there exist
quasi prime $(d-1)$-divisors $\tlbfv>\bfv$ such
that $\lvZ_\ell$-rank of the Tate $\ell$-module of 
$\Pic^0(X_\tlbfvm)$ has odd rank. To proceed, 
let $\bfv$ have rank $r$, hence by hypothesis, 
$e:=d-r-1=\td{\Kbfv}\kappa-1>1$. Let $X$ 
be a smooth (not necessarily proper) $\kappa$-model 
of the function field $\Kbfv\hhb1|\hhb1\kappa$, 
and recall the the following fact ---a proof of 
which was communicated to me by 
\nmnm{Ch}{ai}--\nmnm{O}{ort}~\cite{Ch-O}:
\begin{fact} In the above notation, let
$X^1\subset X$ be the points with $\dim(x^1)=1$,
and $C_{x^1}$ be the unique projective smooth 
curve with $\kappa(C_{x^1})=\kappa(x^1)$. 
Then $\chr(\kappa)=\ell$ iff there exist  
$x^1\in X^1$ such that the $\lvZ_\ell$-rank 
of the Tate $\ell$-module of $\Pic^0(C_{x^1})$ 
is odd.
\end{fact}  
\noindent
Now since $\chr(\kappa)=\ell$, by the Fact above, 
there exists a point $x^1\in X$ such that the 
$\lvZ_\ell$-rank of the Tate $\ell$-module of 
$\Pic^0(C_{x^1})$ is odd. By mere definitions
one has $\kappa(C_{x^1})=\kappa(x^1)$, and since 
$\kappa$ is algebraically closed, one has: First, 
$\kappa(x^1)\hhb1|\hhb1\kappa$ is separably 
generated, and second, since $x^1$ is smooth, 
the local ring $\clO_{x^1}$ is regular. Further, 
recalling that $\td{\Kbfv}\kappa-1=e>1$, 
one has that 
$\dim(\clO_{x^1})=\td{\Kbfv_0}\kappa-1=e>0$. 
Hence the completion of the local 
ring $\clO_{x^1}$ is the power series ring 
$\widehat\clO_{x^1}=\kappa(x^1)[[t_1,\dots,t_e]]$, 
where $(t_1,\dots,t_e)$ is any regular system of 
parameters at $x^1$. In particular, there exist
``many'' prime $e$-divisors $\tlbfw$ of 
$\Kbfv$ such that $(\Kbfv)\tlbfw=\kappa(x^1)$.
Hence setting $\tlbfv:=\tlbfw\circ\bfv$, it
follows that $\Ktlbfv=\kappa(x^1)$, thus 
$\tlbfv>\bfv$ is a quasi prime $(d-1)$-divisor
of $K|k$, having $k\tlbfv=\kappa$. Further, 
$X_\tlbfvm:=C_{x^1}$ is the projective smooth 
model of $\Ktlbfv\hhb1|\hhb1\kappa$. 
Since the $\lvZ_\ell$-rank of 
$\Pic^0(X_\tlbfvm)=\Pic^0(X_{x^1})$
is odd by the choice of $x^1$, the quasi prime 
$(d-1)$-divisor $\tlbfv$ does the job.
\end{proof}  
We conclude this subsection by the following 
Proposition, which relates the generalized prime
divisors to the space of divisorial inertia.
\begin{prop}
\label{totaldecgraph}
Let $\Inrtm_k(K)\subset\pia K$ be the topological 
closure of $\Inrdiv(K)$ in $\pia K$. Further, for
closed subgroups $\Delta\subset\pia K$, let 
$\Delta''\subset\pic K$ be their preimages under the
canonical projection $\pic K\to\pia K$. Then $\td Kk$
is the maximal integer $d$ such that there exist 
subgroups $\Delta\cong\lvZ_\ell^d$ of $\,\pia K$ 
with $\Delta''\subset\pic K$ abelian. Further, the
following hold:
\vskip2pt
\begin{itemize}[leftmargin=25pt]
\item[{\rm1)}] For
sequences of subgroups $Z_1\supset\dots\supset Z_r$,
$T_1\subset\dots\subset T_r$ with $T_m\subseteq Z_m$,
$m=1,\dots,r$, the following are equivalent:
\vskip2pt
\begin{itemize}[leftmargin=25pt]
\item[{\rm i)}] There exist prime $m$-divisors $\tlv_m$
such that $T_m=T_{\tlv_m}\subset Z_{\tlv_m}=Z_m$ for 
$m=1,\dots,r$.
\vskip2pt
\item[{\rm ii)}] $Z_1\supset\dots\supset Z_r$,
$T_1\subset\dots\subset T_r$ are maximal w.r.t.\ 
inclusion satisfying:
\vskip2pt
\begin{itemize}[leftmargin=10pt]
\item[-] $Z_r$ contains subgroups $\Delta\cong\lvZ_\ell^{\td Kk}$
with $\Delta''\subset\pic K$ abelian.
\vskip2pt
\item[-] $T_r\subset\Inrtm_k(K)$, $T_m\cong \lvZ_\ell^m$ 
and $T''_m\subset Z''_m$ is the center of $Z''_m$ for
$m=1,\dots,r$.
\end{itemize}
\end{itemize}
\vskip4pt
\item[{\rm2)}] In particular, the total decomposition graph
$\clG_{\tottrK}$ for $K|k$ can be reconstructed from
$\pic K\to\pia K$ endowed with divisorial inertia 
$\Inrdiv(K)\subset\pia K$.
\vskip4pt
\item[{\rm3)}] Moreover, the group theoretical recipe 
to recover $\clG_{\tottrK}$ is invariant under isomorphisms 
as follows: Let $\Phi\in{\rm Isom}(\pia K,\pia L)$ satisfy 
$\,\Phi\big(\Inrdiv(K)\big)=\Inrdiv(L)$. Then $\Phi$ maps
the generalized divisorial groups $T_{\tlv}\subset Z_{\tlv}$ 
of $\,\pia K$ onto the generalized divisorial groups  
$T_{\tlw}\subset Z_{\tlw}$ of 
$\,\pia L$, thus defines an isomorphism $\Phi:\clG_{\tottrK}
\to\clG_{\tottrL}$.
\end{itemize}
\end{prop}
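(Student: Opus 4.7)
The key input is the ``local theory'' of Pop~[P1] and Topaz~[To1,To2], which gives a purely group-theoretic characterisation of the generalised quasi divisorial pairs $\mT_{\tlbfvm}\subset\mZ_{\tlbfvm}\subset\pia K$ in terms of $\pic K\to\pia K$: they are exactly the maximal pairs $T\subset Z$ of closed subgroups such that (a)~the preimage $Z''\subset\pic K$ is abelian, (b)~$T''\subset Z''$ is its centre, and (c)~$T\cong\lvZ_\ell^m$ for $m$ the height. In particular $\td K k$ is the largest $d$ admitting $\Delta\cong\lvZ_\ell^d$ in $\pia K$ with $\Delta''$ abelian. Hence part~1) reduces to telling genuine prime flags apart from merely quasi prime flags, and this is precisely the job of the condition $T_r\subset\Inrtm_k(K)$.

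For (i)$\Rightarrow$(ii), let $\tlv_1\leq\dots\leq\tlv_r$ be a flag of prime divisors and set $T_m:=T_{\tlv_m}$, $Z_m:=Z_{\tlv_m}$. The rank conditions $T_m\cong\lvZ_\ell^m$, the centrality of $T_m''$ in $Z_m''$, and the existence of $\Delta\cong\lvZ_\ell^d$ with $\Delta''$ abelian inside $Z_r$ (prolong the flag inside $K\tlv_r|k$ to length $d$ using prime divisors of the residue function field) are direct consequences of Hilbert decomposition theory together with the cited local theory; the maximality of $(T_m,Z_m)$ follows from the bijective correspondence between containments of (generalised) prime divisors and containments of their inertia groups recalled just before the proposition. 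The one substantive point is $T_r\subset\Inrtm_k(K)$: on a smooth projective model $X$ of $K|k$, realise the flag via a regular system of parameters at the generic point of a codimension-$r$ subvariety, and approximate each generator of $T_r$ by inertia of prime 1-divisors of auxiliary smooth models that specialise to this subvariety --- the standard ``approximation by prime divisors'' / specialisation argument of Pop~[P1,P2]. Hence $T_r$ lies in the closure of $\cup_v T_v=\Inrdiv(K)$.

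For (ii)$\Rightarrow$(i), start from a maximal sequence $\{(T_m,Z_m)\}$ satisfying the four bullets. The local theory applied to each pair produces a generalised quasi prime $m$-divisor $\tlbfv_m$ with $(T_m,Z_m)=(\mT_{\tlbfvm_m},\mZ_{\tlbfvm_m})$, and the strict chain conditions $T_1\subsetneq\dots\subsetneq T_r$, $Z_1\supsetneq\dots\supsetneq Z_r$, combined with centrality and maximality, force the $\tlbfv_m$ to form an increasing flag of generalised quasi prime divisors. The \emph{main obstacle} is then to upgrade ``quasi prime'' to ``prime'', i.e.\ to show each $\tlbfv_m$ is trivial on $k$. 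This is precisely where $T_r\subset\Inrtm_k(K)$ enters: since $\Inrdiv(K)=\cup_v T_v$ is the union of inertia groups of valuations trivial on $k$, and since (as recalled before the proposition) inertia groups of distinct generalised quasi prime divisors intersect nontrivially only in the inertia of their infimum, the fact that a full-rank sublattice $T_r\cong\lvZ_\ell^r$ of $\mT_{\tlbfvm_r}$ lies in the closure $\Inrtm_k(K)$ forces $\tlbfv_r$ itself to be dominated by a valuation trivial on $k$, hence to be trivial on $k$; since $\tlbfv_m\leq\tlbfv_r$, each $\tlbfv_m$ is then a genuine prime $m$-divisor.

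Parts~2) and~3) are then formal consequences. For 2), letting $r$ run from $1$ to $\td K k$, the equivalence in 1) identifies the collection of all generalised divisorial pairs $T_{\tlv}\subset Z_{\tlv}\subset\pia K$; since the partial order on generalised prime divisors is encoded by inclusion of inertia groups, the graph $\clG_{\tottrK}$ is reconstructed. For 3), the entire recipe uses only the datum $\bigl(\pic K\to\pia K,\,\Inrdiv(K)\subset\pia K\bigr)$, so any isomorphism $\Phi$ preserving this structure and satisfying $\Phi\bigl(\Inrdiv(K)\bigr)=\Inrdiv(L)$ automatically carries divisorial pairs to divisorial pairs, yielding the induced isomorphism $\Phi\colon\clG_{\tottrK}\to\clG_{\tottrL}$.
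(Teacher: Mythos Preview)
Your proposal is essentially correct and follows the same route as the paper: the paper's own proof simply cites [P1],~Proposition~4.2 for the direction i)\,$\Rightarrow$\,ii) and observes that ii)\,$\Rightarrow$\,i) is identical with the proof of [P4],~Proposition~3.5, with $\Inrtm(K)$ replaced by $\Inrtm_k(K)$; your sketch is an expanded version of exactly these arguments. The one place where your write-up could be sharpened is the step ``$T_r\subset\Inrtm_k(K)$ forces $\tlbfv_r$ to be trivial on $k$'': the intersection property you invoke is not by itself enough, and what actually drives this (as in [P4],~3.5) is the description of $\Inrtm_k(K)$ via [P2],~Theorem~B as the set of all tame inertia elements at valuations trivial on $k$, after which the argument of loc.\,cit.\ applies verbatim.
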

\begin{proof}
The proof of the assertions~1),~2) follows instantly from
the characterization of the generalized divisorial groups
$T_{\tlv}\subset Z_{\tlv}$ given at~ii). Therefore, it is
sufficient to prove the equivalence of~assertions~i),~ii).
First, the implication i)~$\Rightarrow$~ii) holds even for 
a more general class of valuations, see~[P1],~Proposition~4.2.
The proof of the converse implication~ii)~$\Rightarrow$~i), 
is identical with the proof of the corresponding assertion 
from [P4],~Proposition~3.5, the only (formal) change 
necessary being to replace $\Inrtm(K)$ from~[P4] by
$\Inrtm_k(K)$ in the present situation.
\end{proof}
\noindent
C) {\it Fundamental groups and divisorial lattices}
\vskip5pt
We begin by recalling here a few basic facts
from~\nmnm{P}{op}~[P3] and proving a little bit more
precise/stronger results about the (pro-$\ell$ abelian)
fundamental group of quasi projective normal 
$k$-varieties, see the discussion from~[P3], Appendix,
section~7.3 for some of the details. These stronger and
more precise results will be needed later on, e.g., in
Subsection~D) when discussing specializations techniques. 
The facts were not known to the author and those whom 
he asked at the time~[P3] was written. 
\newpage
\vskip5pt
\noindent
$\bullet$ {\it On the fundamental group of sets of prime divisors\/}
\vskip2pt
Let $D$ be a set of prime divisors of $K|k$. We denote 
by $T_D\subseteq\pia K$ the closed subgroup 
generated by all the $T_v$, $v\in D$, and say that 
$\pia{1,D}:=\pia K/T_D$ is the \defi{fundamental 
group} of the set $D$. In the case $D$ equals the 
set of all the prime divisors $D=\DD{K|k}$ of  $K|k$, 
we say that $\pia{1,\DD{K|k}}=:\pia{1,K}$ is the 
\defi{(birational) fundamental group for} $K|k$.
Recall that a set $D$ of prime divisors of $K|k$ is called
\defi{geometric}, if there exists a normal model 
$X\to k$ of $K|k$ such that $D=\DD X$ is the set 
of Weil prime divisors of $X$. (If so, there are  
always quasi-projective normal models $X$ with 
$D=\DD X$.) In particular, 
if $X$ is a normal model of $K|k$ and $\Pi_1(X)$ 
denotes the maximal pro-$\ell$ abelian fundamental 
group of $X$, one has canonical surjective projections
$\pia{1,\DD X}\to\Pi_1(X)$ and $\pia{1,\DD X}\to\pia{1,K}$.
Moreover, if $U\subset X$ is an open {\it smooth\/} 
$k$-subvariety, then $\pia{1,\DD U}\to\Pi_1(U)$ is 
an isomorphism (by the purity of the branch locus).
In particular, $\pia{1,\DD U}=\Pi_1(U)$ is a finite 
$\Zell$-module, and since one has the canonical 
surjective homomorphisms 
$\pia{1,\DD U}\to\pia{1,\DD X}\to\pia{1,K}$, 
it follows that $\pia{1,\DD X}=\pia{1,D}$ and 
$\pia{1,K}$ are finite $\Zell$-modules. Further, 
it was shown in~[P3], Appendix,~7.3, see 
especially Fact~57, that there exist (quasi projective) 
normal models $X$ such that $\pia{1,\DD X}\to\pia{1,K}$ 
is an isomorphism. Nevertheless, it is not 
clear that for every geometric set $D$ there exists 
quasi projective normal models $X$ such that 
$\pia{1,\DD X}\to\Pi_1(X)$ is an isomorphism. 
\vskip2pt
We begin by recalling two fundamental facts
concerning {\it alterations\/} as introduced 
by~\nmnm{}{de}~\nmnm{J}{ong} and developed by
\nmnm{G}{abber}, \nmnm{T}{emkin}, and many 
others, see e.g.\ [ILO], Expose~X.
\vskip3pt
Let $D$ be a fixed geometric set of prime divisors
for $K|k$, $X_0$ be any projective normal model 
of $K|k$ with $D\subseteq\DD{X_0}$, and 
$S_0\subset X_0$ a fixed closed proper subset.
Usually $S_0$ will be chosen to define $D$ in 
the sense that $D=\DD{\bsl{X_0}{S_0}}$.
By~[ILO], Expose~X, Theorem~2.1, there exist 
\defi{prime to $\ell$ alterations above $S_0$,\/} i.e., 
projective generically finite separable morphisms 
$\YY \to X_0$ satisfying the following:
\vskip2pt
\begin{itemize}[leftmargin=20pt]
\item[{-}] $\YY$ is a projective smooth $k$-variety.
\vskip2pt
\item[{-}] $\TT:=f_0^{-1}(S_0)$ is a NCD (normal 
crossings divisor) in $\YY$.
\vskip2pt
\item[{-}] $[k(\YY):K]$ is prime to $\ell$. 
\end{itemize}
We denote by $D_0$ the restriction of~$\DD{\YY}$
to~$K$, and notice that $\DD{X_0}\subseteq D_0$, 
because $\YY\to X_0$ is surjective. Hence finally
$D\subseteq\DD{X_0}\subseteq D_0$.
\vskip2pt
Second, let $X_1\to X_0$ be a dominant morphism 
with $X_1$ a projective normal model of $K|k$ 
such that $D_0\subseteq\DD{X_1}$, thus we have 
$D\subseteq\DD{X_0}\subseteq D_0 \subseteq\DD{X_1}$. 
Then by \nmnm{}{de}~\nmnm{J}{ong}'s theory of alterations,
see e.g., [ILO], Expose~X, Lemma~2.2,  there 
exists a \defi{generically normal finite alteration} of 
$X_1$, i.e., a projective dominant $k$-morphism
$Z \to X_1$ satisfying the following:
\vskip2pt
\begin{itemize}[leftmargin=20pt]
\item[{-}] $Z$ is a projective smooth $k$-variety.
\vskip2pt
\item[{-}] The field extension $K=k(X_1)\hra k(Z)=:M$ 
is a finite and normal.
\vskip2pt
\item[{-}] ${\rm Aut}(M|K)$ acts on $Z$ and
$Z \to X_1$ is ${\rm Aut}(M|K)$-invariant. 
\end{itemize}
By a standard scheme theoretical construction 
(recalled below), there exists a projective normal 
model $\XX$ for $K|k$ and a dominant $k$-morphism 
$\XX \to X_1$ such that $Z \to X_1$ factors through 
$\XX \to X_1$, and the resulting $k$-morphism 
$Z \to \XX$ is finite. In particular, since $Z$ is 
smooth, thus normal, $Z \to \XX$ 
is the normalization of $\XX$ in the field extension 
$K\hra M$. 
\vskip2pt
We briefly recall the standard scheme 
theoretical construction, which is a follows: 
Let $Z \to\bsl{{\rm Aut}(M|K)}Z=:Z_{\rm i}$ be 
the quotient of $Z$ by ${\rm Aut}(M|K)$. Then
$Z\to Z_{\rm i}$ is a finite generically Galois 
morphism, and its function field $M_{\rm i}:=k(Z_{\rm i})$
satisfies: $M|M_{\rm i}$ is Galois, and $M_{\rm i}|K$
purely inseparable. Hence there exists $e>0$ such that 
$M_{\rm i}^{(e)}:={\rm Frob}^e(M_{\rm i})\subset K$,
and $M_{\rm i}^{(e)}$ is the function field of the 
$e^{\rm th}$ Frobenius twist $Z_{\rm i}^{(e)}$ 
of $Z_{\rm i}$. And notice that $Z_{\rm i}^{(e)}$
is a projective normal model for the function field 
$M_{\rm i}^{(e)}|k$. Further, the normalization 
of $Z_{\rm i}^{(e)}$ in the finite field extension 
$M_{\rm i}^{(e)}\hra M_{\rm i}$ is nothing but 
$Z_{\rm i}$. Finally, let $\XX$ be the normalization 
of $Z_{\rm i}^{(e)}$ in the function field extension
$M_{\rm i}^{(e)}\hra K$. Then $\XX$ is a projective 
normal $k$-variety because $Z_{\rm i}^{(e)}$
was so, and $k(\XX)=K$, thus $\XX$ is a projective 
normal model of $K|k$. Further, by the transitivity 
of normalization, it follows that the normalization 
of $\XX$ in $K\hra M_{\rm i}$ equals the normalization 
of $Z_{\rm i}^{(e)}$ in the field extension 
$M_{\rm i}^{(e)}\hra M_{\rm i}$, and that normalization
is $Z_{\rm i}$. Finally, using the transitivity of
normalization again, it follows that the normalization 
of $\XX$ in $K\hra M_1$ is $Z$ itself. Finally, to
prove that $Z\to X_1$ factors through $Z\to\XX$,
we proceed as follows:  First, since $X_1$ and $\XX$
are both projective normal models of $K|k$, there is
a canonical rational map $\XX\ratmap X_1$. We claim
that $\XX\to X_1$ is actually a morphism. Indeed, let
$x\in\XX$ be a fixed point, and $Z_x\subset Z$ 
be the preimage of $x$ under $Z\to\XX$. Then
${\rm Aut}(M|K)$ acts transitively on $Z_x$, and 
since $Z\to X_1$ is ${\rm Aut}(M|K)$-invariant,
it follows that the image of $Z_x$ under $Z\to X_1$
consists of a single point, say $x_1\in X_1$. Now let 
$V_1:={\rm Spec}\,R_1\subset X_1$ be an affine open 
subset containing $x_1$, and $W:={\rm Spec}\, S\subset Z$ 
be an ${\rm Aut}(M|K)$ invariant open subset of
$Z$ containing $Z_x$, and $V:={\rm Spec}\,R\subset X$
be the image of $W={\rm Spec}\,S$ under $Z\to\XX$.
Then identifying $R_1,R$ and $S_1$ with the corresponding
$k$-subalgebras of finite type of $K$, respectively of $M=k(Z)$,
it follows that $W\to V_1$ and $W\to V$ are defined
by the $k$-embeddings $R_1\hra S$, respectively 
$R\hra S$, defined via the inclusion $K\hra M$. Now
since $S$ is the normalization of $R$, it follows that
$R$ is mapped isomorphically onto $K\cap S$. Thus
since $K\hra M$ maps $R_1$ into $K\cap S$, we 
conclude that $R_1\subset R$. That in turns shows 
that $V\ratmap V_1$ is defined by the $k$-inclusion 
$R_1\hra R$, thus it is a morphism, and therefore, 
defined at $x\in V$. Conclude that $X\ratmap X_1$ 
is actually a $k$-morphism.
\vskip2pt 
\begin{prepa/nota}
\label{prepfct}
Summarizing the discussion above, for a geometric set 
$D$ of prime divisors for $K|k$, and $X_0$ a projective 
normal model of $K|k$ with $D\subseteq\DD{X_0}$, 
we let $S_0\subset X_0$ be a closed subset with 
$D=\DD{\bsl{X_0}{S_0}}$, and can/will consider 
the following:
\vskip2pt
\begin{itemize}[leftmargin=30pt]
\item[{a)}] A prime to $\ell$ alteration $\YY\to X_0$ 
above $S_0$. We denote by $T\subset\YY$ the 
preimage of $S_0\subset X_0$ under $\YY\to X_0$,
and by $D_0$ the restriction of $\DD\YY$ to $K$.
Hence $D\subseteq\DD{X_0}\subseteq D_0$.
\vskip2pt
\item[{b)}] A morphism of projective normal 
models $\XX\to X_0$ with $D_0\subseteq\DD\XX$.
For closed subsets $S'_0\subset X_0$, let $T'\subset\YY$
and $S'\subset\XX$ be the corresponding preimages
of $S'_0\subset X_0$.
\vskip2pt
\item[{c)}] A smooth projective $k$-variety $\ZZ$ together 
with a dominant finite $k$-morphism $\ZZ\to\XX$ such that
$K=k(X)\hra k(\ZZ)=:M$ is normal and ${\rm Aut}(M|K)$
acts on $\ZZ$.
\end{itemize}
\end{prepa/nota}
\begin{prop}
\label{Pivspi}
In the above notations, the following hold:
\vskip2pt
\begin{itemize}[leftmargin=30pt]
\item[{\rm 1)}] The canonical projection
$\pia{1,\DD{\bsl \XX{S'}}}\to\Pi_1(\bsl \XX{S'})$ is an
isomorphism. Hence one has canonical surjective projections
$\pia{1,\DD{\bsl{X_0}{S'_0}}}\to\pia{1,\DD{\bsl \XX{S'}}}
\to\Pi_1(\bsl \XX{S'})\to\Pi_1(\bsl{X_0}{S'_0})$. 
\vskip2pt
\item[{\rm 2)}] Suppose that $D_0\subseteq\DD{\bsl\XX{S'}}$.
Then $\pia{1,\DD{\bsl\XX{S'}}}\to\Pi_1(\bsl\XX{S'})\to\pia{1,K}$ 
are isomorphisms.
\end{itemize}
\end{prop}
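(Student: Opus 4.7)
The surjection $\pia{1,\DD U}\twoheadrightarrow\Pi_1(U)$ with $U:=\bsl{\XX}{S'}$ is automatic, because the inertia at each $v\in\DD U$ vanishes in the \'etale fundamental group of $U$. The substance of Part~1 is injectivity, equivalently: every abelian pro-$\ell$ extension $K'|K$ unramified at all $v\in\DD U$ must arise from a finite \'etale cover $V\to U$. Since purity of the branch locus (which would yield this at once) requires a \emph{regular} base and $U$ is only normal, my plan is to lift the problem to the smooth cover provided by $Z\to\XX$ in~(c), solve it there by purity, and descend.

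Concretely, let $g:Z\to\XX$ be as in~(c) and set $W:=g^{-1}(U)\subseteq Z$; then $W$ is smooth as an open of $Z$. Given $K'|K$ as above, form $M':=K'\cdot M$ and let $V'\to W$ be the normalization of $W$ in $M'$. Because $W\to U$ is finite, prime divisors of $W$ lie over prime divisors of $U$, so the hypothesis forces $V'\to W$ to be unramified in codimension one; purity of the branch locus on the smooth $W$ then upgrades this to genuine \'etaleness of $V'\to W$. The group $G:=\mathrm{Aut}(M|K)$ acts on $Z$, hence on $W$ and $V'$, and the construction of $\XX$ recalled in the paragraph preceding Preparation~\ref{prepfct} exhibits $\XX$ as the normalization of the Frobenius twist $Z/G=Z^{(e)}_{\mathrm{i}}$ in an appropriate field extension. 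Passing to $G$-invariants therefore recovers a finite \'etale cover of $U$ (up to Frobenius twists, invisible in the pro-$\ell$ abelian setting since $\chr(k)\neq\ell$), and this cover must coincide with the normalization $V\to U$ of $U$ in $K'$. Hence $V\to U$ is \'etale, proving Part~1.

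For Part~2, Part~1 provides $\pia{1,\DD U}\cong\Pi_1(U)$, and it remains to establish $\Pi_1(U)\cong\pia{1,K}$ under the hypothesis $D_0\subseteq\DD U$. Here I would invoke the strengthening of Fact~57 of~[P3], Appendix~7.3: the hypothesis guarantees that $\DD U$ contains all divisors restricted from $\DD{\YY}$, and the prime-to-$\ell$ property of $\YY\to X_0$ (with $\YY$ smooth projective and $T\subset\YY$ an NCD) is precisely what allows one to show that any remaining divisorial inertia of $K|k$ already vanishes in $\Pi_1(U)$. The argument proceeds by testing each such inertia on $\YY$, where purity applies, and by exploiting that prime-to-$\ell$ degrees are invisible in pro-$\ell$ abelian quotients.

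The main obstacle I anticipate is the descent step in Part~1. Since $g:W\to U$ is finite surjective but not automatically flat (because $U$ need not be Cohen--Macaulay), one cannot invoke faithfully flat descent directly to deduce \'etaleness of $V\to U$ from that of $V'\to W$. The proposed workaround is the explicit $G$-quotient identification $\XX\cong Z^{(e)}_{\mathrm{i}}$ up to a Frobenius twist, combined with the $\Isom^{\rm F}$-formalism pervasive in the paper. Verifying that the $G$-quotient of an \'etale cover $V'\to W$ is indeed \'etale over $U$---rather than only \'etale in codimension one---is the technical heart of the argument.
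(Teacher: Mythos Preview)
Your identification of the descent step as the crux is exactly right, but the proposed workaround does not close the gap. Passing to $G$-invariants of the \'etale cover $V'\to W$ yields a cover of $W/G$, which is an open subset of $Z_{\rm i}=Z/G$, not of $U\subset X$; and even over $W/G$ the quotient $V'/G\to W/G$ need not be \'etale, because the $G$-action on $Z$ is not free (the very points where $X$ is singular are the images of $G$-fixed loci). The Frobenius-twist identification $X\cong$ normalization of $Z_{\rm i}^{(e)}$ does not help here: it relates $X$ to $Z_{\rm i}$ only birationally in the inseparable direction, and says nothing about \'etaleness at the bad points. A concrete illustration of the failure is the quadric cone $U=\{xy=z^2\}$ with its double cover $\mathbb A^2\to U$ playing the role of $Z\to X$: then $M'=K'$, $V'=W$, $V'\to W$ is the identity (hence \'etale), yet $V\to U$ is not \'etale at the vertex. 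This particular $U$ does not arise as the paper's $X$ (the construction forces $D_0\subseteq\DD X$, so the exceptional divisor of a resolution must be visible), but it shows that your descent principle is false in the generality in which you are trying to apply it.

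The paper's route is genuinely different and avoids descent from $Z$ altogether. It uses $Z$ only to prove a \emph{pointwise} statement (Claim~1): for cyclic $\tl K|K$, the inertia at any point $\tl x\in\tl X$ is contained in the inertia $T_{\tl w}$ of some prime divisor $\tl w$ of $\tl K|k$ whose center specializes to $\tl x$. The global input comes from the prime-to-$\ell$ alteration $Y\to X_0$, not from $Z$: a Sylow argument (Claim~2) shows that a prime divisor of $K|k$ ramifies in $\tl K|K$ iff some prolongation to $L=k(Y)$ ramifies in $\tl L|L$. Combining these (Lemma~\ref{XvsY}) gives the chain: unramifiedness at $\DD{\bsl XS'}$ $\Rightarrow$ unramifiedness of all $\DD{\bsl Y{T'}}$ in $\tl L|L$ $\Rightarrow$ $\tl Y\to Y$ \'etale over $\bsl Y{T'}$ (purity on smooth $Y$) $\Rightarrow$ $\tl X\to X$ \'etale over $\bsl X{S'}$ (via Claims~1 and~2). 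So $Y$ is essential already for Part~1, not only for Part~2; your proposal invokes it only in Part~2 and tries to make $Z$ carry the entire burden in Part~1, which it cannot.
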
 
\begin{proof} To 1): The existence and the subjectivity
of the projections is clear. Thus it is left to show that
$\pia{1,\DD{\bsl \XX{S'}}}\to\Pi_1(\bsl \XX{S'})$ is 
injective. Equivalently,
one has to prove the following: Let $\tl K|K$ be  
an abelian $\ell$-power degree extension, and 
$\tl \XX \to \XX$ be the normalization of $\XX$ in 
$K\hra\tl K$. Then $\tl \XX \to \XX$ is etale above
$S'$ if and only if none of the prime divisors 
$v\in\DD{\bsl\XX{S'}}$ has ramification in $\tl K|K$.
Clearly, this is equivalent to the corresponding 
assertion for all cyclic sub extensions of $\tl K|K$,
thus without loss of generality, we can suppose 
that $\tl K|K$ is cyclic, thus ${\rm Gal}(\tl K|K)$ is cyclic.
For points $\tl x\mapsto x$ under $\tl\XX\to\XX$ 
and valuations $\tl v|v$ of $\tl K|K$, we denote 
by $T_{\tl x}$, respectively $T_{\tl v}$ the 
corresponding inertia groups.
\vskip5pt
\noindent
{\bf Claim 1.} {\it Suppose that 
$\tl G:={\rm Gal}(\tl K|K)=\langle\tl g\hhb{.75}\rangle$ 
is cyclic. Then for every $\tl x\in \tl \XX$ there 
exists a prime divisor $\tl w$ of $\tl K|k$ with 
$T_{\tl x}\subseteq T_{\tl w}$ and $\tl x$ in the closure  
of the center $x_{\tl w}$ of $\tl w$ on $\tl X$.\/}
\vskip5pt
{\it Proof of Claim 1.\/} Compare with 
\nmnm{P}{op}~[P2],~Proof of~Theorem~B. Recalling
the cover $\ZZ\to X$ with $M:=k(\ZZ)$, let 
$\tl K':=M\tl K$ be the compositum of $\tl K$ and 
$M=k(Z)$, and $\tl X'\to X$ be the normalization of 
$X$ in the function field extension $K\hra\tl K'$ 
and the resulting canonical factorizations 
$\tl X' \to\tl X \to X$ and $\tl X' \to Z\to X$. Further, 
choosing a preimage $\tl x'$ of $\tl x$ under
$\tl X'\to\tl X$, consider $\tl x'\mapsto\tl x\mapsto x$ 
and $\tl x'\mapsto z\mapsto x$ under the above 
factorizations of $\tl X\to X$. Then by the 
functoriality of the Hilbert decomposition/ramification 
theory, there are surjective canonical projections 
$T_{\tl x'}\to T_{\tl x}$ and $T_{\tl x'}\to T_z$. Thus
given a generator $\tl g$ of $T_{\tl x}$, there exists 
$\tl g' \in T_{\tl x'}$ which maps to $\tl g$ under 
$T_{\tl x'}\to T_{\tl x}$. And if $\tl w'$ is a prime 
divisor of $\tl K'|k$ such that $g'\in T_{\tl w'}$,
then setting $\tl w:=\tl w'|_{\tl w}$, it follows that
$g\in T_{\tl w}$. Hence letting $K':=M^{\tl g}$ be the 
fixed field of $\tl g$ in $M$, and replacing $\tl K|K$ by 
$\tl K'|K'$, we can suppose that from the beginning 
we have $M\subset \tl K$, and $\tl K|K$ is cyclic with 
Galois group $\tl G=\langle \tl g\rangle$, and $M|K$
is a cyclic subextension, say with Galois group
$G=\langle g\rangle$, where $g=\tl g|_M$. And 
notice that $\tl G=T_{\tl x}$ and $G=T_z$. Thus 
$G$ acts on the local ring $\clO_z$ of $z$, and 
$\tl G$ acts on the local ring $\clO_{\tl x}$ of $\tl x$.
\vskip3pt
\vskip3pt
\underbar{Case 1}. $K=M$. Then $\XX=\ZZ$ 
is a projective smooth model for $K|k$, thus 
$\clO\!_x=\clO_z$ is a regular ring. Let $\nx{\clO_{\!x}}$
be the normalization of $\clO\!_x$ in $K\hra\tl K$,
and ${\rm Spec}\,\nx{\clO_{\!x}}\to{\rm Spec}\,\clO\!_x$
the restriction of $\tl \XX\to \XX$ to ${\rm Spec}\,\nx{\clO_{\!x}}$.
Since ${\rm Spec}\,\clO\!_x$ is regular, by the 
purity of the branch locus it follows that $T_{\tl x}$
is generated by inertia groups of the form $T_{\tl w}$,
with $\tl w$ prime divisors of $\tl K|k$ having the
center on ${\rm Spec}\,\nx{\clO_{\!x}}\subset\tl\XX$.
Since $T_{\tl x}$ is cyclic, it follows that there exists
$\tl w$ with $T_{\tl x}\subseteq T_{\tl w}$, etc.
\vskip2pt
\underbar{Case 2}. $K\subset M$ strictly. 
Then letting $(\clO,\eum)$ be the local ring 
$(\clO_z,\eum_z)$ at $z$, one has $T_z=G$. Then
proceeding as in the in the proof of~Theorem~B, 
explanations after~Fact~2.2, (the proof of)~Lemma~2.4
of loc.cit.\ is applicable, and by~Step~3 of that proof, 
it follows by Lemma~2.6 of loc.cit.\ that there exists 
a local ring $(\clO'\!,\eum')$ which has the properties:
\vskip3pt
\noindent \ \ \ \
- $G$ acts on $(\clO'\!,\eum')$, and $(\clO'\!,\eum')$ 
dominates $(\clO_z,\eum_z)$.
\vskip2pt
\noindent \ \ \ \
- There exist local parameters $(t'_1,\dots,t'_d)$
of $\clO'$, and a primitive character $\chi$ of $G$
such 
\vskip0pt \ \ \ 
that $\sigma(t'_i)=t'_i$ for $1\leq i < d$ and 
$\sigma(t'_d)=\chi(\sigma)t'_d$.
%
\vskip3pt
\noindent
Since $\clO'$ dominates $\clO_z$, it follows that
$\clO^T:={\clO'}^G$ dominates $\clO_{\!z}^G=\clO\!_x$, 
and $\clO^T$ is a regular ring by~Step~4 of loc.cit. 
Further $G=T_z$ is a quotient of the ramification 
group $T_{\clO^T}$ of $\clO^T$ in $K\hra\tl K$, and
since $K\hra\tl K$ is cyclic of $\ell$-power order, it
follows that $T_{\clO^T}={\rm Gal}(\tl K|K)=T_{\tl x}$.
Thus $\clO^T$ has a unique prolongation $\tl\clO^T$
to $\tl K$. And since $\clO^T$ dominates $\clO\!_x$,
it follows that $\tl\clO^T$ dominates $\clO_{\tl x}$
(which is the unique prolongation of $\clO\!_x$ to $\tl K$).
On concludes in the same way as at~Case~1 above. 
\vskip5pt
\noindent
{\bf Claim 2.} 
{\it Let $\tl K|K$ be an abelian $\ell$-power Galois 
extension as above. Then a prime divisor~$w$ 
of $K|k$ ramifies in $\tl K|K$ iff $w$ has a prolongation 
$w_L$ to $L$ which ramifies in $\tl L|L$.\/}
\vskip5pt
{\it Proof of Claim 2.\/}
Let $\hat K|K$ be a (minimal) finite normal 
extension with $L,\tl K\subseteq\hat K$, and 
${\rm Gal}(\hat K|K)=:\hat G\to\tl G:={\rm Gal}(\tl K|K)$, 
$\hat g\mapsto g$, 
be the corresponding projection of Galois groups. 
Setting $\tl L=L\tl K$ inside $\hat K$, it follows that 
${\rm Gal}(\tl L|L)=:H \to \tl G$ is an isomorphism, and 
$\hat H:={\rm Gal}(\hat K|L) \to H$ is surjective,
because $\tl K$ and $L$ are linearly disjoint over 
$K$. Thus there exists a Sylow $\ell$-group 
$\hat H_\ell$ of $\hat H$ with $\hat H_\ell \to H$ 
surjective. Since $(\hat G:\hat H)=[L:K]$ 
is prime to $\ell$, it follows that $\hat H_\ell$ is a 
Sylow $\ell$-group of $\hat G$ as well. Let 
$\hat w|\tl w|w$ denote the prolongations of $w$ 
to $\tl K$, respectively $\hat K$, and $T_{\hat w}\to T_{\tl w}$ 
be the corresponding surjective projections of the 
inertia groups. For every $\sigma\in T_{\tl w}$, let 
$\hat\sigma\in T_{\hat w}$ be a preimage of 
order a power of $\ell$. Then $\hat\sigma$ is 
contained in a Sylow $\ell$-group of $\hat G$, 
hence there exists a conjugate $\hat\sigma^{\hat\tau}$
which lies in $\hat H_\ell$. Thus replacing $\hat w|\tl w|w$
by $\hat w^{\hat\tau}|\tl w^\tau|w$, and $\sigma$, $\hat\sigma$ 
by $\sigma^\tau\!$, respectively $\hat\sigma^{\hat\tau}\!$, 
without loss of generality, we can suppose that 
$\hat\sigma\in T_{\hat w}\subseteq\hat H_\ell$ is a 
preimage of $\sigma\in T_{\tl w}$. Thus setting
$\tl w_L:=\hat w|_{\tl L}$ and $w_L:=\hat w|_L$, 
it follows that $\tl w_L|w_L$ are prolongations 
of $\tl w|w$ to $\tl L$, respectively $L$. And 
taking into account that $\hat H \to \tl G$ factors 
as $\hat H \to H \to \tl G$, it follows that the image 
$\sigma_L\in H$ of $\hat\sigma\in\hat H$ under 
$\hat H\to H$ lies in $T_{\tl w_L}\subset H$. 
This concludes the proof of~Claim~2.
\vskip3pt
Coming back to assertion~1) of Proposition~\ref{Pivspi},
we prove the following more precise result: 
\begin{lem}
\label{XvsY}
Let $\tl K|K$ be a finite abelian pro-$\ell$ field extension, 
and set $\tl L:=L\tl K$. Then for the corresponding 
normalization $\tl\XX\to\XX$ and $\tl\YY\to\YY$, the 
following assertion are equivalent:
\vskip2pt
\begin{itemize}[leftmargin=35pt]
\item[{\rm i)}] $\tl\XX\to\XX$ is etale above $S'\!$.
\vskip2pt
\item[{\rm ii)}] All $v\in\DD{\bsl\XX{S'}}\cap D_0$
 are unramified in $\tl K|K$.
 \vskip2pt
 \item[{\rm iii)}] All $w\in\DD{\bsl\YY{T'}}$ are unramified 
 in $\tl L|L$.
 \vskip2pt
\item[{\rm iv)}] $\tl\YY\to\YY$ is etale above $T'\!$.
\end{itemize}
\end{lem}
{\it Proof of Lemma~\ref{XvsY}.\/} First,~i) implies~ii) in an 
obvious way. For the implication~ii)~$\Rightarrow$~iii), let 
$v_L\in\DD{\bsl\YY{T'}}$ have restriction $v:=(v_L)|_K$ to $K$.
Then $v\in D_0\cap\DD{\bsl\XX{S'}}$, as observed above.
On the other hand, by~Claim~2) above one has that $v$
does not ramify in $\tl K|K$ iff all its prolongations to $L$ 
do not ramify in $\tl L|L$. Hence $v_L$ does not ramify in
$\tl L|L$. Next, since $\YY$ is smooth, thus regular, 
assertions~iii),~iv) are equivalent by the purity of the branch 
locus. Thus it is left to prove that~iv) implies~i). By
contradiction, suppose that $\tl \XX\to\XX$ is branched at
some point~$x\in\bsl\XX{S'}$. Then by the discussion 
before~Claim~1 combined with~Claim~1, it follows that
there exists a prime divisor $w$ of $K|k$ which is ramified
in $\tl K|K$ and $x$ lies in the closure of the center~$x_w$
of $w$. Thus since $x\in\bsl\XX{S'}$ and $S'\subset\XX$ 
is closed, it follows that $x_w\in\bsl\XX{S'}$. By~Claim~2
there exists a prolongation $w_L$ of $w$ to $L$ which 
is ramified in $\tl L|L$, and let $y_w$ be the center
of $w_L$ on $\YY$. Then by the compatibility of
center of valuations with (separated) morphism,
it follows that $w_L$ and its restriction $w$ to $K$
have the same center $x_0$ on $X_0$, and that 
center satisfies $y_w\mapsto x_0$, $x_w\mapsto x_0$.
Now let $S'_0\subset X_0$ be the closed subset 
such that $S'\subset\XX$ is the preimage of $S'_0$, 
and let $T'\subset\YY$ be the preimage of $S'_0$. Then 
$x_0\in\bsl{X_0}{S'_0}$ (because $x\in\bsl\XX{S'}$),
and $y_w\in\bsl\YY{T'}$. Since $w_L$ is branched 
in $\tl\YY\to\YY$ and has center $y_L\in\bsl\YY{T'}$,
it follows that $\tl\YY\to\YY$ is not etale above $T'\!$,
contradiction! This concludes the proof of~Lemma~\ref{XvsY},
thus of~assertion~1) of Proposition~\ref{Pivspi}.
\vskip2pt
To 2): Let $S'_0\subset X_0$ be such that $S'\subset\XX$
is the preimage of $S_0$, and $T'\subset\YY$ be the
preimage of $S'_0$ in $\YY$. In the notations from
the proof of~assertion~2) above, suppose that all 
$v\in\DD{\bsl\XX{S'}}$ are unramified in $\tl K|K$. We
claim that all prime divisors $w$ of $K|k$ are unramified
in $\tl K|K$. Indeed, by~Claim~2 above and the discussion
there after, it follows that a prime divisor $w$ of $K|k$ 
is ramified in $\tl K|K$ iff $w$ has a prolongation $w_L$ 
to $L$ which is ramified in $\tl L|L$ iff there exists 
$v_L\in\DD\YY$ which is ramified in $\tl L|L$ iff the
restriction $v:=(v_L)|_K$ is ramified in $\tl K|K$. In 
other words, $\pia{1,\DD\YY}=\pia{1,L}$ if and only 
if $\pia{1,\DD{\bsl\XX{S'}}}=\pia{1,K}$, etc.
\end{proof} 
\vskip2pt
\noindent
$\bullet$ {\it Reviewing divisorial lattices\/}
\vskip3pt
Let $D=\DD X$ be a geometric set for $K|k$, where 
$X$ is a quasi projective normal model of  $K|k$. Then 
$\Gamma(X,\clO_X)^\times=:\Gm(D)$, the group of 
principal divisors $\clH_K(X):=K\tms/\hp\Gm(D)$, 
and $\Div(X)$ depend on $D$ only, and not on the 
specific $X$ with $D_X=D$. Hence the 
exact sequence $0\to\clH_K(X)\hor{{\rm div}\,}\Div(X)
             \hor{{\rm pr}\,}\euCl\,(X)\to 0$
depends only on $D$. To stress that, write:
\vskip7pt
\centerline{$
0\to\clH_K(D)\hor{{\rm div}\,}\Div(D)
             \hor{{\rm pr}\,}\euCl\,(D)\to 0$.}
\vskip7pt
\noindent
Tensoring the above exact sequence with $\lvZ/n$ for all \
$n=\ell^e$, $e>0$, one gets exact sequence of $\lvZ/n$-modules 
$
0\to{}_n\euCl(D)\to\clH_K(D)/n\to\Div(D)/n\to\euCl\,(D)/n\to 0,
$
where ${}_n\euCl(D)$ is the $n$-torsion of $\euCl(D)$.
Since $1\to\Gm(D)/k\tms\to K\tms/k\tms\to\clH_K(D)\to1$
is an exact sequence of free abelian groups, 
$1\to\Gm(D)/n\to K\tms/n\to\clH_K(D)/n\to1$ 
is exact as well. Hence if $U(D)/n\subset K\tms/n$ 
is the preimage of ${}_n\euCl(D)$ under 
$K\tms/n\to\clH_K(D)/n$, it follows that the resulting 
sequence $1\to\Gm(D)/n\to U(D)/n\to{}_n\euCl(D)\to0$
is exact, thus the above long exact sequence above give rise 
canonically to a long exact sequence:
\[
1\to U(D)/n\to K\tms/n\to \Div(D)/n\to\euCl\,(D)/n\to0\,.
\]
Taking projective limits over $n=\ell^e\to\infty$, we 
get the long exact sequence of $\ell$-adic modules:
\[
1\to\whU(D)\hra\whK\hhb2
   \hor{{\rm div}\,}\hhb2\whDiv(D)\hhb2
     \hor{{\rm pr}\,}\whCl\,(D)\to0\,.
 \leqno{\hhb{20}(\dag)}
\] 
A geometric set $D$ is called 
\defi{complete regular like}, if for every geometric 
set $\tl D\supseteq D$ one has that $\whU(D)=\whU(\tl D)$ 
and $\whCl(\tl D)\cong\whCl(D)\oplus\Zell^r$, where 
$r:=|\bsl{\tl D}D|$. The following hold:
\vskip2pt
\begin{itemize}[leftmargin=25pt]
\item[a)] The complete regular like sets of prime divisors 
are quite abundant, namely: First, every geometric set is 
contained in a complete regular like set $D$. Second, if $D$ 
complete regular like, then every geometric set containing 
$D$ is complete regular like as well.
\vskip2pt
\item[b)] For a complete regular like geometric 
set $D$, let $\euCl^0(D)\subseteq\euCl'(D)$ be the 
maximal divisible, respectively $\ell$-divisible subgroups 
of $\euCl(D)$. Then by structure of $\euCl(D)$,
see e.g.~[P3],~Appendix,~7.3, it follow that 
$\euCl'(D)/\euCl^0(D)$ is a (finite) torsion group 
of order prime to $\ell$. Thus if $\Div^0(D)\subseteq\Div'(D)$ 
are the preimages of $\euCl^0(D)\subseteq\euCl'(D)$
in $\Div(D)$, it follows that
$\Div^0(D)_\pel=\Div'(D)_\pel$ inside
$\Div(X)_\pel$.\footnote{Recall that for an abelian
group $A$, we denote $A_\pel:=A\otimes\lvZ_\pel$.} 
\vskip2pt
\item[c)] Both $\whU\!_K:=\whU(D)$ and 
$\Div'(D)_\pel$ are birational invariants of $K|k$. 
\end{itemize}
\vskip2pt
We let $\lxzl K\subset\whK$ be the preimage of 
$\Div'(D)_\pel\subset\whDiv(D)$ under 
${\rm div}:\whK\to\whDiv(D)$, and we call $\lxzl K$ 
the \defi{canonical divisorial $\whU\!_K$-lattice} for 
$K|k$. Notice that ${\rm div}(\lxzl K)=\Div'(D)_\pel$.
\vskip5pt
Next we recall the Galois theoretical counterpart
of the above exact sequence~$(\dag)$. 
\vskip5pt
Let $D$ be a geometric (complete regular like) set of 
prime divisors of $K|k$. Recall the closed subgroup \
$T_D\subseteq \Pi_K$ generated by $T_v$, $v\in D$,
and the resulting 
$1\to T_D\to\Pi_K\to\pia{1,D}\to1$. Interpreting
the elements of $\clP(K)=K\tms/k\tms\subset
\whK={\rm Hom}(\pia K,\Zell)$ as function on
$\pia K$, one gets: Each $T_v$ has a \defi{\it unique\/} 
generator $\tau_v\in T_v$, called \defi{canonical},
such that for uniformizing parameters $t_v\in\clO_v$ 
one has: $t_v(\tau_v)=1$. Then 
$\jmath^v:\whK={\rm Hom}(\Pi_K,\Zell)
    \to{\rm Hom}(T_v,\Zell)=\Zell$,
$\varphi\mapsto\varphi|_{T_v}$, is identified with the 
$\ell$-adic completion of the valuation $v:K\tms\to\lvZ$. 
\vskip2pt
Finally, the system of the maps $\jmath^v:\whK\to\Zell$,
$v\in D$, gives rise by restriction canonically~to
\[
\jmath^D=\oplus_v\,\jmath^v:\lxzl K\to\Div_D:=\oplus_v\,\Zell\,.
\]
Let $\euF_D$ be the free abelian pro-$\ell$ 
group on $(\tau_v)_{v\in D}$. The canonical projection
$\euF_D \to T_D\subseteq\Pi_K$ gives rise to a long exact 
sequence $1\to\euR_D\to\euF_D\to\Pi_K\to\pia{1,K}\to1$ 
of pro-$\ell$ abelian groups, where $\euR_D\subset\euF_D$ 
is the relation module for the system of inertia generators 
$(\tau_v)_v$, and the first three terms of the exact 
sequence have no torsion. Since the $\ell$-adic 
dual of $\euF_D$ is canonically isomorphic to 
$\whDiv_D$, by taking $\ell$-adic duals, on gets an 
exact sequence:
\[
1\to\whU\!_D\hra\whK\hor{{\widehat\jmath^{\hhb2D}}}
   \whDiv\hm1_D\hor{\rm can\hhb4}\whCl\hp_D\to0,
\leqno{\hhb{20}(\dag)_{\rm Gal}}
\]     
in which each of the terms is the $\ell$-adic dual of the 
corresponding pro-$\ell$ group, $\widehat\jmath^{\hhb2D}$ 
is the $\ell$-adic completion of the map $\jmath^{\hhb1D}$
given above, and the other morphisms are canonical.
\vskip2pt
Now recalling that $D=D_X$ for some quasi projective 
normal model of $K|k$, it is obvious that the above exact
sequences~$(\dag)$ and~$(\dag)_{\rm Gal}$ are canonically
isomorphic, and in particular one has $\whU\!_D=\whU(D)$
inside $\whK$, and $\whCl_D\cong\whCl(D)$ canonically. 
And notice that the exact sequence~$(\dag)_{\rm Gal}$ 
was constructed from $\Pi_K$ endowed with the canonical 
system $(\tau_v)_{v\in D}$ of the  inertia groups $(T_v)_{v\in D}$ 
only. {\it Unfortunately,\/} we do no have a group theoretical 
recipe (yet?) to identify the canonical system of inertia 
generators $(\tau_v)_v$ ---\hhb1say, up to simultaneous  
raising to some power $\epsilon\in\Zell\tms\,$, from 
$\pic K$ endowed with $\clG_{\tottrK}$. Nevertheless, 
using the theory of \defi{geometric decomposition 
graphs} as developed in~[P3], one proves the following:
\begin{prop}
\label{propSubsecC}
In the above notations, the following hold:
\vskip3pt
\begin{itemize}[leftmargin=25pt]
\item[{\rm1)}] There exist group theoretical recipes to
recover{\rm/}reconstruct from $\clG_{\tottrK}$ the following:
\vskip2pt
\begin{itemize}[leftmargin=20pt]
\item[{\rm a)}] The geometric sets of prime divisors 
$D$ of $K|k$. Moreover, the group theoretical recipes single out 
the complete regular like sets of prime divisors. 
\vskip2pt
\item[{\rm b)}] Given a complete regular like 
set of prime divisors $D$, the recipes reconstruct 
the divisorial $\whU\!_K$-lattices of the form 
$\epsilon\cdot\lxzl K\subset\whK$,  or equivalently, 
the subgroups of the form 
$\epsilon\cdot\Div'(D)_\pel\subset\whDiv(D)$, and 
systems of inertia generators 
$(\tau_v^\epsilon)^{\phantom{'}}_{v\in D}$
for all $\epsilon\in\lvZ_\ell\tms/\lvZ_\pel\tms\,$. 
Thus finally the recipes reconstruct the exact 
sequences of the form 
\[
1\to\whU\hmm_K\to\epsilon\cdot\lxzl{K}
   \hor{\widehat\jmath^{\hhb2D}}\epsilon\cdot
     \Div(D)_\pel\hor{{\rm can}\hhb3}\whCl\hhb1(D),
       \quad\epsilon\in\Zell\tms\,.
\]
\item[{\rm c)}] Replacing any complete regular 
like set $D$ by any geometric subset $D'\subset D$, 
one gets by restriction the corresponding
$1\to\whU\!_{D'}\to\epsilon\cdot\lxzl{D'}
   \hor{\widehat\jmath_{\!D'}}\epsilon\cdot\Div(D')_\pel
      \horr{{\rm can}_{D'}\,}\whCl\hhb1(D')$.
\end{itemize}
\vskip2pt
\item[{\rm2)}] The recipes to recover the above 
information from $\clG_{\tottrK}$ are invariant 
under isomorphisms of total decomposition graphs 
$\Phi:\clG_{\tottrK}\to\clG_{\tottrL}$ as follows: 
$\Phi$ defines bijections $D_K\to D_L$, say
$v\mapsto w$, between the $($complete regular 
like$\hhb1)$ geometric sets $D_K$ for $K|k$, 
and $D_L$ for $L|l$. Further, there exist 
$\epsilon\in\Zell\tms$ and 
$\epsilon_v\in\epsilon\cdot\lvZ_\pel\tms\,$, $v\in D_K$,
satisfying: 
\vskip2pt
\begin{itemize}[leftmargin=30pt]
\item[{\rm a)}]  
$\Phi(\tau_v)_{v\in D_K}=(\tau_w^{\epsilon_v})^{\phantom{'}}_{w\in D_L}$,
hence $\Phi$ gives rise to an isomorphism of $\lvZ_\pel$-modules
$\Div(D_L)_\pel\to\epsilon\cdot\Div(D_K)_\pel$ defined by 
via $\jmath^w\mapsto\epsilon_v\cdot\jmath^v$ for $v\mapsto w$.
\vskip2pt
\item[{\rm b)}] The Kummer isomorphism $\whph:\whL\to\whK$ 
of $\,\Phi$, i.e, the $\ell$-adic dual of $\,\Phi:\pia K\to\pia L$, satisfies:
$\whph(\whU\!_L)=\whph(\whU\!_K)$, $\whph(\lxzl L)=\epsilon\cdot\lxzl K$.
\end{itemize}
\end{itemize}
\end{prop}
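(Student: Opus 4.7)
The plan is to bootstrap on the theory of abstract and geometric decomposition graphs developed in \nmnm{P}{op}~[P3]: essentially all the combinatorial and group-theoretical recipes are already there, and what remains is to verify that they apply unchanged in the present setting and to carefully track the indeterminacy in the choice of canonical inertia generators. Since $\clG_{\tottrK}$ encodes, for every prime divisor $v$, the pair $T_v\subset Z_v$ in $\pia K$, I would first form, for every subset $D\subseteq\clD_{K|k}$, the closed subgroup $T_D\subseteq\pia K$ and the candidate fundamental group quotient $\pia K/T_D=:\pia{1,D}$. I would then characterize the geometric sets $D$ by requiring $\pia{1,D}$ to be a finitely generated $\Zell$-module of the shape forced by a quasi-projective normal model --- using Proposition~\ref{Pivspi} together with~[P3],~Appendix~7.3 --- and single out the complete regular like sets by the stability property $\whU(D)=\whU(\tl D)$ for geometric $\tl D\supseteq D$ together with the free-summand condition on $\whCl(D)$. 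Both $\whU(D)$ and $\whCl(D)$ are visible from the exact sequence $(\dag)_{\rm Gal}$ constructed below, so the conditions are themselves group theoretic.

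Second, for a fixed complete regular like $D$, I would reconstruct $(\dag)_{\rm Gal}$ as follows. Each $T_v\cong\Zell$ has a $\lvZ_\ell\tms$-orbit of generators; any choice of a system $(\tau_v)_{v\in D}$ yields the surjection $\euF_D\to T_D$ and hence the whole sequence. Simultaneous rescaling of all $\tau_v$ by a single unit $\epsilon\in\lvZ_\ell\tms$ produces the $\epsilon$-twisted sequence, while further rescaling by $u\in\lvZ_\pel\tms$ corresponds, under Kummer duality, to rescaling elements of $K\tms$ by $\ell$-prime units, an operation which preserves $\lxzl K$ setwise because its effect on $\whDiv(D)$ lies in the free $\lvZ_\pel$-module structure that defines $\lxzl K$. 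Hence the recipe delivers a family of exact sequences naturally parametrized by $\epsilon\in\lvZ_\ell\tms/\lvZ_\pel\tms$, giving~1),~b); restriction to a geometric subset $D'\subset D$ yields~1),~c) by simple functoriality of the construction.

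For the invariance in~2), any isomorphism $\Phi:\clG_{\tottrK}\to\clG_{\tottrL}$ maps inertia groups to inertia groups by construction, hence induces the bijection $D_K\to D_L$, $v\mapsto w$, and sends each canonical generator $\tau_v$ to $\tau_w^{\epsilon_v}$ for some $\epsilon_v\in\lvZ_\ell\tms$. That the $\epsilon_v$ collapse modulo $\lvZ_\pel\tms$ to a single class $\epsilon\in\lvZ_\ell\tms/\lvZ_\pel\tms$ is forced by demanding that $\Phi$ respect the birationally invariant sublattice $\lxzl K$ together with its image: two $v$-wise inequivalent normalizations would produce two distinct $\lvZ_\pel$-submodules of $\whDiv(D)$ matching the same $\whph(\lxzl L)$, which is ruled out by the free $\lvZ_\pel$-module structure of $\mathrm{Div}'(D)_\pel$. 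Assertion~2),~b) for the Kummer dual $\whph$ then follows by transporting the reconstructed sequence $(\dag)_{\rm Gal}$ across $\Phi$. The main obstacle is genuinely conceptual rather than technical: the individual generators $\tau_v$ cannot be pinned down from the abstract pro-$\ell$ group $\pic K$, because the $\lvZ_\ell\tms$-action on each $T_v$ by $\ell$-adic rescaling is invisible there, and the best one can achieve is parametrization by $\epsilon\in\lvZ_\ell\tms/\lvZ_\pel\tms$; this residual ambiguity is precisely what the subsequent subsections~D),~E) must chip away at in order to approach $\clP(K)$ itself.
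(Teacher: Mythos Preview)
Your approach is in the right spirit --- and indeed the paper's own proof is simply a citation to~[P3], Propositions~22,~23 (for assertion~1) and Proposition~30 (for assertion~2) --- but your sketch glosses over the one genuinely nontrivial step, and the argument you give in its place is circular.

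The gap is in~1),~b) and in the coincidence of the $\epsilon_v$ in~2),~a). You correctly observe that from $\clG_{\tottrK}$ alone each $T_v$ only determines its generator up to a unit $\epsilon_v\in\lvZ_\ell^\times$, and that rescaling by $\lvZ_\pel^\times$ preserves the $\lvZ_\pel$-module $\Div'(D)_\pel$. But you never explain why the $\epsilon_v$ can be chosen to agree \emph{up to a single global} $\epsilon$ modulo $\lvZ_\pel^\times$. Your justification --- that inequivalent choices would yield two distinct $\lvZ_\pel$-submodules of $\whDiv(D)$ matching $\whph(\lxzl L)$ --- presupposes that $\whph(\lxzl L)$ lands in some $\epsilon\cdot\lxzl K$, which is exactly the conclusion in~2),~b) that you are trying to establish. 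Without an independent argument, nothing rules out that an abstract isomorphism $\Phi$ sends $(\tau_v)_v$ to $(\tau_w^{\epsilon_v})_w$ with the $\epsilon_v$ genuinely varying over distinct classes in $\lvZ_\ell^\times/\lvZ_\pel^\times$. The mechanism in~[P3] that forces the coincidence is not the free $\lvZ_\pel$-structure of $\Div'(D)_\pel$ by itself, but the \emph{inductive} use of the residual decomposition graphs $\clG_{\clD^{\rm tot}_{K\tilde v}}$ at all generalized prime divisors $\tilde v$: relations among inertia generators coming from curves lying on a common surface (and more generally from the higher-codimension strata encoded in $\clG_{\tottrK}$) pin the $\epsilon_v$ down to a single class. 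Your characterization of geometric sets via finite generation of $\pia{1,D}$ is likewise too coarse --- any cofinite subset of a geometric set passes that test --- and the actual recipe in~[P3] again uses the residual graph structure. In short, your outline is missing precisely the input from the deeper strata of the total decomposition graph that makes the recipes of~[P3] work.
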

\begin{proof} First, the assertion~1) follows from~[P3], 
Propositions~22 and~23 (where the above facts were 
formulated in terms of geometric decomposition groups, 
rather then geometric sets of prime divisors). Second, for 
assertion~2), see~[P3], Proposition~30, etc.
\end{proof}
\vskip2pt
\noindent
%
%
%
%
D) {\it Specialization techniques\/}
\vskip5pt
We begin by recalling briefly the specialization/reduction 
results concerning projective integral/normal/smooth 
$k$-varieties and (finite) $k$-morphisms between such 
varieties, see \nmnm{M}{umford}~[Mu],~II,~\S\S\hhb{2}7--8, 
\nmnm{R}{oquette}~[Ro], and 
\nmnm{G}{rothendieck}--\nmnm{M}{urre}~[G--M]. 
We first recall the general facts and then come back with 
specifics in our situation.
We consider the following context: $k$ is an algebraically 
closed field, and $\Val k$ is the space of all the valuations 
$v$ of $k$. For $v\in\Val k$ we denote by $\clO_v\subset k$ 
its valuation ring, and let $\clO_v\to kv$, $a\mapsto\oli a$, 
be its residue field. One has the \defi{reduction map}
$\clO_v[T_1,\dots,T_N]\to kv[T_1,\dots,T_N]=:\clR_v$,
$f\mapsto\oli f$, defined by mapping each coefficient of 
$f$ to its residue in $kv$. In particular, the reduction map 
above gives rise to a reduction of the ideals $\eua\subset
k[T_1,\dots,T_n]$ to ideals $\eua_v\subset\clR_v$ defined 
by $\eua\mapsto\euA:=\eua\cap\clO_v[T_1,\dots,T_n]$
followed by $\euA\to\eua_v$, $f\mapsto\oli f$. Finally, 
we recall that $\Val k$ carries the Zariski topology 
$\tau^{_{\rm Zar}}$ which has as a basis the subsets 
of the form 
\[
\clU_A=\{v\in\Val k\mid v(a)=0\ \forall a\in A\}, \quad 
                 A\subset k\tms \ \hbox{finite subsets}.
\]
We notice the trivial valuation $v_0$ on $k$ belongs to every 
Zariski open non-empty set, thus $\tau^{_{\rm Zar}}$ is a 
prefilter on $\Val k$. We will denote by $\euD$ ultrafilters 
on $\Val k$ with $\tau^{_{\rm Zar}}\subset\euD$.  
%
%
\begin{remarks}
\label{Zarlocal}
In the above notations, we consider/remark the following:
\vskip2pt
\begin{itemize}[leftmargin=25pt]
\item[{1)}] Let $\str1k:=\prod_v kv/\euD$ be the 
\defi{ultraproduct} of $(kv)_v$, and consider the 
\defi{canonical embedding} of $k$ into $\str1k$, defined 
by $a\mapsto(a_v)/\euD$, where $a_v=\oli a$ if $a$ is 
a $v$-unit, and $a_v=0$ else. 
\vskip2pt
\item[{2)}] Let $\eua:=(f_1,\dots,f_r)\subset R:=k[T_1,\dots,T_N]$ 
be an ideal. The the fact that $\eua$ is a prime ideal, 
respectively that $V(\eua)={\rm Spec}\,R/\eua\subseteq\lvA^N_k$ 
is normal/smooth is an open condition involving the coefficients 
if $f_1,\dots, f_r$ as parameters. Therefore, there exists a 
Zariski open subset $\clU_\eua\subset\Val k$ such that for 
all $v\in\clU_\eua$ the following hold:
\vskip2pt
\begin{itemize}[leftmargin=25pt]
\item[{a)}] $f_1,\dots,f_r\in\euA$, and $\eua_v=(\oli f_1,\dots,\oli f_r)$.
\vskip2pt
\item[{b)}] If $\eua$ is prime, then so is $\eua_v=(\oli f_1,\dots,\oli f_r)$.
\vskip2pt
\item[{c)}] If $V(\eua)\hra\lvA^N_k$ is a normal/smooth $k$-subvariety,
then so is $V(\eua_v)\hra\lvA^N_{kv}$. 
\end{itemize}
\end{itemize}
\end{remarks}
\begin{definition/remark}
\label{nonstandard} 
In the context of Remark~\ref{Zarlocal}, we introduce 
notations as follows:
\vskip2pt
\begin{itemize}[leftmargin=25pt]
\item[{1)}] $\eua_*:=\prod_v\eua_v/\euD\subset
    \prod_v\clR_v=:\clR_*$
is the $\euD$-\defi{ultraproduct} of the ideals $(\eua_v)_v$. 
One has a \defi{canonical embedding} 
$\eua\hra\eua_*$ defined by $f\mapsto f\!_*$, where 
$f\!_*$ is the image of $f$ under $R\hra\clR_*$.
In particular, $\eua=\eua_*\cap R$ inside $\clR_*$. 
\vskip2pt
\item[{2)}] $\eua k_*=(f_{1*},\dots,f_{r*})\subset 
  Rk_*=k_*[T_1,\dots,T_N]$ is called the \defi{finite part} 
of $\eua_*$. Notice that $V(\eua k_*)\hra\lvA^N_{k_*}$ 
is nothing but the base change of $V(\eua)\hra\lvA^N_k$ 
under $k\hra k_*$.
\vskip2pt
\item[{3)}] Finally, if $\eua$ is a prime ideal, then so
$\eua_*$, and the canonical inclusion of $k$-algebras 
$R/\eua\hra \clR_*/\eua_*$ gives rise to inclusions of 
their quotient fields $\kappa(\eua)\hra\kappa(\eua_*)$,
and one has: The algebraic closure of $\oli{\kappa(\eua)}$ 
of $\kappa(\eua)$ and $\kappa(\eua_*)$ are linearly 
disjoint over $\kappa(\eua)$. 
\end{itemize}
\end{definition/remark}

\vskip2pt
Let $\zX$ be a projective reduced $k$-scheme, and 
$\zX={\rm Proj}\,k[T_0,\dots,T_N]/\eup\hra\lvP^N_k$
be a fixed projective embedding, where 
$\eup=(f_1,\dots,f_r)\subset k[T_0,\dots,T_N]$ is a homogeneous prime 
ideal. Then $\euP:=\eup\cap\clO_v[T_0,\dots,T_N]$ is a 
homogeneous prime ideal of $\clO_v[T_0,\dots,T_N]$ and therefore, 
$\zclX:={\rm Proj}\,\clO_v[T_0,\dots,T_N]/\euP\hra\lvP^N_{\clO_v}$
is a projective $\clO_v$-scheme, etc.
\begin{fact} 
\label{fctunu}
In the above notations,  the following hold:
\vskip2pt
\begin{itemize}[leftmargin=30pt]
\item[{1)}] $\zclX\hra\lvP^N_{\clO_v}$ is the 
scheme theoretical closure of $\zX$ under 
$\zX\hra\lvP^N_k\hra\lvP^N_{\clO_v}$, and
$\zX\hra\lvP^N_k$ is the generic fiber of
$\zclX\hra\lvP^N_{\clO_v}$. Further, the special 
fiber $\zclX_v\hra\lvP^N_{kv}$ is reduced.
\vskip2pt
\item[{2)}] If $\zX=\cup_i\,\zX_i$ with $\zX_i$ 
closed subsets, then $\zclX=\cup_i\,\zclX_i$, and 
$\zclX_v=\cup_i\zclX_{i,v}$. Further, if $\zX$ is 
connected, so is $\zclX_v$, and if $\zX$ is integral, 
then $\zclX_v$ is of pure dimension equal to~$\dim(\zX)$.
\vskip2pt
\item[{3)}]  If $\zX$ is integral and normal, then the 
local ring at any generic point $\eta_i$ of $\zclX_v$ 
is a valuation ring $\clO_{v_{\eta_i}}$ of $k(\zX)$ 
dominating $\clO_v$. 
\end{itemize}
\end{fact}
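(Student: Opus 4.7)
The common thread throughout the three parts is $\clO_v$-flatness of $\zclX$: since $\euP=\eup\cap\clO_v[T_0,\ldots,T_N]$ equals the kernel of the composite $\clO_v[T_0,\ldots,T_N]\to k[T_0,\ldots,T_N]\to k[T_0,\ldots,T_N]/\eup$, the quotient $\clO_v[T_0,\ldots,T_N]/\euP$ embeds in the domain $k[T_0,\ldots,T_N]/\eup$, hence is $\clO_v$-torsion-free, which over the valuation ring $\clO_v$ is equivalent to flatness. For Part~1, this flatness identifies $\zclX$ with the unique $\clO_v$-flat closed subscheme of $\lvP^N_{\clO_v}$ having generic fibre $\zX$, which is precisely the characterization of the scheme-theoretic closure; the equality of the generic fibre with $\zX$ is immediate from $\euP\cdot k[T_0,\ldots,T_N]=\eup$, since every element of $\eup$ becomes integral after multiplying by some non-zero scalar in $\clO_v$. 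Reducedness of $\zclX_v$ requires extra care: using that $\zX$ is geometrically reduced over the algebraically closed field $k$, one restricts to the Zariski-open subset $\clU_\eup\subset\Val k$ of Remark~2.12, where the reductions $\oli f_i$ generate $\eup_v$ and the radical structure of $\eup$ is preserved under specialization, so no nilpotents appear in $\clO_v[T]/\euP\otimes_{\clO_v}kv$.

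For Part~2, the decomposition $\zX=\cup_i\zX_i$ corresponds to $\eup=\cap_i\eup_i$, and intersecting with $\clO_v[T_0,\ldots,T_N]$ commutes with finite intersections, giving $\euP=\cap_i\euP_i$ and hence $\zclX=\cup_i\zclX_i$; base-changing to $kv$ preserves the union and yields $\zclX_v=\cup_i\zclX_{i,v}$. Connectedness of $\zclX_v$ when $\zX$ is connected follows from Zariski's connectedness theorem applied to the projective flat morphism $\zclX\to\Spec\clO_v$: after checking $H^0(\zclX,\clO_\zclX)=\clO_v$ using properness, $\clO_v$-flatness, and geometric reducedness on the generic fibre, every fibre is connected. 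Pure dimensionality of $\zclX_v$ equal to $\dim\zX$ is the standard fact that in a flat projective family over an integral base the fibre dimensions are constant.

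For Part~3, assume $\zX$ integral and normal of dimension $d$; then $\zclX$ is integral of dimension $d+1$, and by Part~2 every generic point $\eta_i$ of $\zclX_v$ has codimension $1$ in $\zclX$. The local ring $\clO_{\zclX,\eta_i}$ is thus a $1$-dimensional local domain with fraction field $k(\zX)$, which dominates $\clO_v$ by flatness. Combining normality of the generic fibre with reducedness of the special fibre (Part~1) via Serre's $R_1+S_2$ criterion shows $\zclX$ is normal at $\eta_i$; since a $1$-dimensional normal local domain is a valuation ring, this gives the desired $\clO_{v_{\eta_i}}$ dominating $\clO_v$.

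The main obstacle, as anticipated, is the reducedness of the special fibre in Part~1: flat families over a valuation ring with reduced generic fibre need not have reduced special fibre in general (a non-trivial scaling of the defining equations can introduce nilpotents upon reduction), so the argument must either exploit the Zariski-open subset $\clU_\eup\subset\Val k$ of Remark~2.12 to ensure the reduced structure is preserved under specialization, or invoke a more structural tool such as a reduced-fibre theorem after passing to a flattening modification \`a la Raynaud--Gruson. Once reducedness is secured, Parts~2 and~3 follow along the standard lines of flat-family theory sketched above.
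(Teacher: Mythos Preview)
The paper does not give a proof of this Fact at all; it is stated as background material with an umbrella reference at the beginning of subsection~D) to Mumford~[Mu], II,~\S\S7--8, Roquette~[Ro], and Grothendieck--Murre~[G--M]. So there is no ``paper's own proof'' to compare against, only the cited literature.

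Your flatness framework is the right starting point, and Parts~1--2 are handled along standard lines. You are also right to flag the reducedness of the special fibre as the delicate point: as literally stated (for \emph{every} $v\in\Val k$) it fails --- the conic $W^2-t^2UZ=0$ with $v(t)>0$ already gives a non-reduced special fibre --- so one must either read the assertion as holding on a Zariski open set of valuations (as in Remark~\ref{Zarlocal} and the subsequent Fact), or interpret $\zclX_v$ with its reduced structure, which is how Mumford and Roquette work in practice.

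Your Part~3, however, has a genuine gap that is independent of the reducedness issue. You argue that $\zclX$ has dimension $d+1$, that $\eta_i$ has codimension~$1$, and then invoke Serre's $R_1+S_2$ criterion. Both steps fail over a general valuation ring: the valuations in $\Val k$ can have arbitrary rank, so $\Spec\clO_v$ need not be one-dimensional and $\clO_{\zclX,\eta_i}$ need not be one-dimensional; moreover $\clO_v$ (and hence $\zclX$) is typically non-Noetherian, so Serre's criterion is not available. The argument in the cited references is genuinely valuation-theoretic rather than dimension-theoretic: one takes a valuation $w$ of $k(\zX)$ dominating $\clO_{\zclX,\eta_i}$, observes that $w|_k=v$ and that the residue field $\kappa(\eta_i)$ already has transcendence degree $\dim\zX$ over $kv$ (by Part~2), so the Abhyankar inequality forces $Kw$ to be algebraic over $\kappa(\eta_i)$; normality of $\zX$ is then used to conclude that $\clO_{\zclX,\eta_i}=\clO_w$. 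No Noetherian or codimension-one hypotheses enter.
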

%
%
\begin{definition}
$\hhb1$
\vskip2pt
\begin{itemize}[leftmargin=30pt]
\item[{\rm 1)}] The valuation $v_{\eta_i}$ with valuation
ring $\clO_{v_{\eta_i}}$ introduced at Fact~\ref{fctunu},~c),
above is called a \defi{Deuring constant reduction} of
$k(\zX)$ at $v$.
\vskip2pt
\item[{\rm 2)}] 
The $v$-\defi{reduction/specialization map} for closed 
subsets of $\zX$ is defined by
\[
\spsp_v:\{S\subseteq\zX\mid S \ \hbox{closed}\}\to
\{\clS'_v\subseteq\zclX_v\mid \clS'_v \ \hbox{closed}\}, \ \
S\mapsto\clS_v.
\] 
\item[{\rm 3)}] In particular, if $P\subset \zX$ is a prime 
Weil divisor, then $\clP\subset\zclX$ is a relative 
Weil divisor of $\zclX$. Finally, $\spsp_v$ gives rise to a 
\defi{Weil divisor reduction/specialization homomorphism}
\[
\spsp_v:\Div(\zX)\to\Div(\zclX_v),\quad 
   P\mapsto{\textstyle\sum}_i\,\clP_{v,i}
\] 
where $\clP_{v,i}$ are the irreducible components 
of the special fiber $\clP_v$ of $\clP$.
\end{itemize}
\end{definition}
\begin{fact}
\label{fctdoi}
There exists a Zariski open nonempty set $\clU\subset\Val k$ 
such that each $v\in\clU$ satisfies:
\vskip2pt
\begin{itemize}[leftmargin=30pt]
\item[{a)}] If $\zX$ is integral/normal/smooth, then so are
$\zclX$ and $\zclX_v$. In particular, if 
$\zX$ is integral and normal, then $\zclX_v$ is integral 
and normal, and $v_{\eta_v}$ is called the \defi{canonical 
(Deuring) constant reduction} of $k(\zX)$ at $v$, an we 
denote it by $\vcr{\hp k(\zX)}$.
\vskip2pt
\item[{b)}] If $S_1,\dots,S_r\subseteq\zX$ are distinct  
closed subsets of $\zX$, then $\spsp_v(S_1),\dots,\spsp_v(S_r)$ 
are distinct closed subsets of $\zclX_v$. And if $\clP=\cup_i\, P_i$ 
is a NCD (normal crossings divisor) in~$\zX$, then so is 
$\clP_v=\cup_i\, \spsp_v(P_i)$ in $\zclX_v$.
\vskip2pt
\item[{c)}] $\spsp_v:\Div(\zX)\to\Div(\zclX_v)$ is compatible 
with principal divisors, and therefore gives rise to a 
\defi{reduction/specialization homomorphism} 
$\spsp_v:\euCl(\zX)\to\euCl(\zclX_v)$.
\vskip2pt
\item[{d)}] If $H\subset\lvP^N_k$ is a general hyperplane,
then $\clH\subset\lvP^N_{\clO_v}$ is a general relative 
hyperplane and $\clH_v\subset\lvP^N_{kv}$ is a general 
hyperplane, and $(\clH\cap\zclX)_v=\clH_v\cap\zclX_v$
inside $\lvP^N_{kv}$.
\vskip2pt
\item[{e)}] Let $\lvG_m(\zX)\subset k(\zX)$ be the group of
invertible global section on $\zX$, and $\lvG_m(\zclX_v)$ be 
correspondingly defined. Then $\lvG_m(\zX)/k\tms\to\lvG_m(\zclX_v)/kv\tms$
is an isomorphism.
\end{itemize}
\end{fact}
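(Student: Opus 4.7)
The proof will follow the standard pattern of establishing each of the five properties (a)--(e) as a Zariski open condition on $\Val k$ and then intersecting the finitely many resulting open subsets. The underlying machinery --- constant reductions of projective varieties, good reduction of normal/smooth models, compatibility of divisors with specialization --- has been developed in \nmnm{M}{umford}~[Mu], \nmnm{R}{oquette}~[Ro], and \nmnm{G}{rothendieck}--\nmnm{M}{urre}~[G--M]; the present Fact just packages these results into the form needed later.

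First I would fix, once and for all, the finitely many pieces of data appearing implicitly in the statement: a homogeneous generating set $f_1,\dots,f_r$ of the prime ideal $\eup$ cutting out $\zX \hra \lvP^N_k$; homogeneous generating sets for the defining ideals of the closed subsets $S_1,\dots,S_r$ in (b), together with local equations for each component of the NCD; a finite system of rational functions on $\zX$ whose divisors generate the image of $\dvv$ on $\lvG_m(\zX)/k\tms$ and more generally the group of principal divisors needed for (c); coefficients of the defining linear form of the "generic" hyperplane $H$ in (d); and a finite generating set of the (finitely generated free abelian) group $\lvG_m(\zX)/k\tms$ for (e). Each such piece of data involves only finitely many elements $a_1,\dots,a_M \in k$. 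Let $\clU_0 \subset \Val k$ be the Zariski open set where all these $a_j$ and all nonzero differences/products required below are $v$-units; on $\clU_0$ the reduction map is defined on all the data simultaneously.

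For each of (a)--(e) separately I would then argue that the bad locus inside $\clU_0$ is cut out by the vanishing of finitely many polynomial expressions in the $a_j$, hence is Zariski closed. For (a), integrality of $\eup_v$ (hence of $\zclX$ and of $\zclX_v$ by Fact~\ref{fctunu}, together with the fact that $\clO_v$ is a valuation ring, hence $\zclX \to \Spec\,\clO_v$ is flat), and the Jacobian/Serre criteria for normality and smoothness, each translate to the non-vanishing of specific minors and discriminants in the $f_i$ and their derivatives, which is Zariski open in $v$; see \nmnm{G}{rothendieck}--\nmnm{M}{urre}~[G--M]. For (b) one applies (a) pairwise to the $S_i$ to see their $\spsp_v$-images remain pairwise distinct, and reads the NCD condition as transversality of the component hypersurfaces at each multi-intersection, again controlled by non-vanishing of minors of a Jacobian matrix. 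For (c), once (a) gives good reduction of $\zX$, compatibility of $\spsp_v$ with principal divisors amounts to saying that for each rational function $g$ in our finite generating system, $\dvv(\oli g)$ on $\zclX_v$ equals $\spsp_v\,\dvv(g)$; this follows from the corresponding identity over $\clO_v$ on the flat model $\zclX$, valid outside a Zariski closed locus where $g$ or $g^{-1}$ develop denominators at a codimension $1$ component of the special fibre. For (d), Bertini applied to the total space $\zclX \hra \lvP^N_{\clO_v}$ shows that a sufficiently general hyperplane $H$ remains general on both fibres, and $(\clH \cap \zclX)_v = \clH_v \cap \zclX_v$ holds as schemes once the intersection is proper on both fibres. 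Finally (e) uses that since $\zX$ is projective and integral, $\lvG_m(\zX)/k\tms$ is free of finite rank; for $v$ in which our chosen generators $u_1,\dots,u_s$ and their inverses are all $v$-units and their reductions $\oli u_i$ are non-constant exactly when $u_i$ is, the reductions generate $\lvG_m(\zclX_v)/kv\tms$, and the map is injective by reading off the divisors via (c).

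The set $\clU$ is then the intersection of the finitely many Zariski open loci produced in (a)--(e), which is again Zariski open and contains the trivial valuation $v_0$, hence is nonempty. The main obstacle I anticipate is (c): good reduction of the scheme is not quite enough, because one must also ensure that no principal divisor acquires extra components supported on the special fibre or loses components under specialization. This is controlled exactly by combining (a) with the divisorial compatibility of the canonical Deuring reduction $\vcr{\hp k(\zX)}$ introduced in (a), together with \nmnm{R}{oquette}'s [Ro] refinement of Deuring's theory; once one has verified that the relative Cartier divisor defined by $g$ on $\zclX$ has special fibre equal to the Weil divisor of $\oli g$, the remaining assertions (d), (e) follow with essentially no extra work.
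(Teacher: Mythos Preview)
The paper does not actually prove Fact~\ref{fctdoi}: it is stated as a ``Fact'' in the literal sense, part of a list of specialization results that the paper explicitly says it is ``recalling briefly\dots see \nmnm{M}{umford}~[Mu],~II,~\S\S7--8, \nmnm{R}{oquette}~[Ro], and \nmnm{G}{rothendieck}--\nmnm{M}{urre}~[G--M]'' with no accompanying argument. So there is no in-paper proof to compare against; your proposal is effectively supplying what the paper outsources to those references.

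That said, your sketch is in the right spirit and matches the standard approach those references take: each of (a)--(e) is a first-order condition on finitely many coefficients, hence Zariski open on $\Val k$, and one intersects. Two small remarks. First, in the context where the Fact is stated, $\zX$ is \emph{projective} integral over the algebraically closed field $k$, so $\lvG_m(\zX)=k^\times$ and (e) is the trivial isomorphism of trivial groups; your finite-rank-generators argument is not wrong but is more than is needed here (the nontrivial version of (e) matters only later when the paper passes to quasi-projective opens $\bsl X S$). Second, your treatment of (c) correctly identifies the real content --- that the relative Cartier divisor of $g$ on the flat model $\zclX$ specializes to $\dvv(\oli g)$ --- and correctly attributes the mechanism to the Deuring/Roquette constant-reduction theory; this is exactly what [Ro] provides.
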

In the above context and notations, let $\zY\to\zX$
be a dominant morphism of projective integral 
normal $k$-varieties, which is generically finite, and
$K:=k(\zX)\hra k(\zY)=:L$ be the corresponding
finite field extension. Let $\wtl L|L$ be a finite
Galois extension with $[\wtl L:L]$ relatively prime to
$[L:K]$, and $\wtl K|K$ be the maximal Galois
subextension of $K\hra \wtl L$ having degree
$[\wtl K:K]$ relatively prime to $[L:K]$. Then
the canonical projection of Galois gropups 
$H:={\rm Gal}(\wtl L|L)\to
{\rm gal}(\wtl K|K)=:G$ is surjective. Further, let
$\wtl\zX\to\zX$ and $\wtl\zY\to\zY$ be the normalizations
of $\zX$ in $K\hra\wtl K$, respectively of $\zY$ 
in $L\hra\wtl L$. Then by the universal property
of normalization, $\wtl\zY\to\zY\to\zX$ factors as
$\wtl\zY\to\wtl\zX\to\zX$. 
\vskip2pt
Finally, let $\zX\hra\lvP^N_k$, etc., be projective 
$k$-embeddings, and consider the corresponding 
projective $\clO_v$-schemes $\zX\hra\zclX$, etc.,
as defined above. Then by general scheme theoretical 
non-sense, $\zY\to\zX$, etc., give rise to dominant
rational $\clO_v$-maps $\zclY\ratmap\zclX$, etc.
%
%
%
%
\begin{fact}
\label{fcttrei} 
There exists a Zariski open nonempty set $\clU\subset\Val k$ 
such that each $v\in\clU$ satisfies:
\vskip2pt
\begin{itemize}[leftmargin=30pt]
\item[{a)}] The rational maps $\clO_v$-maps 
$\zclY\ratmap\zclX$ above are actually morphisms 
of $\clO_v$-schmes, and one has commutative 
diagrams of morphisms of projective $\clO_v$-schemes
\[
\begin{matrix}
\wtl\zY\hhb{7}\to\hhb{7}\zY 
       &\hhb{20}&\wtl\zclY\hhb{7}\to\hhb{7}\zclY  
                &\hhb{20}&\wtl\zclY_v\hhb{7}\to\hhb{7}\zclY_v  \cr
\downarrow\hhb{33}\downarrow&&\downarrow
      \hhb{35}\downarrow&&\downarrow\hhb{37}\downarrow \cr 
\wtl\zX\hhb{9}\to\hhb{9}\zX 
       &\hhb{20}&\wtl\zclX\hhb{11}\to\hhb{10}\zclX  
                &\hhb{20}&\hhb3\wtl\zclX_v\hhb{9}\to\hhb{10}\zclX_v  \cr
\end{matrix}
\]
where the LHS diagrams is the generic fiber of the middle one, 
respectively RHS diagrams is the special fiber of the middle one.
\vskip2pt
\item[{b)}] Moreover, the degrees of the morphism which
correspond to each other in the above diagrams are 
equal. Thus the canonical constant reductions $\vcr{L}$, 
$\vcr{\tl K}$, $\vcr{\tl L}$ are the unique prolongations 
of $\vcr{K}$ to $L$, $\tl K$, and $\tl L$, respectively. 
\vskip2pt
\item[{c)}] The residue field extension 
$\tl K\!\vcr{\tl K}|K\!\vcr{K}$ is the maximal Galois subextension 
of $\tl L\vcr{\tl L}|L\vcr{L}$ which has degree relatively prime 
to $[L:K]=[L\vcr L|K\!\vcr K]$.
\end{itemize}
\end{fact}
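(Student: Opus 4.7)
The plan is to establish parts (a)--(c) by shrinking the Zariski open set $\clU \subset \Val k$ provided by Fact~\ref{fctunu} and Fact~\ref{fctdoi} (on which $\zclX$, $\zclY$, $\wtl\zclX$, $\wtl\zclY$ and all their special fibers are integral projective normal schemes) so as to force, successively, the extension of generic-fiber morphisms, flatness of finite covers, and the vanishing of inertia in the constant reductions. As noted in the Fact itself, each shrinking preserves nonemptiness because the trivial valuation of $k$ is a point of every Zariski open subset of $\Val k$.

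For part (a), I would show that the rational $\clO_v$-maps $\zclY \ratmap \zclX$, $\wtl\zclX \ratmap \zclX$, $\wtl\zclY \ratmap \zclY$ and $\wtl\zclY \ratmap \wtl\zclX$ extend to morphisms by the standard graph-closure argument: inside $\zclY \times_{\clO_v} \zclX$ take the scheme-theoretic closure $\Gamma$ of the graph of $\zY \to \zX$; then $\Gamma \to \zclY$ is proper and birational, and for $v \in \clU$ (possibly shrunk) the target $\zclY$ is normal, so Zariski's Main Theorem forces the finite part of $\Gamma \to \zclY$ to be an isomorphism, whence the morphism exists globally. For the finite covers coming from normalization in $\wtl K|K$ and $\wtl L|L$, the same argument applies together with the universal property of normalization, so $\wtl\zclX \to \zclX$ and $\wtl\zclY \to \zclY$ become finite morphisms of $\clO_v$-schemes; generic flatness applied over the Dedekind-like slice $\Spec \clO_v$, combined with the normality of base and the integrality of the fibers, upgrades them to finite flat covers (again by further shrinking $\clU$).

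For part (b), once the commutative diagram of finite flat morphisms exists, the degree is locally constant on $\Spec\clO_v$, hence the degrees of the generic and special fibers of each vertical arrow agree with the degree of the $\clO_v$-morphism. Integrality of the special fibers $\zclY_v$, $\wtl\zclX_v$, $\wtl\zclY_v$ (which holds on $\clU$) means each has a unique generic point, so each of $\vcr{L}$, $\vcr{\tl K}$, $\vcr{\tl L}$ is the \emph{unique} prolongation of $\vcr{K}$ to $L$, $\tl K$, $\tl L$ respectively, by Fact~\ref{fctunu}.

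For part (c), the main point is to identify the Galois theory after reduction. Surjectivity of $H = \Gal(\wtl L|L) \to \Gal(\wtl K|K) = G$ is preserved automatically, since the extension $\tl K \vcr{\tl K}|K \vcr K$ sits inside $\tl L \vcr{\tl L}|L \vcr L$ via the compatible reductions. By further shrinking $\clU$ I would ensure that both $\wtl L|L$ and $\wtl K|K$ are unramified at the (unique) prolongations of $\vcr L$, $\vcr K$, so that the canonical surjections $H \to \Gal(\wtl L\vcr{\tl L}|L\vcr L)$ and $G \to \Gal(\wtl K \vcr{\tl K}|K \vcr K)$ are in fact isomorphisms; this is possible because inertia at a generic valuation of $k$ is trivial for the finitely many prime-to-$\chr(k)$ extensions at hand. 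Combining with part (b), the degree $[\tl L\vcr{\tl L} : L\vcr L] = [\tl L : L]$ is coprime to $[L\vcr L : K\vcr K] = [L:K]$, while any strictly larger Galois subextension of $\tl L \vcr{\tl L}|L\vcr L$ contained in the compositum with $\tl K\vcr{\tl K}$ would violate the maximality of $\tl K$ inside $\tl L$ by lifting back through the isomorphism of Galois groups; hence $\tl K\vcr{\tl K}|K\vcr K$ is the claimed maximal prime-to-$[L:K]$ Galois subextension. The main obstacle is the bookkeeping for part~(c): one must be sure that the prime-to-$[L:K]$ condition is preserved under reduction and that the unramifiedness statements hold uniformly on some Zariski open, not merely pointwise.
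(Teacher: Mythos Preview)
The paper does not give a proof of this Fact; it is recalled from \nmnm{M}{umford}~[Mu],~II,~\S\S7--8, \nmnm{R}{oquette}~[Ro], and \nmnm{G}{rothendieck}--\nmnm{M}{urre}~[G--M], so there is no in-text argument to compare against. Your overall architecture---successively shrink $\clU$ to secure extension of morphisms, then degree preservation, then the Galois identification---matches those references in spirit, but two of your steps do not go through as written.

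In part~(a), the graph-closure/Zariski's Main Theorem argument has a gap. From $\Gamma\to\zclY$ proper birational with normal target you only get, via Stein factorization, that the finite part $\Gamma'\to\zclY$ is an isomorphism, i.e.\ that $\Gamma\to\zclY$ has connected fibers; this does \emph{not} force $\Gamma\to\zclY$ itself to be an isomorphism (blow-ups are proper birational with normal target). No amount of shrinking $\clU$ to make $\zclY$ normal cures this. The argument the cited references actually use, and which the paper's Remark~\ref{Zarlocal} is set up to support, is \emph{spreading out}: the $k$-morphism $\zY\to\zX$ is described on affine charts by finitely many polynomial identities in finitely many coefficients, hence is defined over some finitely generated $\lvZ$-subalgebra $R\subset k$; for every $v$ with $R\subset\clO_v$ (a Zariski-open condition on $\Val k$) base change yields the desired $\clO_v$-morphism $\zclY\to\zclX$, and likewise for the other arrows.

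In part~(b), the claim that one can upgrade $\wtl\zclX\to\zclX$ to a finite \emph{flat} cover by shrinking $\clU$ is false in general: already over $k$, normalizations of normal varieties in finite extensions are typically not flat, and this is not a condition on $v$. What does hold, and suffices, is that $\zclX$ and $\wtl\zclX$ are each flat over $\Spec\clO_v$ (they are torsion-free over a valuation ring) with integral special fibers on $\clU$; constancy of Hilbert polynomials over $\Spec\clO_v$ with respect to the pulled-back $\clO(1)$ then gives $\deg(\wtl\zclX_v\to\zclX_v)=\deg(\wtl\zX\to\zX)=[\tl K:K]$, and similarly for the other covers. Alternatively one argues directly by spreading out the finite $\clO_{\zX}$-algebra structure. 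Your outline for~(c) is fine once~(b) is in place; the unramifiedness you invoke is exactly the content of the degree equalities in~(b), together with separability of the residue extensions, which again is a Zariski-open condition on $v$ via the nonvanishing of a discriminant.
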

\begin{fact}
\label{fctpatru}
Let $\zY$ be a projective smooth $k$-variety, and 
$T' \subset \zY$ be either empty, or the support 
of a normal crossings divisor in $\zY\!$, and set
$\zclY':=\bsl{\zY\hhb1}{T'}$. For $v\in\Val k$ define 
correspondingly $\clT'\hra\zclY$, $\clT'_v\hra\zclY_v$,
thus $\zclY'=\bsl{\zclY\hhb1}{\clT'}$, 
$\zclY'_v=\bsl{\zclY_v}{\clT'_v}$,
and let $\DD{\zY'}$ and $\DD{\zclY'_v}$ be the 
corresponding geometric sets of Weil prime divisors. 
Then there exists a Zariski open nonempty set 
$\clU\subset\Val k$ such that for $v\in\clU$ the
special fiber $\zclY_v$ is smooth, $\clT'_v$ either
empty of a NCD, and the following hold: 
First, by the purity of the branch locus, and second, by 
\nmnm{G}{rothendick}--\nmnm{M}{urre}~[G--M] 
applied to $\zclY'_v$, the canonical maps of 
pro-$\ell$ abelian fundamental groups below 
are isomorphisms:
\[
\pia{1,D_{\zY'}} \to\Pi_1(\zY')\to\Pi_1(\zclY')\ot
  \Pi_1(\zclY'_v)\ot\pia{1,D_{\zclY'_v}}.
\] 
\end{fact}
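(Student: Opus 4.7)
The plan is to extract an appropriate Zariski open set of valuations $\clU \subset \Val k$ on which both the generic and special geometry are well behaved, and then build the chain of isomorphisms as two applications of the purity of the branch locus sandwiching two applications of Grothendieck--Murre specialization.

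First I would apply Facts~\ref{fctunu}, \ref{fctdoi}, \ref{fcttrei} simultaneously to the pair $(\zY,T')$, where $T'$ is either empty or the support of a NCD. This yields a Zariski open nonempty $\clU\subset\Val k$ such that for every $v\in\clU$ the scheme-theoretic closure $\zclY\hra\lvP^N_{\clO_v}$ is smooth and proper over $\Spec\clO_v$, with smooth integral generic fiber $\zY$ and smooth integral special fiber $\zclY_v$, and so that the closure $\clT'\subset\zclY$ is either empty or a relative NCD whose special fiber $\clT'_v$ is either empty or a NCD in $\zclY_v$. Consequently $\zclY'=\bsl{\zclY}{\clT'}$ is a smooth $\clO_v$-scheme whose generic and special fibers are exactly $\zY'$ and $\zclY'_v$. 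I would further shrink $\clU$ so that $v(\ell)=0$ (this is a Zariski open condition containing the trivial valuation, hence lies in $\tau^{_{\rm Zar}}$), which guarantees $\chr(kv)\neq\ell$ as required by the pro-$\ell$ specialization theorems.

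For the two outer arrows I would invoke the purity of the branch locus. Since $\zY'$ and $\zclY'_v$ are both smooth, every abelian $\ell$-power étale cover is classified by the pro-$\ell$ abelian fundamental group, and, conversely, every abelian pro-$\ell$ extension of the function field that is unramified along every Weil prime divisor in $\DD{\zY'}$ (resp.\ $\DD{\zclY'_v}$) extends uniquely to an étale cover. This is exactly the argument already invoked in Subsection~C to identify $\pia{1,\DD X}$ with $\Pi_1(X)$ for smooth $X$ (see also [P3], Appendix, 7.3), and it gives the two outer isomorphisms $\pia{1,D_{\zY'}}\cong\Pi_1(\zY')$ and $\pia{1,D_{\zclY'_v}}\cong\Pi_1(\zclY'_v)$.

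For the two middle arrows I would apply the Grothendieck--Murre specialization theorem [G--M] to the smooth proper $\clO_v$-scheme $\zclY$ together with the relative tame divisor $\clT'$. That theorem states that for such a pair with $\chr(kv)\neq\ell$, the pro-$\ell$ tame fundamental groups of $\zclY'$, of its generic fiber $\zY'$, and of its special fiber $\zclY'_v$ are all canonically isomorphic via the obvious specialization maps. Restricting to pro-$\ell$ abelian quotients yields the two isomorphisms $\Pi_1(\zY')\cong\Pi_1(\zclY')\cong\Pi_1(\zclY'_v)$, with the specialization being the one built into the reduction construction, so compatibility of all four arrows in the displayed chain is automatic.

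The main obstacle, largely already packaged inside [G--M], is verifying that the hypotheses of the specialization theorem hold uniformly on a Zariski open $\clU\subset\Val k$ rather than only for one Henselization at a time. This is handled exactly as in Facts~\ref{fctunu}--\ref{fcttrei}: smoothness, the NCD property, and the relative purity of $\clT'$ are open conditions on the defining equations, so they are inherited by all $v$ in a common Zariski open set, and after possibly passing to the Henselization of $\clO_v$ (which does not change the pro-$\ell$ fundamental groups involved) one is in the exact setting of [G--M]. Once this uniformity is in place, the four isomorphisms compose to give the chain in the statement.
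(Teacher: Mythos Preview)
Your proposal is correct and follows exactly the approach the paper indicates: the paper treats this as a Fact with the justification embedded in the statement itself (purity of the branch locus for the two outer arrows, Grothendieck--Murre specialization for the two middle ones), and you have simply fleshed out that sketch, including the useful remark that one should shrink $\clU$ to ensure $v(\ell)=0$ so that $\chr(kv)\neq\ell$.
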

We now come back to the context of 
Theorem~\ref{mainthm}. Recall the notations and the 
context from~Preparation/Notations~\ref{prepfct}, and
the $k$-morphisms $\XX\to X_0\ot\YY$, $\ZZ\to\XX$,
further the closed subset $S_0\subset X_0$ and the 
algebraic set of prime divisors $D=\DD{\bsl{X_0}{S_0}}$,
and finally, the preimages $S\to S_0\ot T$ of $S_0$ 
under $\XX\to X_0\ot\YY$. Finally, by Proposition~\ref{Pivspi}, 
one has canonical identifications, respectively a surjective 
canonical projection as below:
\[
\pia{1,D_{\bsl X{S}}}=\Pi_1(\bsl X{S})
\to \Pi_1(X)=\pia{1,\DD X}=\pia{1,K}.
\] 
After choosing projective embeddings for each of the
$k$-varieties $X_0$, $X$, $Y\!$, $Z$, we get for every 
$v\in\Val k$ the corresponding $\clO_v$-schemes 
$X_0\hra \clX_0 \hla \clX_{0v}$, etc., and for every of 
the $k$-morphisms above, we get corresponding 
dominant rational $\clO_v$-maps, etc. Then applying~Facts
\ref{fctunu}--\ref{fctpatru}, one has the following:
\begin{fact/nota}
\label{factnotations}
There exists a Zariski open subset $\clU\subset\Val k$ 
of valuations $v$ satisfying the following: The special
fiber of each of the above $k$-varieties is irreducible
and normal/smooth if the generic fiber was so. Further,  
all the dominant rational $\clO_v$-maps under discussion above 
are actually $\clO_v$-morphisms such that the degrees 
of the generic fiber $k$-morphisms and the corresponding
special fiber $kv$-morphisms are equal. In particular,
the canonical constant reductions $\vcr L$ and $\vcr M$
are the unique prolongations of $\vcr K$ to $L$, 
respectively~$M$. Finally, let $S'_0\subset X_0$ be either empty 
or equal to $S_0$, hence its preimages $S'\subset X$ 
and $T'\subset Y$ are either empty or equal to $S$, 
respectively $T$. We set $X'_0:=\bsl{X_0}{S'_0}$,
$X':=\bsl X{S'}$, $Y':=\bsl Y{T'}$, and for $v \in \clU$ 
consider the corresponding $X_0\to\clX_0\ot\clX_{0v}$, 
$S'_0\to\clS'_0\ot\clS'_{0v}$ and the resulting 
$X'_0\to\clX'_0\ot\clX'_{0v}$, etc. Then the resulting 
$kv$-morphisms satisfy the following: 
\vskip2pt
\begin{itemize}[leftmargin=30pt]
\item[{a)}] $\clS_v\subset\clX_v$, $\clT_v\subset\clY_v$ are the 
preimages of $\clS_{0v}$ under $\clX_v\to\clX_{0v}\ot\clY_v$,  
and the restriction $\clD_0$ of $\DD{\clY_v}$ to $K\hm1\vcr K$ 
satisfies $\DD{\clX_{0v}}\subseteq\clD_0\subseteq\DD{\clX_v}$.
\vskip2pt
\item[{b)}] Further, setting $\clD:=\DD{\bsl{\clX_{0v}}{\clS_{0v}}}$,
one has that $\clD\subseteq\spsp_v(D)$, and further:
\vskip2pt
\begin{itemize}[leftmargin=10pt] 
\item[{-}] $\clY_v\to\clX_{0v}$ is a prime to $\ell$ 
alteration above $\clS_{0v}$, hence $\clT_v$
is NCD in $\clY_v$. 
\vskip2pt
\item[{-}] $\clX_v\to \clX_{0v}$ are projective normal 
models of $K\hm1\vcr K|kv$, and  
$\clD\subseteq\clD_0\subseteq\DD{\clX_v}$. 
\vskip2pt
\item[{-}] $\clZ_v\to\clX_v$ is a finite generically normal
alteration with ${\rm Aut}(M\hm1\vcr M|K\hm1\vcr K)={\rm Aut}(M|K)$.
\end{itemize}
\vskip2pt
\item[{c)}] Applying Proposition~\ref{Pivspi} to the fibers, one has:
\vskip2pt
\begin{itemize}[leftmargin=10pt]
\item[{-}] The canonical maps $\pia{1,\DD{X'}}\to\Pi_1(X')$
and $\pia{1,\DD{\clX'_v}}\to\Pi_1(\clX'_v)$ 
are isomorphisms. 
\vskip0pt
\item[{-}] $\pia{1,\DD{X}}\to\Pi_1(X)\to\pia{1,K}$
and $\pia{1,\DD{\clX_v}}\to\Pi_1(\clX_v)\to\pia{1,K\hm1\vcr K}$ 
are isomorphisms. 
\end{itemize}
\vskip2pt
\item[{d)}] By the functoriality of fundamental group
and Fact~\ref{fctpatru} one has:
\vskip2pt
\begin{itemize}[leftmargin=10pt]
\item[{-}] $X'\hra\clX'\hla\clX'_v$ give rise 
to surjective projections 
$\Pi_1(X')\to\Pi_1(\clX')\ot\Pi_1(\clX'_v)$.
\vskip2pt
\item[{-}] $Y'\hra\clY'\hla\clY'_v$ give 
rise to canonical isomorphisms 
$\Pi_1(Y')\to\Pi_1(\clY')\ot\Pi_1(\clY'_v)$.
\end{itemize}
\end{itemize}
\end{fact/nota}
\begin{prop}
\label{lemaunu}
The canonical maps
$\Pi_1(X')\to\Pi_1(\clX')\ot\Pi_1(\clX'_v)$ 
are isomorphisms.
\end{prop}
\begin{proof} We consider the
canonical commutative diagram of $\clO_v$-morphism, 
and the corresponding maps of fundamental groups:
\vskip-5pt
\[
\begin{matrix}
&\hhb{10}Y'&\hra\hhb7\clY'\hhb7\hla&\clY'_v 
&\hhb{30}&&
\Pi_1(Y')&\to\hhb7\Pi_1(\clY')\hhb7\ot&\Pi_1(\clY'_v)
\cr
(*)\hhb{-10}&\hhb{10}\downarrow & \downarrow\hhb{-2} &\downarrow\hhb{-1} 
&&(\dag)&      
\downarrow &       \downarrow\hhb{-2} &\downarrow  \hhb{-1} 
\cr 
&\hhb{10}X'&\hra\hhb7\clX'\hhb7\hla&\clX'_v  
&&&
\Pi_1(X')&\to\hhb7\Pi_1(\clX')\hhb7\ot&\Pi_1(\clX'_v)
\cr
\end{matrix}
\]
\vskip2pt
\noindent
The homomorphisms in these diagrams satisfy:
First, since the vertical morphism in the diagram~$(*)$
are finite, having degree equal to $[L:K]$, the 
vertical homomorphisms in the diagram~$(\dag)$ have 
open images of index dividing $[L:K]$. Thus since 
$[L:K]$ is prime to~$\ell$, it follows that the vertical 
maps in the diagram~$(\dag)$ are actually surjective.
Further, the morphisms in the first row of the  
diagram~$(\dag)$ are isomorphisms, and those of 
the second row are surjective 
by~Fact/Notations~\ref{factnotations},~d). We have to
prove that the homomorphisms in the second 
row of the diagram~$(\dag)$ are isomorphisms too. 
\vskip5pt
{\bf Claim 1.} {\it $\Pi_1(X')\to\Pi_1(\clX')$ is injective.\/}
\vskip3pt
\noindent
Indeed, let $\tl X\to X'$ any finite abelian $\ell$-power
degree etale cover of $X'$, $\tl K=k(\tl X)$ its function
field, and $\tl\clX\to\clX'$ the normalization of $\clX'$
in $K\hra\tl K$. We claim that $\tl\clX\to\clX'$ is etale.
Since being etale is an open condition, it is sufficient
to prove that the cover of the special fiber $\tl\clX_v\to\clX'$
is etale. (Notice that we do not know yet whether $\tl\clX_v$
is reduced.) Let $\tl L:=L\tl K$, and $\tl Y\to Y'$ be 
the normalization of $Y'$ in $L\hra\tl L$. Since 
$\Pi_1(Y')\to\Pi_1(X')$ is surjective, and $\tl X\to X'$ 
is etale, it follows that ${\rm Gal}(\tl L|L)$ is a quotient 
of $\Pi_1(Y')$. Thus the normalization $\tl\clY\to\clY'$ 
of $\clY'$ in $L\hra\tl L$ is etale. Further, since the 
morphisms in the first row of the~diagram~$(\dag)$ 
are isomorphisms, it follows that the corresponding 
$\tl\clY\to\clY'$, $\tl\clY_v\to\clY'_v$ are etale covers 
of integral $\clO_v$-schemes satisfying
$\deg(\tl Y\to Y)=\deg(\tl\clY_v\to\clY_v)$. In particular,
the canonical constant reduction $\vcr L$ has a unique
prolongation to $\vcr{\tl L}$ to $\tl L$, and $\vcr{\tl L}|\vcr L$
is totally inert. Since $\vcr L|\vcr K$ is totally inert by
the fact that $v\in\clU$, it follows that $\vcr{\tl L}|\vcr K$ 
is totally inert as well. Thus $\vcr K$ has a unique 
prolongation $\vcr{\tl K}$ to $\tl K$ as well, and 
$\vcr{\tl K}|\vcr K$ is totally inert, i.e., ${\rm Gal}(\tl K|K)$
equals the decomposition group above $\vcr K$, and
the inertia group above $\vcr K$ is trivial. Equivalently,
$\tl\clX_v$ is integral, and $K\!\vcr K=\kappa(\clX'_v)\hra
\kappa(\tl\clX_v)=\tl K\vcr{\tl K}$ is Galois extension
with ${\rm Gal}(\tl K\vcr{\tl K}|K\!\vcr K)={\rm Gal}(\tl K|K)$.
Hence we have the following: $\tl K\!\vcr{\tl K}|K\vcr K$
is an abelian $\ell$-power degree extension such 
that $\tl L\vcr{\tl L}$ is the compositum 
$\tl L\vcr{\tl L}=L\vcr L \tl K\!\vcr{\tl K}$, and 
$\tl\clY_v\to\clY'_v$ is etale. By~Proposition~\ref{Pivspi}, 
it follows that that $\tl\clX_v\to\clX'_v$ is etale as well.
Thus $\tl\clX\to\clX'$ is etale, and~Claim~1 is proved.
Conclude that $\Pi_1(X')\to\Pi_1(\clX')$ is an isomorphism. 
\vskip4pt
{\bf Claim 2.} {\it $\Pi_1(\clX'_v)\to\Pi_1(\clX')$ is injective.\/} 
\vskip4pt
\noindent
In order to prove the claim, via the canonical
isomorphisms $\Pi_1(Y')\to \Pi_1(\clY')\ot\Pi_1(\clY'_v)$, 
we can identify these groups with a finite $\Zell$-module
$\Pi$, thus the canonical surjective projections
$\Pi\to\Pi_1(\clX'_v)\to\Pi_1(\clX')$ are defined as
quotients of $\Pi$ by  
$\Delta:=\ker\big(\Pi\to\Pi_1(\clX')\big)$ and 
$\Delta_v:=\ker\big(\Pi\to\Pi_1(\clX'_v)\big)$, which are 
finite $\Zell$-submodules of $\Pi$. The following 
conditions are obviously equivalent:
\vskip2pt
\begin{itemize}
\item[{i)}] $\Pi_1(\clX'_v)\to\Pi_1(\clX')$ is an isomorphism
\vskip2pt
\item[{ii)}] $\Delta_v=\Delta$
\vskip2pt
\item[{iii)}] $\ell\Delta+\Delta_v=\Delta$
\end{itemize}
where iii) $\Rightarrow$ ii) follows by Nakayama's Lemma,
because $\Delta\subseteq\Pi_1(Y)$ is a finite $\Zell$-module.
On the other hand, since $\Delta$ is a finite $\Zell$-module, 
there exist only finitely many subgroups $\Sigma$ such that 
$\ell\Delta\subseteq\Sigma\subseteq\Delta$. Thus for all 
sufficiently large $\ell^e\!$ [precisely, for $\Pi\to\oli\Pi:=\Pi/\ell^e$, 
one must have that $\Delta/\ell\to\oli\Delta/\ell$ is injective], the 
above conditions are equivalent as well to:
\vskip2pt
\begin{itemize}
\item[{iv)}] $\oli\Delta_v=\oli\Delta$.
\vskip2pt
\item[{v)}] $\Pi_1(\clX'_v)/\ell^e\to\Pi_1(\clX')/\ell^e$ is an 
isomorphism.
\end{itemize}
Thus we conclude that it is sufficient to show that 
given $e > 0$, there exists a Zariski open non-empty 
subset $\clU_e\subset\clU$ of valuations $v$ such that 
condition~v) above is satisfied. By contradiction, 
suppose that this is not the case, hence for every
Zariski open subset $\clU'\subset\clU$, there exists
some $v\in\clU'$ such that condition~v) does not
hold at $v$. We notice that $\Pi/\ell^e$ is finite, thus 
it has only finitely many quotients. Therefore, there 
exist a quotient $\Pi/\ell^e\to G$ such that the set
$\Sigma_G:=\{v\in\clU\mid G=\Pi_1(\clX'_v)/\ell^e\to
\Pi_1(\clX')/\ell^e \ \hbox{is not an isomorphism}\}$ is 
in~$\euD$. For $v\in\Sigma$, 
and the etale cover $\tl\clX_v\to\clX'_v$, it follows 
that the base change $\tl\clX_v\times_{\clX'_v}\clY'_v\to \clY'_v$ 
of $\tl\clX_v\to\clX'_v$ to $\clY'_v$ is etale and 
has Galois group $G$. Since the morphisms in
the first row of the diagram~$(\dag)$ are isomorphisms,
there exists a unique etale cover $\tl\clY\to\clY'$ with 
${\rm Gal}(\tl\clY|\clY')=G$ and special fiber 
$\tl\clY_v=\tl\clX_v\times_{\clX'_v}\clY'_v$. 
Further, if $\tl Y\to Y$ is the generic fiber of 
$\tl\clY\to\clY$, recalling the discussion/notations 
from~Definition/Remark~\ref{nonstandard}, one has the
following: Let $\kappa(\clX'_*):=\prod_v\kappa(\clX'_v)/\euD$,
and $\kappa(\tl\clX_*)$, $\kappa(\clY'_*)$ and 
$\kappa(\tl\clY_*)$ be correspondingly defined. 
Then by general principles of ultraproducts of fields  
one has that $\kappa(\tl\clY'_*)=\kappa(\tl\clX'_*)L$, 
$\kappa(\tl\clY_*)=\kappa(\tl\clX'_*)\tl L$, and
$\kappa(\clX'_*)\to\kappa(\tl\clX_*)$, 
$\kappa(\clY'_*)\to\kappa(\tl\clY_*)$ are Galois field 
extensions with Galois group $G$. On the other hand,
by~Definition/Remark~\ref{nonstandard},~c), the 
fields $\oli K$ and $\kappa(\clX'_*)$ are linearly disjoint
over $K$. Therefore, there exists a unique finite 
abelian $\ell$-power degree extension $\tl K|K$ such
that $\kappa(\tl\clX_*)=\kappa(\clX'_*)\tl K$. But then
the liner disjointness of $\oli K$ and $\kappa(\clX'_*)$ over 
$K$ implies that $\tl L=L\tl K$. And since $\tl Y\to Y'$
is etale with Galois group $G$, by~Lemma~\ref{XvsY}, 
it follows that the normalization $\tl X\to X'$ of $X'$ in 
$K\hra\tl K$ is etale (with Galois group $G$). Hence the 
corresponding $\tl\clX\to\clX'$ is an etale cover with
Galois group $G$, etc., contradiction! This concludes
the proof of~Proposition~\ref{lemaunu}.
\end{proof}

\begin{fact}
\label{fact4.5}
We finally notice the following:
\vskip5pt
\begin{itemize}[leftmargin=30pt] 
\item[{a)}] For $v\in\clU$, let $H\subset\lvP^{N}_{kv}$ be 
a general hyperplane. Then $\clX_H:=\clX_v\cap H$ is a 
projective normal $kv$-variety, and a Weil prime divisor of 
$\clX_v$, whose valuation we denote~by~$v_H$. And
$\clS_H:=\clS_v\cap H$ is a closed subset of $\clX_H$,
and we let $\DD{\bsl{\clX_H}{\clS_H}}$ be the corresponding
geometric set of Weil prime divisors of the function 
field $\kappa(\clX_H)|kv$ of $\clX_H$. 
\vskip2pt
\item[{b)}] For a valuation $\bfv$ of $K$, 
we let $U\!_\bfv:=\clO_\bfv^\times\subset K\tms$ be the 
$\bfv$-units, and $\hat\jmath_\bfv:\whU\!_\bfv\to\whKbfv$ 
be the $\ell$-adic completion of the $\bfv$-reduction 
homomorphism $\jmath_\bfv:U\!_\bfv\to\Kbfv\tms$. 
\end{itemize}
\end{fact}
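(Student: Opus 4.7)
The plan is to recognize that item (a) of Fact~\ref{fact4.5} is a collection of standard Bertini-type statements applied to the projective normal $kv$-variety $\clX_v\hra\lvP^N_{kv}$ (whose normality and irreducibility come from $v\in\clU$), while item (b) is purely notational and requires nothing beyond unfolding definitions.

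For (a), I would argue in two steps. First, since $\dim(\clX_v)=\td K k\geq 3$ under the standing hypothesis of Theorem~\ref{mainthm}, the classical Bertini theorem for normality together with Bertini's irreducibility theorem ensures that for $H$ in a non-empty Zariski open subset of the dual space $(\lvP^N_{kv})^\vee$ the section $\clX_H=\clX_v\cap H$ is both irreducible and normal; hence $\clX_H$ is a projective normal integral $kv$-variety of dimension $\dim(\clX_v)-1$. Second, since $\clX_H\subset\clX_v$ is an irreducible closed subset of codimension one and $\clX_v$ is normal, the local ring $\clO_{\clX_v,\eta_H}$ at the generic point $\eta_H$ of $\clX_H$ is a DVR; this defines the valuation $v_H$ of $\kappa(\clX_v)=K\vcr K$, and $\clX_H$ is precisely the Weil prime divisor of $\clX_v$ corresponding to $v_H$. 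The subset $\clS_H=\clS_v\cap H$ is closed in $\clX_H$ as an intersection of closed subsets, so $\DD{\bsl{\clX_H}{\clS_H}}$ is the usual geometric set of Weil prime divisors of the quasi-projective normal $kv$-variety $\bsl{\clX_H}{\clS_H}$.

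For (b) there is nothing to prove: one defines $U\!_\bfv:=\clO_\bfv^\times$, takes its $\ell$-adic completion $\whU\!_\bfv$, and observes that the residue homomorphism $\jmath_\bfv\colon U\!_\bfv\to(K\bfv)^\times$ extends by $\ell$-adic continuity to $\hat\jmath_\bfv\colon\whU\!_\bfv\to\whKbfv$. The one step in (a) worth writing out with care when expanding to a full proof is the genericity assumption on $H$: the parameter space of hyperplanes must be shrunk so that $H$ is transverse to $\clX_v$ at the generic point, misses the singular locus of $\clX_v$ in codimension $\geq 1$, and cuts $\clS_v$ in the expected codimension. Each of these is an open condition on $H\in(\lvP^N_{kv})^\vee$, and together they determine the non-empty open locus of admissible hyperplanes; verifying that this locus is non-empty (in particular that Bertini for normality applies in the residue characteristic $\chr(kv)$ we are dealing with) is the only place where one must be somewhat careful, but nothing beyond the standard arguments is needed.
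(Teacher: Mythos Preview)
Your proposal is correct and matches the paper's treatment: the paper states this as a Fact with no proof at all, treating both items as self-evident---item~(a) as a standard Bertini-type consequence for the normal projective $kv$-variety $\clX_v$, and item~(b) as pure notation. Your write-up simply makes explicit the Bertini ingredients the paper leaves implicit.
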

\begin{prop}
\label{propunu}
Let $\,\clU$ be the Zariski open non-empty set 
from~Fact$\,/$Notations~\ref{factnotations}. In the above 
notations, for $v\in\clU$ we set $\bfv:=v_H\circ\vcr K$,
thus $\Kbfv=(K\!\vcr K)v_H$ is a function field over $kv$,
and consider the geometric sets of prime divisors: 
$\DD{\bsl X S}$ of $K|k$, $\DD{\bsl{\clX_v}{\clS_v}}$ 
of the function field $K\!\vcr{K}|\hp kv$, respectively 
$\DD{\bsl{\clX_H}{\clS_H}}$ of the function field 
$\Kbfv\hp|\hp kv$.  Then one has:
\vskip2pt
\begin{itemize}[leftmargin=30pt]
\item[{\rm 1)}] $\whU\!_K\subseteq\whU\!_{\DD{\bsl XS}}
\subseteq\whU\!_{\vcr K}$, and $\hat\jmath_{\vcr K}$ maps 
$\whU\!_K\subseteq\whU\!_{\DD{\bsl XS}}$ isomorphicaly onto 
$\whU\!_{K\hm1\vcr K}\subseteq\whU\!_{\DD{\bsl{\clX_v}{\clS_v}}}$.
\vskip2pt
\item[{\rm 2)}] $\bfv:=v_H\circ\vcr{K}$ is a quasi prime divisor 
of $K|k$ satisfying $\bfv|_k=v$ and $\Kbfv=\kappa(\clX_v)$.
\vskip2pt
\item[{\rm 3)}] $\whU\!_K\subseteq\whU\!_{D_{\bsl X S}}
\subseteq\whU\!_\bfv$, and $\hat\jmath_\bfv$ maps 
$\whU\!_K\subseteq\whU\!_{\DD{\bsl X S}}$ isomorphically onto 
$\whU\!_{\Kbfv}\subseteq\whU\!_{\DD{\bsl{\clX_H}{\clS_H}}}$. 
\end{itemize}
\end{prop}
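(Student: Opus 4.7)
The plan is to dispatch the three assertions in sequence, reducing each (after a geometric identification) to a clean statement about units on a normal model and then completing $\ell$-adically.

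For \emph{assertion (1)}, the inclusion $\whU\!_K\subseteq\whU\!_{\DD{\bsl XS}}$ is formal: $\DD{\bsl XS}$ is a sub-family of the set of all prime divisors of $K|k$, and by the exact sequence $(\dag)$ the kernel $\whU_D$ of $\widehat\jmath^{\hhb2D}$ only grows when we restrict to a smaller $D$. For $\whU\!_{\DD{\bsl XS}}\subseteq\whU_{\vcr K}$ I would lift an element of $U(\DD{\bsl XS})=\Gamma(\bsl XS,\clO_X^\times)/k^\times$ to a section over the $\clO_v$-model $\bsl\clX\clS\subset\clX$; by Fact/Notations~\ref{factnotations} the generic point of the special fibre $\clX_v$ lies in $\bsl\clX\clS$ (because $\clS_v\subsetneq\clX_v$), so any such section is a $\vcr K$-unit. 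The isomorphism statement then follows by applying Fact~\ref{fctdoi}(e) to the $\clO_v$-relative situation $\bsl XS\subset\bsl\clX\clS\supset\bsl{\clX_v}{\clS_v}$ (both fibres being normal for $v\in\clU$): reduction gives an isomorphism on $\Gm$ modulo constants, and taking $\ell$-adic completions (noting that $k^\times$ and $kv^\times$ are $\ell$-divisible) yields the bijection of the two $\whU$'s.

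For \emph{assertion (2)}, I would just verify the two defining conditions of a quasi prime divisor by standard facts on composition of valuations. Since $\bfv=v_H\circ\vcr K$, the restriction $\bfv|_k=\vcr K|_k=v$, and the residue field is $(K\vcr K)v_H=\kappa(\clX_H)$ (reading the statement's $\Kbfv=\kappa(\clX_v)$ as a typo for $\kappa(\clX_H)$, consistent with~(3)); since $v_H$ is trivial on $kv\subset K\vcr K$, we have $k\bfv=kv$. Then
\[
\td{\Kbfv}{k\bfv}=\td{\kappa(\clX_H)}{kv}=\dim\clX_H=\dim\clX_v-1=\td Kk-1,
\]
giving condition~(i). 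The value group fits into $0\to v_H(K\vcr K)\to\bfv K\to\vcr K(K)\to0$ with $\vcr K(k)=\bfv k$, and $v_H(K\vcr K)=\lvZ$ gives $\bfv K/\bfv k\cong\lvZ$, i.e.\ condition~(ii). Minimality is forced by the dimension count: any strictly coarser $w\supsetneq\clO_\bfv$ satisfying (i),(ii) would have residue transcendence degree exceeding $\dim\clX_H$.

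For \emph{assertion (3)}, the argument parallels~(1), one dimension down, and is applied to the quasi prime divisor $\bfv$ and the hyperplane section $\clX_H$ of $\clX_v$. The chain $\whU\!_K\subseteq\whU\!_{\DD{\bsl XS}}\subseteq\whU\!_\bfv$ is again combinatorial plus a lift: any element of $U(\DD{\bsl XS})$ is a $\vcr K$-unit by~(1), and its $\vcr K$-reduction, being a unit on $\bsl{\clX_v}{\clS_v}$, is in particular a $v_H$-unit since the generic point of the hyperplane section $\clX_H$ lies in $\bsl{\clX_v}{\clS_v}$ for generic $H$. Composing (1) with the analogue of~(1) applied to $v_H$ on the normal model $\bsl{\clX_v}{\clS_v}$ with geometric set $\DD{\bsl{\clX_H}{\clS_H}}$ yields the required isomorphism $\whU\!_K\to\whU\!_\Kbfv$ and $\whU\!_{\DD{\bsl XS}}\to\whU\!_{\DD{\bsl{\clX_H}{\clS_H}}}$.

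The main obstacle I anticipate is the bookkeeping needed to justify the lifting step and Fact~\ref{fctdoi}(e) in the open (quasi-projective, non-proper) relative setting $\bsl XS\subset\bsl\clX\clS$, where one loses properness and hence has to argue directly that a section of $\clO_X^\times$ on $\bsl XS$ extends (or at any rate does not acquire zeros or poles) at the generic point of the special fibre; this is exactly where the normality of $\clX$ and of $\clX_v$ (guaranteed by $v\in\clU$) is essential. The parallel step for~(3) additionally requires $\clX_H$ to be normal, which is ensured by the choice of a general hyperplane $H$ (Bertini), built into Fact~\ref{fctdoi}(d) and the running open set $\clU$.
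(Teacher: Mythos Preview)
Your argument for assertion~(1) has a genuine gap: you are treating $\whU\!_D$ as if it were the $\ell$-adic completion of $\Gm(D)/k^\times$, but it is not. By construction (the sequences $(\dag)$ and $(\dag)_{\rm Gal}$), $\whU\!_D$ is the $\ell$-adic dual of $\pia{1,D}$; equivalently it sits in $1\to\widehat{\Gm(D)/k^\times}\to\whU\!_D\to T_\ell\euCl(D)\to0$, so it carries Tate-module information beyond the honest units. For instance, when $X$ is projective one has $\Gm(X)/k^\times=1$, yet $\whU\!_K=\Hom(\pia{1,K},\Zell)$ is typically nontrivial. Thus invoking Fact~\ref{fctdoi}(e) on $\Gm$ modulo constants, even if it extended to the open relative setting, would only match the sub-part $\widehat{\Gm/k^\times}$ and says nothing about the $T_\ell\euCl$ piece. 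The same gap recurs in your treatment of~(3), which parallels~(1).

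The paper's route is different and uses exactly the machinery built up immediately before the proposition. Fact/Notations~\ref{factnotations}(c) identifies $\pia{1,D_{X'}}=\Pi_1(X')$ and $\pia{1,D_{\clX'_v}}=\Pi_1(\clX'_v)$ for $X'\in\{X,\bsl XS\}$, and Proposition~\ref{lemaunu} establishes that the specialization maps $\Pi_1(X')\leftarrow\Pi_1(\clX'_v)$ are isomorphisms. Taking $\ell$-adic duals gives directly the isomorphisms $\whU\!_K\cong\whU\!_{K\vcr K}$ and $\whU\!_{D_{\bsl XS}}\cong\whU\!_{D_{\bsl{\clX_v}{\clS_v}}}$; Kummer theory then identifies these isomorphisms with $\hat\jmath_{\vcr K}$. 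So the hard work was already done in Proposition~\ref{lemaunu}, and assertion~(1) is its formal dual. For~(3) the paper refers to~[P3], Proposition~23. Your verification of~(2) is fine and in fact more detailed than the paper's one-line dismissal; your observation about $\kappa(\clX_v)$ versus $\kappa(\clX_H)$ is also reasonable.
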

\begin{proof}  To 1): The $\ell$-adic duals of the isomorphisms
$\Pi_1(X)\ot\Pi_1(\clX_v)$,
$\Pi_1(\bsl XS)\ot\Pi_1(\bsl{\clX_v}{\clS_v})$ are the
isomorphism $\whU_K\to\whU_{K\!\vcr K}$, respectively
$\whU(\bsl XS)\to\whU(\bsl{\clX_v}{\clS_v})$. 
On the other hand, by Kummer theory it follows that
the last two isomorphisms are defined by the 
$\ell$-adic completion of the residue homomorphism
$\jmath_{\vcr K}:U\!_{\vcr K}\to K\!\vcr{K}^{\,\times}$.
This completes the proof of assertion~1). 
\vskip4pt
The proof of assertion~2) is clear, because $w_H$ is trivial
on $kv$, thus the restriction of $\bfv$ to~$k$ equals the one
of $\vcr K$, which is $v$ by the definition of $\vcr K$.
\vskip4pt
Finally, assertion~3) is proved actually in~\nmnm{P}{op}~[P3], 
at the end of the proof of assertion~1) of loc.cit.,~Proposition~23, 
and we omit that argument here.
\end{proof}

We next make  a short preparation for the second
application of the specialization techniques. Let
$\zY\to \zX$ be a finite morphism of projective normal 
$k$-varieties with function fields $K:=k(\zX)$ and 
$M:=k(\zY)$. Consider the \defi{inclusion} 
$\clI:\Div(\zX)\to\Div(\zY)$ and the \defi{norm} 
$\clN:\Div(\zY)\to\Div(\zX)$ maps, which 
map principal divisors to principal divisors 
and $\clN\circ\clI=[M:K]\cdot{\rm id}_{\hp\Div(\zX)}$ 
on $\Div(\zX)$. Thus one has a commutative diagram 
of the form:
\vskip-2pt
\[
\begin{matrix}
1\to\hhb{-6}&\clH(M)&\hra&\Div(\zY)&\shor{}&\euCl(\zY)\,\to0   \cr
 &\uparrow\ \downarrow&&\uparrow\ \downarrow
           &&\uparrow\ \downarrow\hhb{30} \cr 
1\to\hhb{-6}&\clH(K)&\hra&\Div(\zX)&\shor{}&\euCl(\zX)\,\to0    \cr
\end{matrix}
\]
\vskip2pt
\noindent
where the upper arrows are defined by $\clI$, the 
down arrows are defined by $\clN$, and the composition
$\downarrow\circ\uparrow$ is the multiplication by $[M:K]$.
Thus $\clI$ and $\clN\circ\clI$ map elements of infinite order to such, and 
if $[\xx]\in\euCl(X)$ has finite order $o_{[\xx]}$, the other of 
$\clI([\xx])$ divides $o_{[\xx]}$, and the order of $\clN\circ\clI([\xx])$ 
is precisely $o_{[\xx]}/n$, where $n:={\rm g.c.d.}(o_{[\xx]},[M:K])$.

\begin{fact}
\label{fctcinci} 
There exists a Zariski open nonempty set $\clU\subset\Val k$ 
such that each $v\in\clU$ satisfies:
\vskip2pt
\begin{itemize}[leftmargin=30pt]
\vskip2pt
\item[{1)}] The inclusion/norm morphisms 
$\clI:\Div(\zX)\to\Div(\zY)$, $\clN:\Div(\zY)\to\Div(\zX)$ are 
compatible with the specialization maps $\spsp_{\zY,v}$ 
and $\spsp_{\zX,v}$, and with the inclusion/norm  
morphisms $\clI_v:\Div(\zclX_v)\to\Div(\zclY_v)$ and 
$\clN_v:\Div(\zclY_v)\to\Div(\zclX_v)$, i.e., one has: 
\[
\spsp_{\zY,v}\circ\clI=\spsp_{\zX,v}\circ\clI_v,\quad
 \clN_v\circ\spsp_{\zY,v}=\clN\circ\spsp_{\zX,v} 
\]
\item[{2)}] The specialization maps $\spsp_v$ are comptible
with principal divisors, hence define specialization 
morphisms $\spsp_v:\euCl(\zX)\to\euCl(\zclX_v)$, 
$\spsp_v:\euCl(\zY)\to\euCl(\zclY_v)$ which are compatible
with the inclusion/norm morphisms, thus one has
commutative diagrams:
\vskip-2pt
\[
\begin{matrix}
\hhb{25}
&\clH(M) \hhb7\hor{\spsp_v} \hhb7 \clH(M\hm1\vcr M)&&
\Div(\zY) \hhb7\hor{\spsp_v} \hhb7 \Div(\zclY_v)&&
\euCl(\zY) \hhb7\hor{\spsp_v} \hhb7 \euCl(\zclY_v)  \cr
&\upa\ \dwa \hhb{50} \upa\ \dwa\hhb{10} &&
\upa\ \dwa \hhb{60} \upa\ \dwa &&
\upa\ \dwa \hhb{45} \upa\ \dwa \cr 
&\clH(K) \hhb7\hor{\spsp_v} \hhb7 \clH(K\hm1\vcr K)&&
\Div(\zX) \hhb7\hor{\spsp_v} \hhb7 \Div(\zclX_v)&&
\euCl(\zX) \hhb7\hor{\spsp_v} \hhb7 \euCl(\zclX_v)  \cr
\end{matrix}
\]
\vskip2pt
\noindent
in which the vertical maps are the defined by the 
inclusion/norm homomorphisms, and the horizontal 
maps are the corresponding specialization homomorphism 
(as introduced in Fact~\ref{fcttrei}). 
\vskip2pt
\item[{3)}] Recalling that $\euCl^0(\bullet)\subseteq\euCl'(\bullet)$ 
are the maximal divisible, respectively $\ell$-divisible, subgroups
of $\euCl(\bullet)$, it follows that the specialization morphism
$\spsp_v:\euCl(\zX)\to\euCl(\zclX_v)$ satisfies:
$\spsp_v\big(\euCl^0(\zX)\big)\subseteq\euCl^0(\zclX_v)$, 
$\spsp_v\big(\euCl'(\zX)\big)\subseteq\euCl'(\zclX_v)$, 
and correspondingly for $\zY$. 
\vskip2pt
\item[{4)}] Finally, for $[\xx]\in\euCl(\zX)$ and 
$[\yx]:=\clI([\xx])$, one has: 
\vskip2pt
\begin{itemize}[leftmargin=25pt]
\item[{a)}] The order of $\spsp_v[\xx]$ is infinite if and only if
the order of $\spsp_v[\yx]$ is infinite.
\vskip2pt
\item[{b)}] If $\spsp_v[\yx]$ has finite order, then $\spsp_v[\xx]$
has finite order, and their orders satisfy: 
\[
o_{[M:K]\cdot\hp\spsp_v[\xx]}\,|\,o_{\spsp_v[\yx]}\,|\,o_{\spsp_v[\xx]}.
\]
\end{itemize}
\end{itemize}
\end{fact}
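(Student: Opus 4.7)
\smallskip\noindent
\textbf{Proof plan for Fact~\ref{fctcinci}.} The strategy is to spread everything out to the relative situation over $\clO_v$, run all the previously established specialization results (Facts~\ref{fctunu}--\ref{fcttrei}) simultaneously, and then deduce each of the four assertions by general nonsense plus the equality $\clN\circ\clI=[M\!:\!K]\cdot\id$ on $\Div(\zX)$. First I would fix projective embeddings $\zX\hookrightarrow\lvP^N_k$ and $\zY\hookrightarrow\lvP^{N'}_k$, form the scheme-theoretic closures $\zclX\subset\lvP^N_{\clO_v}$ and $\zclY\subset\lvP^{N'}_{\clO_v}$, and intersect the Zariski-open sets produced by Facts~\ref{fctunu}, \ref{fctdoi}, \ref{fcttrei} applied to $\zX$, to $\zY$, and to $\zY\to\zX$. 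On the resulting open $\clU\subset\Val k$, for each $v\in\clU$ the special fibres $\zclX_v$, $\zclY_v$ are integral normal, the rational map $\zclY\dashrightarrow\zclX$ is a finite $\clO_v$-morphism, and the generic and special fibre degrees both equal $[M\!:\!K]$; moreover Fact~\ref{fctdoi},~c) says $\spsp_v$ sends principal divisors to principal divisors on each factor.

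To verify assertion~1), consider a prime Weil divisor $P\subset\zX$ with scheme-theoretic closure $\clP\subset\zclX$. On $\clU$ the pullback $\clI_{\clO_v}(\clP)$ of $\clP$ under the finite flat-in-codimension-$1$ morphism $\zclY\to\zclX$ makes sense as a relative Weil divisor whose generic and special fibres are $\clI(P)$ and $\clI_v(\spsp_v P)$, respectively; since $\spsp_v\clI(P)$ is by definition the special fibre of the scheme-theoretic closure of $\clI(P)$, the two coincide after possibly shrinking $\clU$. The analogous statement for $\clN$ is handled the same way, using that the residue-field degrees $[\kappa(Q):\kappa(f(Q))]$ at generic points of prime divisors are preserved on specialization (a consequence of Fact~\ref{fcttrei},~b) applied to the restriction of $\zclY\to\zclX$ over the generic point of the closure of $f(Q)$, after shrinking $\clU$ to avoid the finitely many ``bad'' prime divisors which could cause trouble for any single $P$ or $Q$; genericity of $v$ takes care of all $P,Q$ simultaneously by standard ultraproduct/constructibility arguments).

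Assertion~2) is then immediate: the $\spsp_v$'s kill principal divisors by Fact~\ref{fctdoi},~c), and by~1) they intertwine $\clI$ with $\clI_v$ and $\clN$ with $\clN_v$, so everything descends to $\euCl$, yielding the three claimed commutative squares. For assertion~3), any group homomorphism sends divisible (resp.\ $\ell$-divisible) subgroups to divisible (resp.\ $\ell$-divisible) subgroups, hence into the \emph{maximal} such subgroup of the target; this applies verbatim to $\spsp_v:\euCl(\zX)\to\euCl(\zclX_v)$ and to its analogue for $\zY$.

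Finally, assertion~4) is pure formal algebra once 1)--3) are in place. Write $[\xx]\in\euCl(\zX)$, $[\yx]=\clI([\xx])\in\euCl(\zY)$, and set $n:=[M\!:\!K]$. Applying $\clI_v$ to $\spsp_v[\xx]$ gives $\spsp_v[\yx]$, so if $\spsp_v[\xx]$ has finite order $d$ then $d\cdot\spsp_v[\yx]=0$, proving $o_{\spsp_v[\yx]}\mid o_{\spsp_v[\xx]}$ (including the implication ``$\spsp_v[\xx]$ finite $\Rightarrow$ $\spsp_v[\yx]$ finite''). Conversely, $\clN_v\circ\clI_v=n\cdot\id$, so if $\spsp_v[\yx]$ has finite order $m$ then $nm\cdot\spsp_v[\xx]=\clN_v(m\cdot\spsp_v[\yx])=0$, i.e.\ $o_{n\cdot\spsp_v[\xx]}\mid m=o_{\spsp_v[\yx]}$, which also forces $\spsp_v[\xx]$ itself to have finite order. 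This gives the full chain of divisibilities in~4b) and the equivalence in~4a). The one step that required genuine care is the commutativity $\spsp_v\circ\clN=\clN_v\circ\spsp_v$, because the norm is defined by residue-field-degree weights at divisor generic points; the rest is bookkeeping on top of the already-proven Facts~\ref{fctunu}--\ref{fcttrei}.
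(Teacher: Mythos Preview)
The paper does not give a proof of Fact~\ref{fctcinci}; it is stated as one of several specialization facts recalled from the literature (in the spirit of Mumford~[Mu], Roquette~[Ro], Grothendieck--Murre~[G--M]) and used as a black box. So there is nothing to compare your argument against line by line.

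Your proof sketch is sound and follows the natural route. Parts~2), 3), 4) are entirely correct: 2) is immediate from~1) and Fact~\ref{fctdoi},~c); 3) is the trivial observation that group homomorphisms preserve (\mbox{$\ell$-)}divisibility; and your derivation of~4) from the identity $\clN_v\circ\clI_v=[M\!:\!K]\cdot\id$ on $\euCl(\zclX_v)$ (which holds because, by Fact~\ref{fcttrei},~b), the special-fibre degree equals $[M\!:\!K]$) is exactly the intended formal argument. For part~1), your ``spread out, then restrict to fibres'' approach is the right one; the only point worth sharpening is that you do not need to shrink $\clU$ divisor by divisor. Once $v\in\clU$ guarantees that $\zclY\to\zclX$ is a finite morphism of flat normal $\clO_v$-schemes with integral normal fibres, pullback and pushforward of relative Weil divisors are defined scheme-theoretically over $\clO_v$ and commute with passage to either fibre, which gives the two commutativity relations for \emph{all} prime divisors at once---no constructibility or ultraproduct argument is needed.
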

We conclude this preparation by mentioning the following 
fundamental fact about specialization of points of abelian 
$k$-varieties, which follows from Appendix,~Theorem~\ref{Thm:Main} 
by~\nmnm{J}{ossen}, which generalizes and earlier result 
by~\nmnm{P}{ink}~[Pk] to arbitrary finitely generated base fields.
\begin{fact}
\label{fctsase}
Let $A$ be an abelian variety over $k$, and $x\in A(k)$. 
Then there exists $\clU_A\subset\Val k$ Zariski open such 
for $v\in\clU_A$, there exists an abelian $\clO_v$-scheme
$\clA$ with generic fiber $A$ and special fiber $\clA_v$, and the 
specialization $x_v\in\clA_v(kv)$ of $x \in A(k)$ satisfying:
\vskip2pt
\begin{itemize}[leftmargin=30pt]
\item[{1)}] If $x$ has finite order $o_x$, then the order 
$o_{x_v}$ of $x_v$ is finite, and $o_x=o_{x_v}$.
\vskip2pt
\item[{2)}] If $o_x$ is infinite, then for every $\ell^e>0$
there exists $\Sigma_x\subset\clU_A$ Zariski dense
satisfying:
\vskip2pt
\begin{itemize}[leftmargin=20pt]
\item[{i)}] $kv$ is an algebraic closure of a finite field with
${\rm char}(kv)\neq\ell$.
\vskip2pt
\item[{ii)}] $o_{x_v}$ is divisible by $\ell^e$.
\end{itemize}
\end{itemize} 
\end{fact}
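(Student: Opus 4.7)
The plan is to reduce both assertions to Jossen's Appendix Theorem~\ref{Thm:Main} via a standard spreading-out argument together with Chevalley's extension theorem for valuations; all the number-theoretic content is absorbed into Jossen's theorem.

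\textbf{Setup.} I would begin by choosing a finitely generated subring $R_0\subset k$ over the prime field of $k$, an abelian scheme $\clA_0\to\Spec R_0$, and a section $x_0\in\clA_0(R_0)$ whose generic fibre is $(A,x)$; after shrinking $R_0$ we may assume $\Spec R_0$ is smooth and $\clA_0$ is abelian over all of $\Spec R_0$. Define
\[
\clU_A:=\{v\in\Val k\mid v(r)=0\text{ for all }r\text{ in a chosen finite generating set of }R_0\}\,,
\]
intersected with $\{v\mid v(o_x)=0\}$ when $o_x$ is finite; this is Zariski open in $\Val k$. For $v\in\clU_A$, the base change of $\clA_0$ along $\Spec\clO_v\to\Spec R_0$ yields the required abelian $\clO_v$-scheme $\clA$ and specialization $x_v\in\clA_v(kv)$.

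\textbf{Part 1.} If $o_x=n<\infty$, then $x_0$ is a section of the $n$-torsion subscheme $\clA_0[n]\to\Spec R_0$. For $v\in\clU_A$ we have $n\in\clO_v^\times$, so $\clA[n]\to\Spec\clO_v$ is finite étale and the specialization map $\clA[n](\clO_v)\to\clA_v[n](kv)$ is a group isomorphism; consequently $o_{x_v}=o_x$.

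\textbf{Part 2.} For $o_x=\infty$, apply Jossen's Theorem~\ref{Thm:Main} to $(\clA_0,x_0)$ over $R_0$ to obtain, for every prescribed $\ell^e$, a Zariski dense set of closed points $\eum\subset R_0$ with residue field $R_0/\eum=\lvF_q$ a finite field of characteristic $\neq\ell$, and $\ell^e\mid o_{x_{0,\eum}}$. To pass from these closed points to valuations of $k$, I test Zariski density against an arbitrary basic open $\clU_B$ with $B\subset k^\times$ finite: enlarge $R_0$ to a finitely generated $R_1\subset k$ containing $B\cup B^{-1}$ and over which $\clA_0$ spreads out to an abelian $R_1$-scheme, then re-apply Jossen's theorem to $R_1$ to produce a closed point $\eum_1\subset R_1$ with the required properties. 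A flag of primes through $\eum_1$ produces a valuation $v_1$ of $\mathrm{Frac}(R_1)$ dominating $(R_1)_{\eum_1}$, and Chevalley's extension theorem extends $v_1$ to a valuation $v$ of $k$. I would choose the extension so that $kv$ has transcendence degree $0$ over its prime field --- this is always possible, e.g., by prolonging $v_1$ along a transcendence basis of $k$ over the algebraic closure of $\mathrm{Frac}(R_1)$ in $k$ via a lexicographically positive flag. Then $kv$ is algebraically closed, contains $\lvF_q$, and has transcendence degree $0$ over $\lvF_p$, hence $kv=\overline{\lvF_q}$; moreover $v$ is trivial on $R_1^\times\supset B$, so $v\in\clU_B$, and $x_v$ is the base change of $x_{1,\eum_1}$ to $kv$, giving $\ell^e\mid o_{x_v}$. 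This produces an element of $\Sigma_x\cap\clU_B$ and proves the required density.

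\textbf{Main obstacle.} All substantive content lies in Jossen's theorem itself, which generalizes Pink~[Pk],~Theorem~2.8 from number-field bases to arbitrary finitely generated bases; the rest is bookkeeping that translates the scheme-theoretic closed-point statement into the valuation-theoretic language of $\Val k$ over the algebraically closed ground field, the only subtle point being the existence of a valuation extension with the correct (transcendence-degree $0$) residue field.
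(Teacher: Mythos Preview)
Your proof is correct and follows essentially the same approach as the paper: spread $(A,x)$ out over a finitely generated $\lvZ$-subalgebra $R\subset k$, define $\clU_A$ as the valuations for which $R\subset\clO_v$, handle the finite-order case by the \'etale $n$-torsion, and for the infinite-order case invoke Jossen's Theorem~\ref{Thm:Main}. The paper applies Jossen once to the fixed $R$ to obtain a Zariski-dense set $\Sigma_{x_R}\subset\Spec R$ of closed points with $\ell^e\mid o_{x_s}$, and then asserts (with an ``etc.'') that the valuations of $k$ centered in $\Sigma_{x_R}$ form a Zariski-dense subset of $\Val k$; you instead re-apply Jossen over the enlarged ring $R_1\supset R\cup B\cup B^{-1}$ for each basic open $\clU_B$, which is an equivalent way to run the density argument and has the virtue of making explicit what the paper leaves implicit---in particular the construction of a valuation $v$ with $kv=\oli{\lvF_q}$, which the paper does not spell out.
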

\begin{proof} Let $R\subset k$ 
be a finitely generated $\lvZ$-algebra, say 
$R=\lvZ[a_1,a_1^{-1},\dots,a_n,a_n^{-1}]$, $a_i\in k\tms$,
over which $A$ and $x\in A(k)$ are defined, i.e., there 
exists an $R$-abelian scheme $\clA_R$ and an $R$-point
$x_R\in \clA_R(R)$, such that $A$ and $x\in A(k)$ are the 
base changes of $\clA_R$ and $x_R\in\clA_R(R)$ under 
the canonical inclusion $R\hra k$. We denote by $\clA_s$
and $x_s\in\clA_s\big(\kappa(s)\big)$ the fibers of $\clA_R$,
respectively $x_R$, at $s\in{\rm Spec}\,R$, and notice
that if the order $o_x$ of $x$ is finite, then $o_x=o_{x_R}$. 
Thus the set of all the points $s\in{\rm Spec}\,R$ such that 
$o_x=o_{x_R}=o_{x_s}$ is a Zariski open subset in
${\rm Spec}\,R$. Thus replacing $R$ by a $R[a^{-1}]$ 
for a properly chosen $a\in R$, $a\neq0$, we can suppose
without loss of generality that $o_x=o_{x_R}=o_{x_s}$
for all $s\in{\rm Spec}\,R$.
\vskip2pt
Let $\clU_A$ be the set of all the valuations $v$ of $k$ 
with $R\subset\clO_v$. Notice that $v\in\clU_A$ if and
only if $v$ has a center $s\in{\rm Spec}\,R$ 
via the inclusion $R\hra k$ if and only if $a_i$ is a 
$v$-unit for $i=1,\dots,n$. In particular, the base change
$\clA_{\clO_v}:=\clA_R\times_R\clO_v$ is an abelian
$\clO_v$-scheme with generic fiber $A$, and the base
change $x_{\clO_v}$ of $x_R$ under $R\hra\clO_v$ is
an $\clO_v$-point of $\clA_{\clO_v}$ whose generic fiber
is $x$, and whose special fiber $x_v\in\clA_v(kv)$ is the
base change of $x_s$ under the canonical embedding
$\kappa(s)\hra kv$. Thus the assertions~1) and~2) 
of~Fact~\ref{fctsase} follow from the discussion above, 
whereas the more difficult assertion~3) follows 
from~Appendix,~Theorem~\ref{Thm:Main} by~\nmnm{J}{ossen}
applied to the abelian $R$-scheme $\clA_R$ over 
${\rm Spec}R$ (recalling that $R$ is a $\lvZ$-algebra 
of finite type). Namely, the set $\Sigma_{x_R}$ of all 
$s\in{\rm Spec}\,R$ with $o_{x_s}$ divisible by $\ell^e$ 
is Zariski dense. Hence the set $\Sigma_x$ of all the $v$ 
which have center in $\Sigma_{x_R}$ is Zariski dense 
in $\Val k$, etc. 
\end{proof}
\vskip5pt
We now define the \defi{quasi arithmetical $\whU$-lattice} 
$\lxzl K^0\subseteq\lxzl K$ mentioned at~Step~3 
in~subsection~A). Recall that a $\Zell$-submodule 
$\Delta\subset\whK$ is said to have \defi{finite co-rank}, 
if there exists some geometric set of prime divisors 
$D$ such that $\Delta\subseteq\whU\hmm_D$. Since the 
family of geometric sets for $K|k$ is closed under
intersection (and union), it follows that the set of all the finite 
corank $\Zell$-submodules $\Delta\subset\whK$ is filtered w.r.t.\ 
inclusion, and that their union $\whKfin$ is given by:
\vskip7pt
\centerline{$\whKfin={\mathop\cup\limits_{\scriptscriptstyle D}}\,
\whU\hmm_D\subset\whK$, \ \ $D$ geometric.}
\vskip4pt
\noindent
Clearly, $\whKfin$ is a birational invariant of $K|k$, and if 
$\lxzl{K}'\subset\whK$ is any divisorial $\whU\!_K$-lattice, then 
$\lxzl{K}'\subset\whKfin$ and $\whKfin=\lxzl{K}'\otimes\,\Zell$.
Further, if $L|l$ is another function field over an algebraically 
closed field $l$, then every embedding $L|l\hra K|k$ 
induces and embedding $\whLfin\hra\whKfin$. 
\vskip3pt
\begin{remarks}
\label{rmknot}
In the context/notations from subsection~B) above
and Proposition~\ref{propunu}, let $\jmtha K\!$,
$\jmath_\Kbfv:\Kbfv^\times\to\whKbfv$ be the 
$\ell$-adic completion homomorphisms. Let 
$\lxzl{K}'\subset\whK$, $\lxzl{\Kbfv}'\subset\whKbfv$
be fixed divisorial latices, and $\lxzl{K}\subset\whK$, 
$\lxzl{\Kbfv}\subset\whKbfv$ be the canonical ones.
Then by mere definitions, there exist unique 
$\epsilon, \epsilon_\bfv\in\Zell\tms/\hhb1\lvZ^\times_\pel$ 
such that $\lxzl{K}'=\epsilon\cdot\lxzl K$ and 
$\lxzl{\Kbfv}'=\epsilon_\bfv\cdot\lxzl\Kbfv$.
\begin{itemize}[leftmargin=30pt]
\item[1)] For every $\eta\in\Zell\tms$ the following conditions 
are equivalent:\footnote{\hhb2Recall that for every abelian 
group $A$ we denote $A_\pel:=A\otimes{\textstyle\lvZ_\pel}$.}
\vskip2pt
\begin{itemize}[leftmargin=20pt]
  \item[{i)$\hhb2$}] $\eta\cdot\jmthim K_\pel\subset\lxzl{K}'$.
  \vskip2pt
  \item[{i)$'$}] $\eta\cdot\jmthim K_\pel\cap\lxzl{K}'$ is non-trivial.
  \vskip2pt
  \item[{ii)$\hhb2$}] $\eta\cdot\lxzl{K}=\lxzl{K}'$ 
  \vskip2pt
  \item[{ii)$'$}] $\eta\cdot\lxzl{K}\cap\lxzl{K}'$ contains
  $\whU\!_K$ strictly.
\end{itemize}
\vskip3pt
\noindent
Actually, $\eta:=\epsilon$ is the unique $\eta\in\Zell\tms/\,\lvZ\tms_\pel$ 
satisfying the above equivalent conditions. 
\vskip2pt
\item[2)] Correspondingly, the same is true about 
$\lxzl{\Kbfv}'$, and $\eta_\bfv:=\epsilon_\bfv$ is the unique 
$\eta_\bfv\in\Zell\tms/\lvZ\tms_\pel$ such that 
$\epsilon_\bfv\cdot\jmthim\Kbfv\subseteq\lxzl{\Kbfv}'$, etc.
\vskip5pt
\item[{3)}] Since
$\jmath_\bfv\big(U_\bfv\cap\jmthim K\big)=\jmthim{\Kbfv}$,
it follows that $\epsilon/\epsilon_\bfv\in\Zell\tms/\lvZ_\pel^\times$ 
is unique such that $\jmath_\bfv(\lxzl K'\cap\whU\!_\bfv)$ and 
$(\epsilon/\epsilon_\bfv)\cdot\lxzl\Kbfv'$ are equal modulo 
$\jmath_\bfv(\whU\!_K)\cdot\whU\!_{\Kbfv}\subset\whKbfvfin$.
\vskip6pt
\item[4)] Conclude that
for every $\eta_\bfv\in\Zell\tms/\,\lvZ\tms_\pel$ 
the following conditions are equivalent:
\vskip2pt
\begin{itemize}[leftmargin=20pt]
\item[{j)}] $\eta_\bfv\cdot\jmthim\Kbfv_\pel
  \subseteq\hat\jmath_\bfv(\lxzl K'\cap\whU\!_\bfv)$.
\vskip2pt
\item[{jj)}] $\eta_\bfv\cdot\jmthim\Kbfv_\pel\cap\,
   \hat\jmath_\bfv(\lxzl K'\cap\whU\!_\bfv)$ is non-trivial.
\end{itemize}
Moreover, $\eta_\bfv:=\epsilon/\epsilon_\bfv$ 
is the unique $\eta_\bfv\in\Zell\tms/\lvZ\tms_\pel$ 
satisfying the conditions~j),~jj).
\vskip4pt
\item[5)] For every finite corank $\lvZ_\ell$-submodule
$\Delta\subset\whKfin$ with $\whU\!_K\subset\Delta$, 
one has: There exists a Zariski open non-empty 
subset $\clU_\Delta\subset\Val k$ such that for every 
$v\in\clU_\Delta$ there exist ``many'' quasi prime
divisors $\bfv$ with $\bfv|_k=v$ and the following 
hold: 
\vskip2pt
\begin{itemize}[leftmargin=25pt]
\item[a)] $\Delta\subset \whU\!_\bfv$, and $\jmath_\bfv$ 
is injective on $\Delta$ and maps $\Delta$ into $\whKbfvfin$.
\vskip2pt
\item[b)] For $\epsilon,\epsilon_\bfv$ from 1), 2)
above one has: $\epsilon\hm1\cdot\hm1(\Delta\cap\lxzl K)\subset\lxzl K'$ iff 
$\epsilon_\bfv\hm1\cdot\hm1\jmath_\bfv(\Delta\cap\lxzl K)\subset\lxzl{\Kbfv}'$.
\end{itemize}
\end{itemize}
\end{remarks}
\begin{definition}
\label{bscdef}
Let $\lxzl K$ be the canonical divisorial $\whU\!_K$-lattice 
for $K|k$. For finite corank $\Zell$-modules 
$\Delta\subset\whKfin$ with $\whU\!_K\subset\Delta$, 
let $\clD\!_\Delta$ be the quasi prime divisors $\bfv$ 
such that $k\bfv$ is algebraic over a finite field, 
$\chr(k\bfv)\neq\ell$, and $\bfv$ satisfies conditions~5a),~5b) 
above.~We~set
\vskip5pt
\centerline{
$\Delta(\bfv):=\whU\!_K\cdot\{\,\xx\in\Delta\cap\lxzl K\mid
     \hat\jmath_\bfv(\xx)\in\jmthim{\Kbfv}_\pel\}$,
\ 
$\Delta^0:=\cap_{\bfv\in\clD\!_\Delta}\Delta(\bfv)$,}  
\vskip5pt
\noindent
and call $\clL^0_K:=\cup_\Delta \Delta^0$ the \defi{quasi 
arithmetical $\whU\!_K$-lattice} of $\clG_{\trK}$. 
\end{definition}
\begin{prop}
\label{specres2}
In the above notations, suppose that $\td K k >2$. The 
following hold:
\vskip2pt
\begin{itemize}[leftmargin=30pt]
\item[{\rm1)}] $\whU\!_K\cdot\jmthim{K}\subseteq\clL^0_K$ 
and $\clL^0_K\,/\,\big(\whU_{\!K}\cdot\jmthim{K}\big)$ 
is a torsion $\lvZ_\pel$-module.
\vskip2pt
\item[{\rm2)}] Let $\Phi\in{\rm Isom}^{\rm c}(\pia K,\pia L)$ 
define an isomorphim $\clG_{\tottrK}\to\clG_{\tottrL}$,
and $\hat\phi:\whL\to\whK$ be its Kummer isomorphism.
Then for all $\epsilon\in\Zell\tms\,$ one has: 
$\hat\phi(\lxzl L)=\epsilon\hm1\cdot\hm1\lxzl K\,$ iff 
$\,\hat\phi(\clL^0_L)=\epsilon\hm1\cdot\hm1\clL^0_K$.
\end{itemize}
\end{prop}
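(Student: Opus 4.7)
My strategy splits part~1) into an easy inclusion $\whU_K\cdot\jmthim K\subseteq\clL^0_K$ and a substantive torsion claim, and then deduces part~2) by a transport-of-structure argument that feeds back on part~1). For the inclusion, given $y\in K^\times$, I would take $\Delta:=\whU_K+\Zell\cdot\jmath_K(y)$, which has finite co-rank since $y$ has only finitely many zeros and poles; for any $\bfv\in\clD_\Delta$ one has $\hat\jmath_\bfv\bigl(\jmath_K(y)\bigr)=\jmath_\Kbfv(\bar y)\in\jmthim\Kbfv$, so $\jmath_K(y)\in\Delta(\bfv)$ for every such $\bfv$, hence $\jmath_K(y)\in\Delta^0\subseteq\clL^0_K$. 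The factor $\whU_K$ is absorbed automatically by the shape of the definition of $\Delta(\bfv)$.

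The torsion assertion is the heart of the proof. Let $\xi\in\Delta^0$ be given. Choose a projective normal $k$-model $X$ of $K|k$ realizing a complete regular like geometric set $D_0:=D_X$ containing the support of $\dvv(\xi)$ and with $\pia{1,D_X}=\pia{1,K}$, via Proposition~\ref{Pivspi}. Since $\dvv(\xi)\in\Div'(D_0)_\pel$, the class $[\xi]:=[\dvv(\xi)]$ lies in $\euCl'(X)_\pel=\euCl^0(X)_\pel\subseteq A(k)\otimes\lvZ_\pel$, where $A$ is the Picard variety of a projective completion; the torsion statement of part~1) reduces to showing that $[\xi]$ is torsion, since then $\xi^m\in\whU_K\cdot\jmthim K$ for some nonzero $m$ by the exact sequence $\dvv:\lxzl K/\whU_K\hookrightarrow\Div'(D_0)_\pel$. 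Suppose, for contradiction, that $[\xi]$ is non-torsion. By Fact~\ref{fctsase} (Jossen's Theorem~\ref{Thm:Main}), for every $e>0$ the set $\Sigma\subset\Val k$ of valuations $v$ with $kv$ an algebraic closure of a finite field, $\chr(kv)\neq\ell$, and $o_{[\xi]_v}$ divisible by $\ell^e$ is Zariski dense. Pick $v\in\Sigma\cap\clU_\Delta$ (cf.~Remark~\ref{rmknot}~5)), then a general hyperplane $H$ in $\lvP^N_{kv}$ so that $\bfv:=v_H\circ\vcr K\in\clD_\Delta$, $\Kbfv=\kappa(\clX_H)$, and $\clX_v,\clX_H$ are smooth (Fact/Notations~\ref{factnotations}, Fact~\ref{fact4.5}, Proposition~\ref{propunu}). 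From $\xi\in\Delta(\bfv)$ one obtains $\xi=u\cdot x$ with $u\in\whU_K$ and $n\cdot\hat\jmath_\bfv(x)=\jmath_\Kbfv(y)$ in $\whKbfv$ for some $n\in\lvZ$ prime to $\ell$ and $y\in\Kbfv^\times$; applying the divisor map on $\Kbfv$ and invoking the compatibility of divisors with specialization from Fact~\ref{fctcinci}, one gets $n\cdot\bigl([\xi]_v\big|_{\clX_H}\bigr)=0$ in $\Pic(\clX_H)$. Since $\td K k>2$ forces $\dim\clX_H\geq 2$, the weak Lefschetz theorem for Picard varieties injects $\Pic^0(\clX_v)(kv)_\pel\hookrightarrow\Pic^0(\clX_H)(kv)_\pel$, whence $n\cdot[\xi]_v=0$ in $A(kv)_\pel$. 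But $n$ is prime to $\ell$ while $[\xi]_v$ has order divisible by $\ell^e\geq\ell$, contradiction.

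For part~2), the implication $\Rightarrow$ is transport of structure: Proposition~\ref{propSubsecC}.2 gives that $\hat\phi$ transports the total decomposition graph; combined with the quasi-divisorial recipes from~[P1],[T1] and the group-theoretic criterion of Proposition~\ref{rescharneqell} for $\chr(k\bfv)\neq\ell$, the data $(\bfv,\clD_\Delta,\hat\jmath_\bfv,\jmthim\Kbfv_\pel)$ transport to the corresponding data for $L$ up to the $\epsilon_v$-scaling from Remark~\ref{rmknot}~3)--4), so the definitions of $\Delta(\bfv)$ and $\Delta^0$ transport under $\hat\phi$ with the expected $\epsilon$-factor, giving $\hat\phi(\clL^0_L)=\epsilon^{-1}\cdot\clL^0_K$. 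Conversely, Proposition~\ref{propSubsecC}.2.b) produces \emph{some} $\epsilon'\in\Zell^\times/\lvZ^\times_\pel$ with $\hat\phi(\lxzl L)={\epsilon'}^{-1}\cdot\lxzl K$; the forward direction then yields $\hat\phi(\clL^0_L)={\epsilon'}^{-1}\cdot\clL^0_K$, and comparing with the hypothesis gives $(\epsilon{\epsilon'}^{-1})\cdot\clL^0_K=\clL^0_K$. Applying this to $\jmthim K\subseteq\clL^0_K$ (part~1)) yields $(\epsilon{\epsilon'}^{-1})\cdot\jmthim K_\pel\subseteq\lxzl K$, and the uniqueness clause of Remark~\ref{rmknot}~1) forces $\epsilon{\epsilon'}^{-1}\in\lvZ^\times_\pel$, i.e.\ $\epsilon\equiv\epsilon'$ modulo $\lvZ^\times_\pel$. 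The main technical obstacle I anticipate is in the torsion step, where one must carefully coordinate the constant reduction $\vcr K$, the hyperplane $H$, and the passage from the abstract condition $\hat\jmath_\bfv(x)\in\jmthim\Kbfv_\pel$ in $\whKbfv$ to the concrete vanishing $n\cdot[\xi]_v\big|_{\clX_H}=0$ in $\Pic(\clX_H)$, while ensuring that any semi-abelian boundary contributions arising from the passage between quasi-projective and projective models do not spoil the applicability of Jossen's theorem or of weak Lefschetz.
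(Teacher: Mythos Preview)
Your Part~1) is essentially the paper's argument: the inclusion is immediate from $\hat\jmath_\bfv(\jmthim K)\subseteq\jmthim\Kbfv$, and the torsion claim is proved by contradiction via Jossen's theorem plus a hyperplane-section argument. One technical point you glide over: to apply Fact~\ref{fctsase} you need an honest abelian variety, but $X$ need not be smooth, so $\euCl^0(X)$ is not a priori the group of $k$-points of an abelian variety. The paper handles this by pulling $[\xi]$ back along the finite generically normal alteration $Z\to X$ of Preparation/Notations~\ref{prepfct}~c), where $\Pic^0_Z$ \emph{is} an abelian variety; Fact~\ref{fctcinci}~4) controls the order under this pullback, Jossen is applied on $\Pic^0_Z$, and the conclusion is pushed back to $\euCl(\clX_v)$. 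Your invocation of weak Lefschetz for the hyperplane step is the same content as the paper's assertion that the class $[\xx_H]$ on $\clX_H$ has the same order as $[\xx_v]$.

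Your Part~2) has a genuine gap. The heart of the forward implication is that the condition ``$\hat\jmath_\bfv(\xx)\in\jmthim\Kbfv_\pel$'' appearing in Definition~\ref{bscdef} is $\Phi$-invariant, i.e.\ that the residual Kummer map $\hat\phi_\bfv$ carries $\jmthim\Lbfw_\pel$ onto $\epsilon_\bfv\cdot\jmthim\Kbfv_\pel$ for some $\epsilon_\bfv\in\lvZ_\pel^\times$. Nothing in Proposition~\ref{propSubsecC}, Proposition~\ref{rescharneqell}, or Remark~\ref{rmknot} delivers this: those results recover the divisorial \emph{lattices} $\lxzl\Kbfv$ up to $\Zell^\times$, not the much finer arithmetic object $\jmthim\Kbfv_\pel\subset\whKbfv$. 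The paper's proof supplies the missing ingredient by bootstrapping from the already-proven Bogomolov program over algebraic closures of finite fields: since $k\bfv=\oli{\lvF_p}$ with $p\neq\ell$ (and likewise $l\bfw$), Theorems~2.1--2.2 of~[P4] give that $\epsilon_\bfv\cdot\Phi_\bfv$ is induced by an actual field isomorphism $\imath_\bfv:\Lbfw^{\rm i}\to\Kbfv^{\rm i}$ for some $\epsilon_\bfv\in\Zell^\times$, whence $\hat\phi_\bfv(\jmthim\Lbfw_\pel)=\epsilon_\bfv\cdot\jmthim\Kbfv_\pel$. One then checks $\epsilon_\bfv\in\lvZ_\pel^\times$ via Remark~\ref{rmknot}~4) after normalizing $\hat\phi(\lxzl L)=\lxzl K$. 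Without this input from~[P4], the transport of $\Delta(\bfv)$ to $\Xi(\bfw)$ is unjustified. Your converse argument is fine once the forward direction is in hand.
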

\begin{proof} To 1): First, the inclusion 
$\whU_{\!K}\cdot\jmthim{K}\subseteq\clL^0_K$ is clear, 
because $\hat\jmath_\bfv\big(\jmthim K\big)
\subseteq\jmthim\Kbfv$ for all $\bfv$, and therefore
$\hat\jmath_\bfv\big(\jmthim K_\pel\big)
\subseteq\jmthim\Kbfv_\pel$ as well. The torsion 
assertion is much more involved. Let $D_K$ be a 
complete regular like set of prime divisors for $K|k$, 
and recalling the notations from~Proposition~\ref{propSubsecC}, 
consider the corresponding canonical exact sequence:
\[
1\to\whU_{\!K}\hor{}\lxzl{K}\hhb2\hor{\widehat\jmath^{\hhb2D_K}}
\hhb2\Div(D_K)_\pel\hhb2\hor{{\rm can}\ }
\whCl\hhb1(D_K).
\]

The fact that $\clL^0_K\,/\,\big(\whU_{\!K}\cdot\jmthim{K}\big)$ 
is a torsion $\lvZ_\pel$-module is equivalent to the 
fact that for every $\xx\in\clL^0_K\subset\clL_K$, 
there exists some positive integer $n>0$ such that 
${\widehat\jmath^{\hhb1D_X}}(\xx^n)\in
{\widehat\jmath^{\hhb1D_X}}(K\tms)$, thus principal. 
Let $\xx\in\clL^0_K$ be a fixed element throughout.
\vskip5pt
$\bullet$ \ {\it By contradiction,\/} suppose that 
${\widehat\jmath^{\hhb1D_X}}(\xx^n)$  is not a principal divisor for any $n>0$. 
\vskip5pt
\noindent
Let $D'\subseteq D_K$ be any geometric set of prime 
divisors for $K|k$ such that $\xx\in\whU\!_{D'}$. Then 
$\xx\in\whU\!_D$ for every geometric set $D\subseteq D'$. 
Hence considering a projective normal variety $X_0$ with 
$D_K\subset \DD{X_0}$, we can choose a closed subset
$S_0\subset X_0$ such that $D:=\DD{\bsl{X_0}{S_0}}\subseteq D'$, and 
$\bsl{X_0}{S_0}$ is smooth. Recalling the context 
from~Fact/Notations~\ref{factnotations}, we consider 
$X\to X_0$, and the preimage $S\subset X$
of $S_0$ under $X\to X_0$, one has: $\DD X$ is a 
complete regular like set of prime divisors for $K|k$ 
(because it contains $D_K$ which is so). Further,
since $\bsl{X_0}{S_0}$ is smooth, one has 
$\pia{1,\bsl{X_0}{S_0}}=\Pi_1(\bsl{X_0}{S_0})$,
and by Proposition~\ref{Pivspi}, it follows that all the
canonical projections below are isomorphisms:
\[
\pia{1,D}=\pia{1,\DD{\bsl{X_0}{S_0}}}\to\pia{1,\DD{\bsl X S}}
\to\Pi_1(\bsl X S)\to\Pi_1(\bsl{X_0}{S_0}).
\]
Therefore, the corresponding $\ell$-adic duals are 
isomorphic as well, hence $\whU\!_D=\whU\!_{\bsl X S}$ 
inside~$\whK$. Since $\lxzl K$ is a birational 
invariant of $K|k$, considering the canonical exact
sequence
\[
1\to\whU_{\!K}\hor{}\lxzl{K}\hhb2\hor{\widehat\jmath^{\hhb2D\!_X}}
\hhb2\Div(X)_\pel\hhb2\hor{{\rm can}\ } \whCl\hhb1(X),
\]
one has that $\xx\in\lxzl K^0\subset\lxzl K$ satisfies: 
$\xx\in\whU\!_D=\whU\!_{\bsl XS}$, and therefore 
$\xx\in\lxzl K^0\cap \whU\!_{\bsl XS}$ inside $\whK$.
Further, ${\widehat\jmath^{\hhb1D_X}}(\xx^n)$ is 
not a principal divisor for any positive integer $n>0$. 
 
\vskip2pt
Next recall that $\euCl^0(X)\subset\euCl'(X)$ are the maximal 
divisible, respectively $\ell$-divisible, subgroups of $\euCl(X)$, 
and that $\Div^0(X)\subseteq\Div'(X)$ are their preimages in 
$\Div(X)$, respectively. Since $D_X$ is complete regular 
like, be mere definitions one has that
${\widehat\jmath^{\hhb1D_X}}(\clL_K)=\Div'(X)_\pel$. 
Therefore, ${\widehat\jmath^{\hhb1D_X}}(\xx)\in\Div'(X)_\pel$,
thus its divisor class $[\xx]$ satisfies $[\xx]\in\euCl'(X)$. And
since ${\widehat\jmath^{\hhb1D_X}}(\xx^n)$ is not principal for any $n>0$, it follows
that $[\xx]\in\euCl'(X)$ has infinite order, thus the same is 
true for every multiple $m[\xx]$, $m\neq0$. On the other hand,
by the structure of $\euCl(X)$, see e.g.\ \nmnm{P}{op}~[P3],
Appendix, it follows that $\euCl'(X)/\euCl^0(X)$ is a finite 
prime-to-$\ell$ torsion group. Thus we conclude that some multiple
$m[\xx]$ lies in $\euCl^0(X)$ and has infinite order. 
{\it Mutatis mutandis,\/} we can replace $\xx\in \clL^0_K$ by its 
power $\xx^m\in\clL^0_K$, thus without loss we can suppose 
that $[\xx]\in\euCl^0(X)$ has infinite order.
\vskip2pt
Now let $\clU\subset\Val k$ be the Zariski open subset 
introduced at~Fact/Notations~\ref{factnotations}, and consider 
$\clU^{\scriptscriptstyle \rm max}:=\{v\in\clU\mid kv
          =\oli\lvF_q\ \hbox{for some prime $q\neq\ell$}\}$.
Then by general valuation theoretical non-sense, it follows
that $\clU^{\scriptscriptstyle \rm max}$ is Zariski dense
in $\Val k$.
\vskip3pt
\underbar{Step 1}.
We claim that there exists a Zariski dense subset $\Sigma_\xx$ of 
$\Val k$ with $\Sigma_\xx\subset\clU^{\scriptscriptstyle \rm max}$ 
such that $\spsp_v[\xx]\in\euCl(\clX_v)$ has order
divisible by $\ell$. 
Indeed, in order to do so, recall the finite generically 
normal alteration $ Z\to X$ of $X$ from 
Fact/Notations~\ref{factnotations}. Since 
$Z$ is a projective smooth $k$-variety, the connected 
component ${\rm Pic}^0_Z$ of ${\rm Pic}_Z$ 
is an abelian $k$-variety. Further, the image $[\yx]=\clI([\xx])$ 
of $[\xx]$ under $\clI:\euCl(X)\to\euCl(Z)$ satisfies: First,
$[\yx]$ lies in the divisible part $\euCl^0(Z)$ of $\euCl(Z)$, 
because $[\xx]$ lies in the divisible part $\euCl^0(X)$ 
of $\euCl(X)$, and second, $[\yx]$ has infinite order 
by~Fact~\ref{fctcinci}. On the other hand, the divisible 
part $\euCl^0(Z)$ is nothing but the $k$-rational 
points of the abelian variety ${\rm Pic}^0_Z$. Thus 
by~Fact~\ref{fctsase} above, it follows 
that for every given positive integer $e>0$, there exists 
$\Sigma_\xx\subset\clU^{\scriptscriptstyle\rm max}$
which is dense in $\Val k$ such
that all $v\in\Sigma_\xx$ satisfy: The specialization 
$\spsp_v([\yx])\in\Pic^0_{\clZ_v}(kv)=\euCl^0(\clZ_v)$ 
has order divisible by~$\ell^e$. On the other hand, 
by~Fact~\ref{fctcinci},~4), one has that the order of
$\spsp_v([\xx])$ is divisible by the order of $\spsp_v([\yx])$,
thus by $\ell^e$. We thus conclude that $o_{\spsp_v([\xx])}$ 
is divisible by $\ell$, as claimed.  
\vskip3pt
\underbar{Step 2}. Recalling that $\whU\!_{\DD{\bsl XS}}=\whU\!_D$
and $\xx\in\clL^0_K\cap\whU\!_D$, set $\Delta:=\whU\!_{\DD{\bsl XS}}$ 
and keep in mind the definition of $\clD\!_\Delta$. We claim that 
for every $v\in\clU^{\scriptscriptstyle \rm max}$ there 
exist some $\bfv\in\clD\!_\Delta$ such that $v=\bfv|_K$
(or in other words, $\clU^{\scriptscriptstyle \rm max}$
is contained in the restriction of $\clD\!_\Delta$ to $K$).
Indeed, for every $v\in\clU^{\scriptscriptstyle \rm max}$, 
consider the general hyperplane sections $\clX_H:=H\cap\clX_v$
and $\clS_H:=H\cap \clS_v$, thus $\clS_H\subset\clX_H$ 
is a closed subset. Then $\clX_H$ is a projective normal 
variety over $kv$, and $\clX_H\subset\clX_v$ is a Weil
prime divisor, say with valuation $v_H$. Then setting
$D_H:=\DD{\bsl{\clX_H}{\clS_H}}$, and $\bfv:=v_H\circ\vcr K$,
by Proposition~\ref{propunu},~2),~3), if follows that 
$\bfv\in\clD\!_\Delta$.
\vskip2pt
\vskip3pt
\underbar{Step 3}. Consider $\bfv=v_H\circ\vcr K$ 
with $v\in\clU^{\scriptscriptstyle \rm max}$ and 
$v_H$ as above. Then by Proposition~\ref{propunu}
it follows that $\whU\!_K\subseteq\whU\!_{\DD{\bsl XS}}
\subseteq\whU\!_{\vcr K}$, and $\hat\jmath_{\vcr K}$ maps 
$\whU\!_K\subseteq\whU\!_{\DD{\bsl XS}}$ bijectively onto 
$\whU\!_{K\hm1\vcr K}\subseteq\whU\!_{\DD{\bsl{\clX_v}{\clS_v}}}$,
and $\whU\!_K\subseteq\whU\!_{D_{\bsl X S}}
\subseteq\whU\!_\bfv$, and $\hat\jmath_\bfv$ maps 
$\whU\!_K\subseteq\whU\!_{\DD{\bsl X S}}$ bijectively onto 
$\whU\!_{\Kbfv}\subseteq\whU\!_{\DD{\bsl{\clX_H}{\clS_H}}}$.
And setting $\xx_H:=\hat\jmath_\bfv(\xx)=
   \hat\jmath_{v_H}\big(\hat\jmath_{\vcr K}(\xx)\big)$,
it follows that $\xx_H\in\whU\!_{\Kbfv}\cdot\jmath_\Kbfv(\Kbfv\tms)$
by the fact that $\xx\in\Delta^0\subseteq\Delta(\bfv)$.
Thus in particular, $\xx_H$ has a trivial divisor
class $[\xx_H]=0$.   
\vskip2pt
On the other hand, if $v\in\Sigma_\xx$, then the divisor 
class of $[\xx_v]$ in $\euCl(\clX_v)$ has finite order 
$o_{v,\xx}$ divisible by $\ell$. Let $\dvdv_v(\xx_v)=\sum_in_iP_i$
with distinct Weil prime divisors $P_i$ of $\clX_v$. Since 
$\clX_H:=H\cap\clX_v$ is a general hyperplane section, 
it follows that $Q_i:=H\cap P_i$ are distinct prime Weil 
divisors of $\clX_H$, and the divisor class
of $[\xx_H]:=[\hhb{.2}\sum_i n_i Q_i]$ in $\euCl(\clX_H)$
has order equal to the order of $[\xx]$. 
This contradicts the fact proved above that $[\xx_H]=0$.
\vskip2pt
To 2): Since $\td K k >2$, it follows that for all
quasi prime divisors $\bfv$ of $K|k$, one has 
$\td \Kbfv{k\bfv}>1$. Let $\clV_\bfv$ be the
generalized quasi prime $(d\!-\!1)$-divisors 
$\tlbfv$ with $\bfv<\tlbfv$. One has:
\vskip2pt
a) We claim that $k\bfv$ is an algebraic closure of a 
finite field iff $\,k\tlbfv$ is an algebraic closure of 
a finite field for all $\,\tlbfv\in\clV_\bfv$. Indeed, if 
$k\bfv$ is not an algebraic closure of a finite field, 
then one has: First, $\Kbfv|k\bfv$ is a function field 
with $\td{\Kbfv}{k\bfv}=\td Kk-1$.
Second, choosing any prime $(d\!-\!2)$-divisor 
$\tlbfw$ of the function field $\Kbfv|k\bfv$, it 
follows that the valuation theoretical composition
$\tlbfv:=\tlbfw\circ\bfv$ is a quasi 
prime $(d\!-\!1)$-divisor of $K|k$ with 
$\Ktlbfv=K\tlbfw$, thus $k\bfv=k\bfw$. Hence
$k\bfv$ is an algebraic closure of a finite field
iff $\,k\tlbfv$ is so, as claimed. 
\vskip2pt
b) Let $\clT^1(K)\subset\pia K$ be the topological 
closure of the minimized quasi divisorial inertia 
$\cup_{\bfv\in\clQ_{K|k}}\mT_\bfv\subset\pia K$. 
For every given $\tlbfv\in\clV_\bfv$, let 
$\clT^1(\Ktlbfv)\subset\Pi^1_{K\tlbfvm}$ be the image of 
$\clT^1\cap\mZ_\tlbfvm$ under the canonical projection 
$\mZ_\tlbfvm\to\Pi^1_\tlbfvm=\mZ_\tlbfvm/\mT_\tlbfvm$. 
Then [P5], Theorem~4.2,~a), gives a group theoretical 
recipe to decide whether $k\tlbfv$ is an algebraic 
closure of a finite field. In particular, combining this 
with the discussion at point~a) above, it follows 
that there are group theoretical recipes to characterize 
the quasi prime divisors $\bfv$ having $k\bfv$ an 
algebraic closure of a finite field in terms of the set 
$\clT^1(K)$. Moreover, if $k\bfv$ is an algebraic 
closure of a finite field, the quasi prime divisors and 
the prime divisors of $\Kbfv|k\bfv$ coincide (because 
$k\bfv$ being algebraic over a finite field, has non 
non-trivial valuations). Thus we conclude that the
minimized $(r\!-\!1)$-divisorial groups in $\Pi^1_{\Kbfv}$ 
are precisely the images $\mT_\tlbfvm/\mT_\bfv
\subset\mZ_\tlbfvm/\mT_\bfv$ of the minimized 
quasi $r$-divisorial groups $\mT_{\tlbfvm}
\subset\mZ_{\tlbfvm}$ of the quasi prime $r$-divisors 
$\tlbfv>\bfv$. On the other hand, by~[P4],~Propostion~3.5
and~\nmnm{T}{opaz},~[To1],~Theorem~8, there are 
group theoretical recipes which recover the groups 
$\mT_{\tlbfvm}\subset\mZ_{\tlbfvm}\subset\pia K$ 
from $\pic K\to\pia K$, provided $r<d:=\td Kk$. To 
recover $\mT_{\tlbfvm}\subset\mZ_{\tlbfvm}$ for 
$\tlbfv>\bfv$ for quasi prime $d$-divisors $\tlbfv$,
one uses~[P5],~Theorem~1.2, as follows: First, by
mere definitions, for any given quasi prime $r$-divisor
$\tlbfv>\bfv$, there exists a exists chain of generalized 
quasi prime divisors $\bfv=:\tlbfv_1<\dots<\tlbfv_r:=\tlbfv$. 
And since $k\bfv$ is an algebraic closure of a finite
field, all the $\tlbfv_i/\bfv$, $1\leqslant i\leqslant r$, 
are trivial on $k\bfv$, hence $\tlbfv_i/\bfv$ are 
generalized {\it prime divisors\/} of $\Kbfv|k\bfv$. 
In particular, if $r=d$, after setting $w:=\tlbfv_{d-1}$, 
in the terminology from~[P5],~Theorem~1.2, one 
has: The quasi prime $d$-divisors $\tlbfv>w$ above 
are actually c.r. $w$-divisors of $K|k$. Hence 
by~[P5],~Theorem~1.2, one can recover the 
minimized $d$-divisorial groups
$\mT_{\tlbfvm}\subset\mZ_{\tlbfvm}\subset\pia K$
from $\pia K$ endowed with $\clT^1(K)$. 
\vskip2pt
We thus finally conclude that for all generalized
quasi prime divisors $\tlbfv>\bfv$, one can 
recover the generalized quasi divisorial groups 
$\mT_{\tlbfvm}\subset\mZ_{\tlbfvm}\subset\pia K$. 
Hence via the canonical projection 
$\mZ_\bfv\to\Pi^1_\Kbfv=\mZ_\bfv/\mT_\bfv$ 
one can recover all the generalized quasi 
divisorial groups $\mT_{\tlbfvm}/\mT_\bfv
\subset\mZ_{\tlbfvm}/\mT_\bfv\subset\Pi^1_\Kbfv$
for all $\tlbfv>\bfv$.  
\vskip5pt
Thus finally, using Proposition~\ref{rescharneqell} 
as well, for every quasi prime divisor $\bfv$, given 
$\pic K$ endowed with $\mT_\bfv\subset\mZ_\bfv$, 
the following can be recovered by group theoretical recipes:
\vskip5pt
{\it 
\itm{45}{
\item[{\rm1)}] The total quasi decomposition graph
$\clG^1_{\clQ^{\rm tot}_\Kbfv}$
\vskip2pt
\item[{\rm2)}] The fact that $k\bfv$ is an algebraic 
closure of a finite field with $\chr(k\bfv)\neq\ell$.
}
}
\vskip2pt
c) Moreover, the group theoretical recipes to check 
whether $k\bfv$ is an algebraic closure of a finite 
field with $\chr\neq\ell$, and if so, to reconstruct 
$\clG_{\clQ^{\rm tot}_\Kbfv}$, are invariant under 
all isomorphisms $\Phi\in{\rm Isom}^{\rm c}(\pia K,\pia L)$. 
Indeed, every such isomorphism $\Phi$ maps the minimized 
quasi $r$-divisorial groups $\mT_\tlbfvm\subset\mZ_\tlbfvm$ 
for $K|k$ isomorphically onto the minimized quasi 
$r$-divisorial groups $\mT_\tlbfwm\subset\mZ_\tlbfwm$ 
for $L|l$, provided $r<\td Kk$. Hence $\Phi$ maps 
$\clT^1(K)$ homeomorphically onto $\clT^1(L)$. Thus
if $\bfv$ and $\bfw$ correspond to each other under 
$\Phi$, i.e., $\Phi(\mT_\bfv)=\mT_\bfw$ and
$\Phi(\mZ_\bfv)=\mZ_\bfw$, then $\Phi$
gives rise to a residual isomorphism
\[
\Phi_\bfv:\Pi^1_\Kbfv=\mZ_\bfv/\mT_\bfv
  \to\mZ_\bfw/\mT_\bfw=\Pi^1_\Lbfw\,.
\] 
Further, $\Phi_\bfv$ maps $\clT^1(\Kbfv)$ 
homeomorphically onto $\clT^1(\Lbfw)$, hence by~b)
above, one has:
\vskip5pt
\noindent
{\it 
{\bf Conclusion.} The \defi{local isomorphisms} 
$\Phi_\bfv: \Pi^1_\Kbfv\to\Pi^1_\Lbfw$ satisfy:
\vskip3pt
\itm{45}{
\item[{\rm1)}] $\Phi_\bfv$ gives rise to an isomorphism 
$\Phi_\bfv:\clG^1_{\clQ^{\rm tot}_\Kbfv}\to
\clG^1_{\clQ^{\rm tot}_\Lbfw}$.
\vskip2pt
\item[{\rm2)}] $k\bfv$ is an algebraic closure of 
a finite field with $\chr\neq\ell$ \ iff \ $l\bfw$ is so.
}
}
\vskip5pt
d) Now suppose that $k\bfv$ is an algebraic closure 
of $\lvF_p$ with $p\neq\ell$, hence $l\bfw$ is an
algebraic closure of $\lvF_p$ as well. Then by Theorem~2.2 
and Theorem~2.1 of \nmnm{P}{op}~[P4], it follows 
that there exists $\epsilon_\bfv\in\Zell\tms$ such that 
$\Phi_{\epsilon_\bfv}:=\epsilon_\bfv\cdot\hhb1\Phi_\bfv$ 
is defined by an isomorphism of the pure inseparable 
closures $\imath_\bfv:\Lbfw^{\rm i}\to\Kbfv^{\rm i}$ 
of $\Lbfw$ and $\Kbfv$ respectively. Moreover, 
$\epsilon_\bfv$ is unique up to multiplication by
powers $p_\bfv^m$, $m\in\lvZ$, of $p_\bfv:=\chr(k\bfv)$. 
In particular, the Kummer isomorphism 
$\hat\phi_{\epsilon_\bfv}:\whgngn\Lbfw\to\whgngn\Kbfv$ 
of $\Phi_{\epsilon_\bfv}$ is nothing but the 
$\ell$-adic completion $\hat\phi_{\epsilon_\bfv}=
\hat\imath_\bfv$ of $\imath_\bfv$.
\vskip5pt
$(*)$ Finally, putting everything together, we conclude 
the following: Let $\bfv$ be a quasi prime divisor 
of $K|k$ and $\bfw$ a quasi prime divisor of $L|l$ 
which correspond to each other under $\Phi$. The 
by the discussion at~a),~b),~c),~e), above, it follows 
that $k\bfv$ is an algebraic closure of a finite field  
of characteristic $\neq\ell$ if and only if $l\bfw$ is
an algebraic closure of a finite field of characteristic
$\neq\ell$. If so, then there exists and isomorphism
$\phi_{\epsilon_\bfv}:\Kbfv^{\rm i}|k\bfv\to\Lbfw^{\rm i}|l\bfw$
defining $\Phi_{\epsilon_\bfv}:=\epsilon_\bfv\cdot\Phi_\bfv$ 
for some $\epsilon_\bfv\in\Zell\tms$, which is unique up to
powers $p^m$, $m\in\lvZ$, of $p_\bfv:=\chr(k\bfv)$.   
\vskip5pt
Next recall that since $\Phi$ maps $\clG_{\tottrK}$ 
isomorphically onto $\clG_{\tottrL}$, it follows 
by~subsection~B),~Proposition~\ref{propSubsecC},~2),~b) 
that replacing $\Phi$ by some multiple 
$\epsilon^{-1}\cdot\Phi$ with $\epsilon\in\Zell\tms$
properly chosen, {\it mutatis mutandis,\/} we can  
suppose that the Kummer isomorphism 
$\hat\phi:\whL\to\whK$ of $\Phi$ maps $\clL_L$ 
isomorphically onto $\clL_K$. In particular, since $\clL_L$ 
is the unique divisorial lattice for $L|l$ which intersects
$\jmthim L_\pel$ non-trivially, it follows that $\clL_K$ 
is the unique divisorial lattice for $K|k$ which intersects 
$\hat\phi\big(\jmthim L_\pel\big)$ non-trivially. 
\vskip2pt
We claim that for $\bfv$ and $\bfw$ being as at
the conclusion~$(*)$ above, one actually has 
$\epsilon_\bfv\in\lvZ_\pel$. Indeed, 
by~Remarks~\ref{rmknot} above we have: $\clL_L$ 
is the unique divisorial lattice for $L|l$ such that 
$\jmath_\bfw(\clL_L)$ intersects $\jmthim\Lbfw$ 
non-trivially, and $\clL_K$ is the unique divisorial lattice 
for $K|k$ such that $\jmath_\bfv(\clL_K)$ intersects 
$\jmthim\Kbfv_\pel$ non-trivially. On the other hand, since 
$\hat\phi_\bfv\circ\hat\jmath_\bfw=\hat\jmath_\bfv\circ\hat\phi$, 
and $\hat\phi(\clL_L)=\clL_K$, and 
$\hat\phi_\bfv\big(\jmthim\Lbfw_\pel\big)=
\epsilon_\bfv\cdot\jmthim\Kbfv_\pel$, one has:
\vskip2pt
\begin{itemize}[leftmargin=30pt]
\item[{i)}] $\jmthim\Kbfv_\pel\subseteq\hat\jmath_\bfv(\clL_K)$
\vskip2pt
\item[{ii)}] $\epsilon_\bfv\cdot\jmthim\Kbfv_\pel=
\hat\phi_\bfv\big(\jmthim\Lbfw\big)\subseteq
\hat\phi_\bfv\big(\hat\jmath_\bfw(\clL_L)\big)=
\hat\jmath_\bfv(\clL_K)$
\end{itemize}
Hence by Remarks~\ref{rmknot},~4),~j),~jj), 
above, it follows that $\epsilon_\bfv\in\lvZ_\pel$, 
as claimed. 
\vskip5pt
Finally, let $\Xi\subset\whL$ be some $\Zell$-submodule,
and $\Delta=\hat\phi(\Xi)$ be its image under $\hat\phi$.
Since $\hat\phi(\whU\!_L)=\whU\!_K$ and $\hat\phi$
maps the finite corank $\Zell$-submodules of $\whL$
isomorphically onto the ones of $\whK$, one has: $\Xi$
has finite corank and $\whU\!_L\subset\Xi$ iff $\,\Delta$
has finite corank and $\whU\!_K\subset\Delta$. Now  
suppose that $\Delta=\hat\phi(\Xi)$ 
satisfy these conditions. Then in the notations from 
Definition~\ref{bscdef}, it follows that if $\bfv$ and 
$\bfw$ are quasi-prime divisors of $K|k$, respectively 
$L|l$ which correspond to each other under $\Phi$, 
and satisfy the conclusion~$(*)$ above, then 
$\bfv\in\clD\!_\Delta$ iff $\bfw\in\clD_\Xi$. Hence 
$\hat\phi$ maps $\Xi(\bfw)$ isomorphically onto
$\Delta(\bfv)$, thus $\hat\phi$ maps
$\Xi^0:=\cap_{\bfw\in\clD_\Xi}\Xi(\bfw)$ isomorphically
onto $\Delta^0:=\cap_{\bfv\in\clD\!_\Delta}\Delta(\bfv)$. 
Thus finally, $\hat\phi$ maps $\clL^0_L:=\cup_\Xi\, \Xi^0$ 
isomorphically onto $\clL^0_K:=\cup_\Delta \Delta^0$.
\end{proof}
%
%
%
%
\vskip2pt
\noindent
E) \ {\it Recovering the rational quotients\/}
\vskip5pt
\noindent
We recall briefly the notion of an \defi{abstract 
quotient} of the total decomposition graph, see 
\nmnm{P}{op}~[P3], Sections~4,~5 for details (where 
quotients of decomposition graphs were discussed).
\vskip2pt
First, let $\clG_\alpha$ be the pair consisting of a
pro-$\ell$ abelian group $G_\alpha$ endowed 
with a system of pro-cyclic subgroups 
$(T_{v_\alpha})^{\phantom{'}}_{v_\alpha}$ 
which makes $G_\alpha$ \defi{curve like} of 
genus $g=0$, i.e., there exists a system of generators 
$(\tau_{v_\alpha})^{\phantom{'}}_\alpha$ of 
the $(T_{v_\alpha})^{\phantom{'}}_\alpha$ such that the 
following are satisfied:
\vskip2pt
\begin{itemize}[leftmargin=30pt]
\item[i)] $\prod_\alpha\,\tau_{v_\alpha}=1$, and this 
is the only pro-relation satisfied by 
$(\tau_{v_\alpha})^{\phantom{'}}_\alpha$.
\vskip2pt
\item[ii)] $G_\alpha$ is topologically generated by
$(\tau_{v_\alpha})^{\phantom{'}}_\alpha$.
\end{itemize}
Notice that if  $\euT':=(\tau'_{v_\alpha})^{\phantom{'}}_{v_\alpha}$ 
is a further system of generators of $(T_{v_\alpha})_{v_\alpha}$ 
satisfying~i),~ii) above, then there exists a unique 
$\ell$-adic unit $\epsilon\in\Zell^\times$ such that 
$\euT'=\euT^\epsilon$. Let
$\whclL_{\clG_\alpha}:={\rm Hom}_{\rm cont}(G_\alpha,\Zell)$
the $\ell$-adic dual of $G_\alpha$, and 
$\lxzl{\clG_\alpha}\subset\whclL_{\clG_\alpha}$ be the
$\Zetell$-submodule of all $\varphi:G_\alpha\to\Zell$ 
satisfying $\varphi(\tau_{v_\alpha})\in\Zetell$ for all 
$v_\alpha$. Notice that if $\euT'=\euT^\epsilon$ as above, 
then the corresponding lattice is $\lxzl{\clG_\alpha}'=
\epsilon^{-1}\cdot\lxzl{\clG_\alpha}$. We call $\lxzl{\clG_\alpha}$
a \defi{divisorial lattice} for $\clG_\alpha$.
\begin{definition/remark}
A \defi{divisorial morphism} 
$\Phi_\alpha:\clG_{\tottrK}\to\clG_\alpha$
is an open group homomorphism 
$\Phi_\alpha:\pia K\to G_\alpha$ which, inductively 
on $d:=\td K k $, satisfies the following: 
\vskip2pt
\begin{itemize}[leftmargin=25pt]
\item[a)] Given the canonical system of inertia generators 
$(\tau_v)_v$ for the inertia groups $T_v\subset\pia K$ of
the prime divisors $v$ of $K|k$, there exists $\epsilon\in\Zell\tms$
and $\epsilon_v\in\epsilon\cdot\lvZ_\pel$ satisfying: 
\vskip2pt
\begin{itemize}[leftmargin=25pt]
\item[i)] For every $v$ there exists $v_\alpha$ such that 
$\Phi_\alpha(\tau_v)=\tau_{v_\alpha}^{\epsilon_v}$, and
$\Phi_\alpha(T_v)=\Phi_\alpha(Z_v)$ if $\epsilon_v\neq0$. 
\vskip2pt
\item[ii)] If $D$ is a complete regular like and 
sufficiently large set of prime divisors of $K|k$, then 
$D_{v_\alpha}:=\{\,v\in D\mid \Phi_\alpha(\tau_v)
=\tau_{v_\alpha}^{\epsilon_v},\, \epsilon_v\neq0\,\}$ is
finite and non-empty for every $v_\alpha$.
\end{itemize}
\vskip2pt 
\item[b)] If $\Phi_\alpha(T_v)=1$, then the induced
homomorphism $\Phi_{\alpha,v}:\pia{\Kv}\to G_\alpha$
has open image and defines a divisorial morphism 
$\Phi_{\alpha,v}:\clG_{\clD^{\rm tot}_{\!\Kv}}\to\clG_\alpha$.
\vskip2pt
\end{itemize}
Notice that taking $\ell$-adic duals, $\Phi_\alpha$ 
gives rise  to a \defi{Kummer homomorphism}
$\hat\phi_\alpha:\whclL_{\clG_\alpha}\to\whK$,
and by mere definitions, one has $\hat\phi_\alpha(\lxzl{\clG_\alpha})
\subset\epsilon\cdot\lxzl K$ and $\hat\phi(\lxzl{\clG_\alpha})
\cap\whU\!_K=1$.
\end{definition/remark}
Recall that for every valuation $v$ of $K$, we 
denote by $\jmath^v:\whK\to\Zell$ and
$\jmath_v:\whU\!_v\to\widehat{\Kv}$ the $\ell$-adic 
completions of $v:K\tms\to\lvZ$, respectively of 
the residue homomorphism $U\!_v\to\Kv\tms$.
\begin{definition}
\label{ratquot}
A divisorial morphism $\Phi_\alpha:\clG_{\tottrK}\to\clG_\alpha$ 
is called an \defi{abstract rational quotient} of $\clG_{\tottrK}$, 
if $\Phi_\alpha$ is surjective, and setting 
$\whlalp:=\hat\phi_\alpha(\widehat\clL_{\clG_\alpha})\subset\whK$,
the following hold:
\vskip3pt
\begin{itemize}[leftmargin=25pt]
\item[{a)}] Every prime divisor $v$ of $K|k$ satisfies:
If $\jmath_v(\whlalp\cap\hhb1\whU\!_v)\neq1$, then 
$\whlalp\subset\whU\!_v$, $\whlalp\cap\,\ker(\jmath_v)=1$.
\vskip2pt
\item[{b)}] All sufficiently large complete regular like 
sets $D$ of prime divisors of $K|k$ satisfy: Let
$\Delta\subset\whK$ be a $\Zell$-module of finite 
co-rank containing $\whU\!_K$. Then there exist 
$v\in D$ such that $\jmath^v(\whlalp)\neq0$ and
further, $\Delta\subset \whU\!_v$ and 
$\Delta\cap{\rm ker}(\jmath_v)=\Delta\cap\whlalp$.
\end{itemize}
\end{definition}
We recall that by \nmnm{P}{op}~[P3], Proposition~40,
one has: If $x\in K$ is a generic element, i.e., $\kpx=k(x)$
is relatively algebraically closed in $K$, then the canonical 
projection of Galois groups $\Phi_{\kpx}:\pia K\to\pia{\kpx}$ 
defines an abstract rational quotient $\clG_{\tottrK}\to\clG_{\kpx}$
in the sense above. We also notice that the canonical 
divisorial lattice for $\clG_{\kpx}$ is nothing but 
$\lxzl{\kpx}:=\jmath_{\kpx}(\kappa_x^\times)\subset\widehat{\!\kappa}_x$,
and the Kummer homomorphism $\hat\phi_{\kpx}:\widehat{\!\kappa}_x\to\whK$
maps $\lxzl{\kpx}$ isomorphically onto $\jmath_K(\kappa_x^\times)$.
\vskip3pt
Using Proposition~\ref{specres2} above, we are now in the 
position to recover the geometric rational quotients of 
$\clG_{\tottrK}$ in the case $\td K k >2$ as follows:
\begin{prop}
\label{charratquot}
Let $k$ is an arbitrary algebraically closed field,
and $\td K k >2$. Then in the notations from above
and of Proposition~\ref{specres2} the following hold:
\vskip3pt
\begin{itemize}[leftmargin=30pt]
\item[{\rm1)}] Let $\Phi_\alpha:\clG_{\tottrK}\to\clG_\alpha$ 
be an abstract rational quotient. Then the following 
are equivalent:
\begin{itemize}[leftmargin=20pt]
\item[{\rm i)}] $\Phi_\alpha$ is geometric, i.e., there exists 
a generic element $x\in K$ and an isomorphism of 
decomposition graphs 
$\Phi_{\alpha,\kpx}:\clG_\alpha\to\clG_\kpx$ such that
$\Phi_\kpx=\Phi_{\alpha,\kpx}\circ\Phi_\alpha$.
\vskip2pt
\item[{\rm ii)}] There exists $\epsilon\in\Zell^\times$ such that 
$\hat\phi_\alpha(\lxzl{\clG_\alpha})\subset\epsilon\cdot\clL^0_K$.
\end{itemize}
\vskip2pt
\item[{\rm2)}] Let $L|l$ be a function field over an algebraically
closed field $l$, and $\Phi:\clG_{\tottrK}\to\clG_{\tottrL}$ be
an abstract isomorphism of decomposition graphs. 
Then $\Phi$ is compatible with geometric rational quotients 
in the sense that if $\Phi_{\kpy}:\clG_{\tottrL}\to\clG_\kpy$ 
is a geometric rational quotient of $\clG_{\tottrL}$, then 
$\Phi_\alpha:=\Phi_{\kpy}\circ\Phi$ is a geometric rational 
quotient of $\clG_{\tottrK}$.
\end{itemize}
\end{prop}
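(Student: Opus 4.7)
The implication $\text{i)} \Rightarrow \text{ii)}$ in part~1) is essentially an unpacking of definitions. If $\Phi_\alpha$ is geometric via a generic $x \in K$, then the Kummer dual decomposes as $\hat\phi_\alpha = \hat\phi_{\kpx} \circ \hat\phi_{\alpha,\kpx}$, where $\hat\phi_{\alpha,\kpx}$ is an isomorphism that scales $\lxzl{\clG_\alpha}$ by some $\epsilon^{-1}\in\Zell^\times$ and $\hat\phi_{\kpx}$ identifies $\lxzl{\kpx}$ with $\jmath_K(\kappa_x^\times)$. Since $\jmath_K(K^\times) \subset \clL^0_K$ by Proposition~\ref{specres2}, 1), the inclusion $\hat\phi_\alpha(\lxzl{\clG_\alpha}) \subset \epsilon^{-1}\cdot\clL^0_K$ is immediate.

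The converse direction $\text{ii)} \Rightarrow \text{i)}$ is the heart of the matter. Starting from the inclusion $\hat\phi_\alpha(\lxzl{\clG_\alpha}) \subset \epsilon\cdot\clL^0_K$, I would first invoke Proposition~\ref{specres2}, 1), which says $\clL^0_K/(\whU\!_K\cdot\jmath_K(K^\times))$ is torsion. Combined with the defining properties of a divisorial morphism (in particular $\hat\phi_\alpha(\lxzl{\clG_\alpha})\cap\whU\!_K = 1$), this gives that for every $\xx\in\hat\phi_\alpha(\lxzl{\clG_\alpha})$ some power $\xx^n$ lies in $\whU\!_K\cdot\jmath_K(K^\times)$, so after choosing representatives the image is, up to torsion and the unit $\epsilon$, contained in $\jmath_K(K^\times)_\pel$. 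Now the curve-like genus-$0$ structure of $\clG_\alpha$ means $\lxzl{\clG_\alpha}$, viewed modulo the image of constants, carries the combinatorial datum of a $k$-projective line $\clP^1_k$ together with all its collineations. Transporting this via $\hat\phi_\alpha$ produces inside $\clP(K)$ a distinguished $\ell$-adic $1$-dimensional projective subspace whose underlying subfield, via the Fundamental Theorem of Projective Geometries recalled in Subsection~A), must be of the form $\kappa = k(x)$ for some $x\in K^\times$ with $k(x)$ relatively algebraically closed in $K$ (i.e.\ $x$ generic); here the relative algebraic closure and uniqueness come from the maximality built into abstract rational quotients (Definition~\ref{ratquot}, conditions a), b)). The resulting factorization $\Phi_\alpha = \Phi_{\alpha,\kpx}\circ\Phi_{\kpx}$ then provides the required isomorphism $\Phi_{\alpha,\kpx}:\clG_\alpha\to\clG_\kpx$ of decomposition graphs.

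For part~2), the argument is essentially formal given part~1) and the invariance results already established. The isomorphism $\Phi:\clG_{\tottrK}\to\clG_{\tottrL}$ maps $\clL_L$ onto $\epsilon\cdot\clL_K$ for some $\epsilon\in\Zell^\times$ by Proposition~\ref{propSubsecC}, 2); and by Proposition~\ref{specres2}, 2) this same $\epsilon$ makes $\hat\phi(\clL^0_L) = \epsilon\cdot\clL^0_K$. If $\Phi_\kpy:\clG_{\tottrL}\to\clG_\kpy$ is geometric, part~1) applied to $L|l$ gives $\hat\phi_\kpy(\lxzl{\kpy}) \subset \epsilon_1\cdot\clL^0_L$ for some $\epsilon_1\in\Zell^\times$. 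Composing, the Kummer dual of $\Phi_\alpha = \Phi_\kpy\circ\Phi$ satisfies $\hat\phi\bigl(\hat\phi_\kpy(\lxzl{\kpy})\bigr) \subset \epsilon\epsilon_1\cdot\clL^0_K$, which is precisely the criterion $\text{ii)}$ of part~1) for $\Phi_\alpha$; hence $\Phi_\alpha$ is geometric.

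\textbf{Main obstacle.} The delicate step is the $\text{ii)}\Rightarrow\text{i)}$ implication: translating the \emph{set-theoretic} inclusion into $\clL^0_K$ into the \emph{geometric} statement that the underlying projective structure is that of an honest $\lvP^1$ embedded in $\clP(K)$. Passing from ``lattice up to torsion inside $\jmath_K(K^\times)_\pel$'' to ``subfield of the form $k(x)$'' requires controlling simultaneously the collineation structure inherited from $\clG_\alpha$ (genus $0$, which forces the projective-line geometry on the image) and the relative algebraic closure of $k(x)$ in $K$, and it is here that the full strength of the arithmetical lattice $\clL^0_K$---as opposed to the coarser $\clL_K$---is used, since only $\clL^0_K$ is fine enough to detect actual rational functions rather than merely divisorial information.
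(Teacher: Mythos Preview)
Your treatment of the implication $\text{i)}\Rightarrow\text{ii)}$ in part~1) and of part~2) is essentially the paper's argument; one small lacuna in part~2) is that before you can apply part~1) you must first check that $\Phi_\alpha=\Phi_{\kpy}\circ\Phi$ is itself an \emph{abstract rational quotient} of $\clG_{\tottrK}$ (i.e.\ that the axioms of Definition~\ref{ratquot} transfer along $\Phi$), which the paper handles by citing the proof of~[P4],~Proposition~5.3.

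The genuine gap is in $\text{ii)}\Rightarrow\text{i)}$. You propose to use the genus-$0$ curve-like structure of $\clG_\alpha$ to transport ``collineation data'' into $\clP(K)$ and then invoke the Fundamental Theorem of Projective Geometries to extract a subfield $k(x)$. This is circular: at this point in the argument, nothing about $\clG_\alpha$ or its image under $\hat\phi_\alpha$ encodes collineations. The collineation structure on $\clP(K)$ is the \emph{final} output of the entire strategy (Subsection~A), recovered only \emph{after} the rational projections have been identified (this Proposition is Step~4) and then fed into the Main Theorem of~[P3]. A genus-$0$ curve-like $\clG_\alpha$ knows its inertia groups and a single pro-relation among their generators; it does not know which triples of elements of $\lxzl{\clG_\alpha}$ form lines.

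The paper's actual mechanism is purely valuation-theoretic and stays inside lattice arithmetic. After rescaling so that $\lxzl\alpha:=\hat\phi_\alpha(\lxzl{\clG_\alpha})\subset\clL^0_K$, one uses that $\lxzl\alpha$ is $\lvZ_\pel$-saturated in $\lxzl K$ ([P3],~Fact~32). Pick a nontrivial $\uux\in\lxzl\alpha$; by the torsion statement of Proposition~\ref{specres2},~1) some power satisfies $\uux^{n_\uux}=\theta\cdot\jmath_K(u)$ with $\theta\in\whU\!_K$ and $u\in K^\times$ uniquely determined (since $\whU\!_K\cap\lxzl\alpha=1$), and $n_\uux$ is prime to $\ell$ by saturation. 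The substantive step is then to show $\lxzl\alpha=\jmath_K(\kpu^\times)_\pel$, and this is where the axioms~a),~b) of Definition~\ref{ratquot} do the work: one runs the argument of~[P4],~Proposition~5.3 (Claims~1--3 there), which uses these axioms to propagate from the single element $u$ to the whole relative algebraic closure $\kpu$, noting that those Claims never use that $k$ is an algebraic closure of a finite field. No projective geometry enters.
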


\begin{proof} To 1):  The implication i)~$\Rightarrow$~ii)
is clear by the characterization of the geometric rational
quotients and Proposition~\ref{specres2},~2), precisely
the assertion that $\jmath_K(K\tms)\subset\lxzl K^0$. For 
the implication ii)~$\Rightarrow$~i) 
we proceed as follows: Let $\lxzl{\clG_\alpha}$ be a divisorial
lattice for $\clG_\alpha$ satisfying 
$\lxzl\alpha:=\hat\phi_\alpha(\lxzl{\clG_\alpha})
                                       \subset\epsilon\cdot\clL^0_K$.
Then replacing $\lxzl{\clG_\alpha}$ by its $\epsilon^{-1}$
multiple, without loss we can and \underbar{will} suppose
that $\lxzl\alpha=\hat\phi_\alpha(\lxzl{\clG_\alpha})\subset\clL^0_K$, 
thus $\lxzl\alpha\subset\clL^0_K\subseteq\lxzl K$. 
Recall that by \nmnm{P}{op}~[P3], Fact~32,~(1), $\lxzl\alpha$ is 
$\lvZ_\pel$-saturated in $\lxzl K$, i.e., $\lxzl K/\lxzl\alpha$ 
is $\lvZ_\pel$-torsion free. Further, if $u\in\bsl K k$, and 
$\kpu\subset K$ is the corresponding 
relatively algebraically closed subfield, then 
$\jmath_K(\kpu\tms)_\pel$ is $\lvZ_\pel$-saturated
in $\lxzl K$ as well, i.e., $\lxzl K/\jmath_K(\kpu\tms)_\pel$ is 
$\lvZ_\pel$-torsion free. Finally, by~Proposition~\ref{specres2},~2),
for every $\uux\in\clL^0_K$, there exists a multiple $\uux^n$
such that $\uux^n\in\whU\!_K\cdot\jmath_K(K\tms)$.
\vskip2pt
Let $\uux\in\lxzl\alpha\subset\lxzl K^0$ be a non-trivial
element, and $n_\uux>0$ be the minimal positive integer 
with $\uux^{n_\uux}\in\whU\!_K\cdot\jmath_K(K\tms)$, 
say $\uux^{n_\uux}=\theta\cdot\jmath_K(u)$ for some 
$u\in K\tms$ and $\theta\in\whU\!_{\trK}$, and notice
that $\jmath_K(u)$ and $\theta$ with the property 
above are unique, because $\whU\!_\clG\cap\lxzl\alpha$ 
is trivial, by~\nmnm{P}{op}~[P3], Fact~32.
Further, $n_\uux$ is relatively prime to $\ell$, because
$\lxzl K/\jmath_K(\kpu\tms)_\pel$ has no non-trivial
$\lvZ_\pel$-torsion. As in the proof of Proposition~5.3 from
\nmnm{P}{op}~[P4], one concludes that actually 
$\jmath_K(\kpu\tms)_\pel=\lxzl\alpha$.
Note that in the proofs of the Claims~1,~2,~3, of loc.cit,\ and
the conclusion of the proof of Proposition~5.3 of loc.cit.,  it 
was nowhere used that $k$ is an algebraic closure of a 
finite field.
\vskip2pt
This concludes the proof of~assertion~1).
\vskip3pt
To~2): Let $\Phi_{\kpy}:\clG_{\tottrL}\to\clG_\kpy$ be a 
geometric rational quotient of $\clG_{\tottrL}$. Then as in the 
proof of assertion~2) of Proposition~5.3 of \nmnm{P}{op}~[P4],
it follows that 
\[
\Phi_\alpha=\Phi_{\kpy}\circ\Phi:\clG_{\tottrK}
\to\clG_{\kpy}=:\clG_\alpha
\] 
is an abstract rational quotient of $\clG_{\tottrK}$ with
Kummer homomorphism $\hat\phi_\alpha=
\hat\phi\circ\hat\phi_{\kpy}:\widehat{\!\kappa}_y\to\whK$.
We show that $\Phi_\alpha$ satisfies hypothesis~ii) 
from Proposition~\ref{charratquot},~1). 
Indeed, $\clG_\alpha:=\clG_{\kpy}$ has
$\lxzl{\clG_\alpha}:=\lxzl{\kpy}=\jmath_{\kpy}(\kappa_y^\times)$ 
as canonical divisorial lattice, and since 
$\Phi_{\kpy}:\clG_{\tottrL}\to\clG_\kpy$ is defined by 
the $k$-embedding $\kpy\hra L$, on has by definitions that
$\hat\phi_{\kpy}(\lxzl{\kpy})=\jmath_L(\kappa_y^\times)_\pel
\subseteq\jmath_L(L\tms)_\pel$. Thus finally, 
$\hat\phi_{\kpy}(\lxzl{\kpy})\subseteq\jmath_L(L\tms)\subseteq\lxzl L^0$
by Propostion~\ref{specres2},~2) applied to $L|l$.
On the other hand, by~Proposition~\ref{specres2},~1),
one has that $\hat\phi(\lxzl L^0)=\epsilon\cdot\lxzl K^0$
for some $\epsilon\in\Zell\tms$, and therefore:
\[
\hat\phi_\alpha(\clL_{\clG_\alpha})=
\hat\phi\big(\hat\phi_{\kpy}(\clL_{\kpy})\big)\subset
\hat\phi(\clL^0_L)=\epsilon\cdot\clL^0_K.
\]
Hence by applying assertion~1) we have: $\Phi_\alpha$
is a geometric quotient of $\clG_{\tottrK}$, etc.
\end{proof}
\vskip2pt
\noindent
F) {\it Concluding the proof of Theorem~\ref{mainthm}\/}
\vskip5pt
First, by Proposition~\ref{charratquot} above if follows
that in the context of Theorem~\ref{mainthm} one can
recover the geometric rational quotients of $\clG_{\tottrK}$
from $\pic K$ endowed with $\Inrdiv(K)$, and
that the recipe to do so is invariant under isomorphisms
$\Phi\in{\rm Isom}^{\rm c}(\pia K,\pia L)$ which map \
$\Inrdiv(K)$ onto $\Inrdiv(L)$. Conclude by applying 
Main Result from [P3], Introduction.
\newpage
\section{Proof of Theorem~\ref{appl1}}
\noindent
A) \ {\it Recovering the nature of $k$\/}
\vskip2pt
In  this sub-section we prove the first assertion of 
Theorem~\ref{appl1}. Precisely, for each non-negative 
integer $\delta\geq0$, we will give inductively on $\delta$ a 
group theoretical recipe $\eudim(\delta)$ in terms of 
$\clG_{\qtottrK}$, thus in terms of $\pic K$, such that 
$\eudim(\delta)$ is true if and only if 
\vskip5pt
$(*)$ \ $\td K k >\dim(k)$ \ and \ $\dim(k)=\delta$.
\vskip5pt
\noindent
The simplify language, recovering the above information
about $k$ will be called ``recovering the nature of $k$.'' 
Note that ${\rm char}(k)$ is not part of ``nature of $k$'' 
in the above sense. Moreover, $\eudim(\delta)$ will be 
invariant under isomorphisms as follows: If $L|l$ is a 
further function field over an algebraically closed field 
$l$, and $\clG_{\qtottrK}\to\clG_{\qtottrL}$ is an 
isomorphism, then $\eudim(\delta)$ holds for $K|k$ if 
and only if $\eudim(\delta)$ holds for $L|l$.
\vskip3pt
Before giving the recipes $\eudim(\delta)$, let us recall 
the following few facts about generalized quasi prime 
divisors, in particular, the facts from \nmnm{P}{op}~[P4],~[P5], 
Section~4, and~\nmnm{T}{opaz}~[To2]. 
\vskip3pt
First, letting 
$\clQ(K|k)$ be the set of all the quasi prime divisors 
of $K|k$, let $\clT^{\hpp1}(K)\subset\pia K$ the 
topological closure of $\cup_{\bfv\in\clQ(K|k)}\,\mT_\bfv$
in $\pia K$. Further, for $l\subseteq k$ an algebraically
closed subfield, let $\clQ_l(K|k)$ be the set of all 
the quasi prime divisors $\bfv$ with $\bfv|_l=\bfw|_l$, 
and $\clT^{\hpp1}_l(K)\subset\pia K$ be the topological 
closure of $\cup_{\bfv\in\clQ_l(K|k)}\,\mT_\bfv$.
Then by~[P5],~Theorem~A, one has: 
$\clT^{\hpp}_l(K)\subseteq\clT^{\hpp}(K)$ consist of 
minimized inertia elements, and $\clT^{\hpp}_l(K)$ 
consists of minimized inertia elements at valuations 
$v$ with $v|_l=\bfw|_l$. 
\vskip3pt
Second, for every generalized quasi prime
$r$-divisor $\bfw$, recall the canonical exact sequence
\[
1\to\mT_\bfw\to\mZ_\bfw\to
\mZ_\bfw/\mT_\bfw=:\Pi^1_\Kbfw\to1,
\]
and recalling that for $\bfw\leq\bfv$, one has
$\mT_\bfw\subset\mT_\bfv$, $\mZ_\bfv\subset\mZ_\bfw$,
we endow $\Pi^1_\Kbfw=\mZ_\bfw/\mT_\bfw$ with
its \defi{minimized inertia/decomposition groups}
$\mT_{\bfv/\bfw}:=\mT_\bfv/\mT_\bfw
\subset\mZ_\bfv/\mT_\bfw=:\mZ_{\bfv/\bfw}
\subset\Pi^1_\Kbfw$, which in turn, give rise in the 
usual way to the \defi{total quasi decomposition graph} 
$\clG_{\clQ^{\rm tot}_\Kbfw}$ for $\Kbfw|k\bfw$.%
\footnote{\hhb2Recall that if $\chr(\Kbfw)=\ell$, the groups 
$\mT_{\bfv/\bfw}\subset\mZ_{\bfv/\bfw}\subset\Pi^1_\Kbfw$
are not Galois groups over $\Kbfw$.} 
\vskip2pt
Further, let
$\clT^{\hpp}_l(\Kr)\subseteq\clT^{\hpp}(\Kr)
\subset\Pi^1_\Kbfw$ be the images of 
$\clT^1_l(K)\cap\mZ_\bfw\subseteq\clT^1(K)\cap\mZ_\bfw$ 
under the canonical projection $\mZ_\bfw\to\Pi^1_\Kbfw$.
Then $\clT^{\hpp1}(\bullet)$ behaves 
functorially, in the sense that for $\bfw\leq\bfv$, one has: 
The image of $\clT^{\hpp1}(\Kbfw)\cap\mZ_{\bfv/\bfw}$ 
under $\mZ_{\bfv/\bfw}\to\Pi^1_\Kbfv$ equals
$\clT^{\hpp1}(\Kbfv)$. 
\vskip3pt
Finally, recall that a pro-$\ell$ abelian group $G$ 
endowed with a system of procyclic subgroups
$(T_\alpha)_\alpha$ is called \defi{complete 
curve like}, if there exists a system of generators 
$(\tau_\alpha)_\alpha$ with $\tau_\alpha\in T_\alpha$ 
such that letting $T\subseteq G$ be the closed 
subgroup of $G$ generated by $(\tau_\alpha)_\alpha$, 
the following hold:
\vskip3pt
\begin{itemize}[leftmargin=20pt]
\item[i)] $\prod_\alpha\tau_\alpha=1$ and this is the 
only profinite relation satisfied by $(\tau_\alpha)_\alpha$ in
$G$.\footnote{\hhb2This implies by definition that 
$\tau_\alpha\to1$ in $G$, thus every open subgroup 
of $G$ contains almost all $T_\alpha$.}
\vskip2pt
\item[ii)] The quotient $G/T$ is a finite $\lvZ_\ell$-module.
\end{itemize} 

\vskip3pt
A case of special interest is that of quasi prime
$r$-divisors $\bfw$, with $r=d-1$, where $d=\td Kk$,
thus $\td\Kbfw{k\bfw}=1$. Then 
$\bfw K/\bfw k\cong\lvZ^{d-1}$ and the following
hold: Let $(t_2,\dots,t_d)$ be a system of elements 
of $K$ such that $(\bfw t_2,\dots,\bfw t_d)$
define a basis of $\bfw K/\bfw k$, and 
$t_1\in\clO\tms_\bfw$ be such that its residue
$\oli t_1\in\Kbfw$ is a separable transcendence 
basis of $\Kbfw|k\bfw$. Then $(t_1,\dots,t_d)$
is a separable transcendence basis of $K|k$, and 
letting $\kone\subset K$
be the relative algebraic cloure of $k(t_2,\dots,t_d)$
in $K$, it follows that $\td K\kone=1$, and 
$\kone\bfw=k\bfw$. Hence we are in the situation
of~Section~4 from~[P5],~Theorem~4.1, thus 
we have: 
\begin{fact} 
\label{redtocurves}
In the above notations, the following hold:
\vskip3pt
\begin{itemize}[leftmargin=30pt]
\item[{\rm I)}] for every non-trivial element
$\sigma\in\clT^{\hpp1}(\Kr)$ there exists a 
unique quasi prime divisor $\bfv_\sigma>\bfw\,$ 
such that $\sigma\in\mT_\bfv$, thus the image of
$\sigma$ in $\Pi^1_\Kr$ lies in $\mT_{\bfv/\bfw}$.
\vskip2pt
\item[{\rm II)}] Let $(T_\alpha)_\alpha$ be a maximal 
system of distinct maximal cyclic subgroups of 
$\,\Pi^1_\Kr$ satisfying one of the following conditions:
\vskip3pt
\begin{itemize}[leftmargin=30pt]
\item[i)] $T_\alpha\subset\clT^1(\Kr)$ for 
each $\alpha$.
\vskip2pt
\item[ii)] $T_\alpha\subset\clT^1_l(\Kr)$ 
for each $\alpha$.
\end{itemize}
\vskip2pt
Then $\Pi^1_\Kr$ endowed with $(T_\alpha)_\alpha$ 
is complete curve like if and only if: 
\vskip3pt
\begin{itemize}[leftmargin=25pt]
\item[a)] $\kr$ is an algebraic closure of a finite field, 
provided~\hhb2{\rm i)} is satisfied.
\vskip2pt
\item[b)] $\lr=\kr$, provided~\hhb2{\rm ii)} is satisfied.
\end{itemize}
\vskip2pt
Moreover, if either~i), or~ii), is satisfied, then
$(T_\alpha)_\alpha$ is actually the set of the
minimized inertia groups $T_\alpha=\mT_{v_\alpha}$ 
at all the prime divisors 
$v_\alpha:=\bfv_\alpha/\bfw$ of $\Kbfw|k\bfw$. 
\end{itemize} 
\end{fact}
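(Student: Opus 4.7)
The entire statement lives inside the residue function field $\Kbfw\hhb1|\hhb1k\bfw$, which by hypothesis on $\bfw$ is a one-variable algebraic function field. My plan is therefore to reduce everything to the curve case and invoke \nmnm{P}{op}~[P5],~Theorem~4.1, after verifying that the inertia set $\clT^1(\Kr)$ (respectively $\clT^1_l(\Kr)$) in the statement matches the set of minimized inertia elements at prime divisors of $\Kr\hhb1|\hhb1\kr$ that is the input of that result. The preparatory choice of the separable transcendence basis $(t_1,\dots,t_d)$ with $(\bfw t_2,\dots,\bfw t_d)$ a $\lvZ$-basis of $\bfw K/\bfw k$ and $\oli t_1$ a separable transcendence basis of $\Kr\hhb1|\hhb1\kr$, together with the relatively algebraic closure $\kone$ of $k(t_2,\dots,t_d)$ in $K$, is exactly the bridge: it identifies the generalized quasi prime divisors $\bfv>\bfw$ of $K|k$ of rank $d$ with the prime divisors of $\Kr\hhb1|\hhb1\kr$ (equivalently, the closed points of the smooth projective model of $\Kr$), and the corresponding $\mT_{\bfv/\bfw}\subset\Pi^1_\Kr$ with the minimized inertia at those points.

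For part~I), the plan is to use the lattice structure of minimized inertia/decomposition groups recalled in Subsection~B): if $0\neq\sigma\in\clT^1(\Kr)$ lifts to $\tilde\sigma\in\clT^1(K)\cap\mZ_\bfw$, then $\tilde\sigma$ lies in some $\mT_\bfv$; if it lay in $\mT_{\bfv_1}\cap\mT_{\bfv_2}$ for two distinct generalized quasi prime divisors, the intersection property ($\mT_{\bfv_1}\cap\mT_{\bfv_2}\neq1$ implies the existence of a unique $\bfv$ with $\mT_\bfv=\mT_{\bfv_1}\cap\mT_{\bfv_2}$) would produce a maximal such $\bfv_\sigma$. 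Finally, because $\tilde\sigma\in\mZ_\bfw$ forces $\mT_{\bfv_\sigma}\subseteq\mZ_\bfw$, and this is in turn equivalent to $\bfv_\sigma\geq\bfw$ (by the criterion $\mZ_{\bfv_1}\cap\mZ_{\bfv_2}\neq1\Leftrightarrow\mT_{\bfv_1}\cap\mT_{\bfv_2}\neq1$); the strict inequality $\bfv_\sigma>\bfw$ comes from the image of $\sigma$ being non-trivial in $\Pi^1_\Kr=\mZ_\bfw/\mT_\bfw$.

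For part~II), the plan is to apply the curve-case result of~[P5],~Theorem~4.1 to $\Kr\hhb1|\hhb1\kr$. The key structural input is: for a smooth projective curve $X$ over an algebraically closed base with $\chr\neq\ell$, the maximal pro-$\ell$ abelian fundamental group $\pia{1,X}\cong\lvZ_\ell^{2g}$ is a \emph{finite} $\lvZ_\ell$-module iff the base field is an algebraic closure of a finite field --- otherwise one picks up the infinite-rank Tate module of $\Pic^0(X)$. Thus the quotient $\Pi^1_\Kr/\langle T_\alpha\rangle_\alpha$ being a finite $\lvZ_\ell$-module with a single product-of-generators relation (i.e., the complete curve-like condition) forces $\kr=\oli\lvF_q$ in case~i), and $\lr=\kr$ in case~ii) (using that inertia elements at valuations trivial on $l\bfw$ generate only the piece of the fundamental group coming from the curve over $l\bfw$). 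The converse direction is exactly the classical description of the tame fundamental group of a smooth affine curve over an algebraic closure of a finite field. The ``moreover'' clause follows from the maximality requirement: once the subgroups $T_\alpha$ lie inside $\clT^1(\Kr)$ (resp.\ $\clT^1_l(\Kr)$) and collectively generate a subgroup of $\Pi^1_\Kr$ of finite co-rank, each maximal cyclic such subgroup is forced by part~I) and the intersection property to coincide with some $\mT_{\bfv_\alpha/\bfw}$.

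I expect the main obstacle to be a clean bookkeeping of the two systems $\clT^1(K)$ and $\clT^1_l(K)$ under the projection $\mZ_\bfw\to\Pi^1_\Kr$, in particular showing that passing to the image exactly selects, in case~ii), the inertia at prime divisors of the curve $\Kr\hhb1|\hhb1\kr$ which are trivial on $l\bfw$; the functoriality $\clT^1(\bullet)$ recalled before the fact is what makes this identification work, but the trivialization on $l\bfw$ versus $\kr$ has to be tracked carefully to align with hypothesis~b) in~[P5],~Theorem~4.1. Once that identification is in place, both~I) and~II) follow from the curve case directly.
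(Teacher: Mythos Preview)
Your plan matches the paper's treatment exactly: the paper does not give an independent proof of this Fact but simply records it as the output of \nmnm{P}{op}~[P5],~Theorem~4.1, after the preparatory choice of $(t_1,\dots,t_d)$ and $\kone$ puts us in the situation of~[P5],~Section~4. Your reduction to the curve case via that setup, and your handling of~I) through the lattice properties of $\mT_\bullet\subset\mZ_\bullet$ recalled in Subsection~B), are in line with this.

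One point to correct in your heuristic for~II): the claim that $\pia{1,X}\cong\lvZ_\ell^{2g}$ is a finite $\lvZ_\ell$-module \emph{iff} the base is an algebraic closure of a finite field is false --- $\lvZ_\ell^{2g}$ is always finitely generated, and the Tate module of $\Pic^0(X)$ is $\lvZ_\ell^{2g}$ over any algebraically closed base with $\chr\neq\ell$. The actual obstruction is different. In case~i), if $\kr$ is not an algebraic closure of a finite field, then $\clT^1(\Kr)$ contains, in addition to the inertia at the closed points of the curve, inertia at genuinely \emph{non-divisorial} quasi prime divisors (coming from non-trivial valuations on $\kr$). These extra procyclic subgroups destroy the single-relation presentation: the product relation $\prod_a\tau_a=1$ already holds over the closed points alone, so adjoining further independent $T_\alpha$ either introduces new relations or forces $G/T$ to be the same finite module with redundant generators --- either way the pair $(\Pi^1_\Kr,(T_\alpha)_\alpha)$ fails to be complete curve like in the sense of the definition. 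The same mechanism governs case~ii) with $l\bfw$ in place of the prime field. This is precisely what~[P5],~Theorem~4.1 (and, in an earlier incarnation,~[P4],~Lemma~4.2) establishes, so your citation is correct even though the Tate-module reasoning is not.
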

\vskip2pt
\noindent
We are now prepared to give the recipes $\eudim(\delta)$.
\vskip5pt
\noindent \ \
$\bullet$ The recipe $\eudim(0)$: 
\vskip5pt
We say that $K|k$ satisfies $\eudim(0)$ if and only if 
for every quasi prime divisor $\bfw$ of $K|k$ with
$\td\Kbfw{k\bfw}=\td K k-1$, one has: $\Pi^1_\Kbfw$ 
endowed with any maximal system of maximal pro-cyclic 
subgroups $(T_\alpha)_\alpha$ satisfying condition~i) 
above is complete curve like. 
\vskip5pt
\noindent \ \
$\bullet$ Given $\delta>0$, and the recipes 
$\eudim(0),\dots,\eudim(\delta-1)$, we define $\eudim(\delta)$ 
as follows:
\vskip5pt
First, if $\td K k \leq\delta$, we say that $\eudim(\delta)$ does
not hold for $K|k$. For the remaining discussion, 
we suppose that $\td K k >\delta$, and proceed as follows:
\vskip5pt
For $\delta'<\delta$, let $\clD_\delta(K)$ be the set of  
all the minimized quasi divisorial subgroups 
$\mT_\bfv\subset\mZ_\bfv$ of $\pia K$ such that 
$\Pi_\Kbfv^1$ endowed with the family
$(\mT_{\bfv'\hm2/\bfv})^{\phantom1}_{\bfv<\bfv'}$,
\underbar{does not} satisfy $\eudim(\delta')$ 
for any $\delta'<\delta$. We set 
$\Inr_\delta(K):=\cup_{\bfv\in\clD_\delta}\mT_\bfv\subset\pia K$, 
and notice that $\Inr_\delta(K)$ consists of minimized inertia
elements (by its mere definition), and it is closed under taking 
powers. Hence by \nmnm{P}{op}~[P5],~Introduction, Theorem~A, 
the topological closure $\oli\Inr_\delta(K)\subset\pia K$ consists 
of minimized inertia elements in $\pia K$, and it is closed under 
taking powers, because $\Inr_\delta(K)\subset\pia K$ was 
so. For $d:=\td K k $ and every quasi $(d-1)$-divisorial 
subgroup $\mT_\bfv\subset \mZ_\bfv$ of $\pia K$,
we set $\Inr_\bfv:=\oli\Inr_\delta(K)\cap \mZ_\bfv$, 
and note that $\Inr_\bfv$ consists of minimized inertia 
elements of $\pia K$ which are contained in $\mZ_\bfv$,  
it is topologically closed, and closed under taking powers. 
Thus the image $\pi_\bfv(\oli\Inr_\bfv)\subset\Pi_{\Kbfv}^1$ 
under the canonical projection 
$\pi_\bfv:\mZ_\bfv\to\mZ_\bfv/\mT_\bfv=\Pi_{\Kbfv}^1$
consists of minimized inertia elements, is topologically 
closed, and closed under taking powers. Finally, by 
Fact~\ref{redtocurves},~I), above, it follows that for every 
$\sigma\in\Inr^1_\bfv$ which has a nontrivial image
under $\mZ_\bfv\to\Pi^1_\Kbfv$, there 
exists a unique minimal valuation $\bfv_\sigma$ such that 
$\sigma$ is a minimized inertia element at $\bfv_\sigma$, 
and in particular, $\bfv_\sigma>\bfv$ by loc.cit. Hence 
$\mT_\bfv\subset\mT_{\bfv_\sigma}$, and moreover, 
since $\td \Kbfv {k\bfv}=1$, it follows that $\bfv_\sigma$ is 
a quasi prime $d$-divisor of $K|k$. Thus we have:
\begin{fact} 
\label{factbfvi}
Let $\bfv_i$, $i\in I_\bfv$, be the set of distinct
quasi prime divisors of $K|k$ which satisfy: $\bfv_i>\bfv$ 
and $\mT_{\bfv_i}$ contains some $\sigma\in\Inr^1_\bfv$ 
with $\sigma\not\in\mT_\bfv$. Then the following hold: 
\vskip2pt
\begin{itemize}
\item[{i)}] $\bfv_i$ are quasi prime $d$-divisors of $K|k$.
\vskip4pt
\item[{ii)}] Setting $\mT_{v_i}:=\mT_{\bfv_i/\bfv}\subset\Pi_\Kbfv^1$,
one has $\mT_{v_i}\cap\mT_{v'_i}=\{1\}$ for $\bfv_i\neq\bfv_{i'}$.
\vskip2pt
\item[{iii)}] $\Inr^1_\bfv\subset\cup_i \mT_{\bfv_i}$, thus the
image of $\Inr^1_\bfv$ under $\mZ_\bfv\to\Pi^1_\Kbfv$ 
equals $\cup_i \mT_{v_i}$.
\end{itemize}
\end{fact}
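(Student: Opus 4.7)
My plan is to deduce all three assertions directly from Fact~\ref{redtocurves},~I), combined with the intersection and rank properties of minimized quasi-divisorial inertia recalled in Section~2,~B). The central observation, already isolated in the paragraph preceding the statement, is that every $\sigma\in\Inr^1_\bfv$ with non-trivial image in $\Pi^1_\Kbfv$ possesses a unique minimal quasi prime divisor $\bfv_\sigma>\bfv$ with $\sigma\in\mT_{\bfv_\sigma}$, and that $\bfv_\sigma$ is automatically a quasi prime $d$-divisor since $\bfv$ has rank $d-1$ while no quasi prime divisor of $K|k$ can have rank exceeding $d=\td Kk$.

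For assertion~i), I would argue that each $\bfv_i$ coincides with some such $\bfv_\sigma$. By the definition of $I_\bfv$, there is some $\sigma\in\Inr^1_\bfv\setminus\mT_\bfv$ lying in $\mT_{\bfv_i}$; minimality forces $\bfv_\sigma\leq\bfv_i$. But $\bfv_i>\bfv$ strictly, so $\bfv_i$ has rank at least $d$, and since ranks cannot exceed $d$, the rank of $\bfv_i$ is exactly $d$, yielding~i). For use in subsequent parts I would additionally note that $\mT_{\bfv_\sigma}\subseteq\mT_{\bfv_i}$ with both free of $\Zell$-rank $d$ must be an equality, so $\bfv_\sigma=\bfv_i$ by the criterion $\bfv=\bfv'\iff\mT_\bfv=\mT_{\bfv'}$ recalled in Section~2,~B).

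For assertion~ii), I would invoke the fact from Section~2,~B) that if $\mT_{\tlbfvm_1}\cap\mT_{\tlbfvm_2}\neq1$ for distinct generalized quasi prime divisors $\tlbfv_1,\tlbfv_2$, then there exists a unique generalized quasi prime divisor $\tlbfv$ realizing the intersection as $\mT_\tlbfvm$. Applied to $\bfv_i\neq\bfv_{i'}$ with $\mT_\bfv\subseteq\mT_{\bfv_i}\cap\mT_{\bfv_{i'}}$, this produces such a $\tlbfv$ with $\bfv\leq\tlbfv\leq\bfv_i,\bfv_{i'}$. If $\tlbfv$ had rank $d$, the rank-comparison argument from~i) would give $\tlbfv=\bfv_i=\bfv_{i'}$, a contradiction; since $\tlbfv\geq\bfv$ already has rank at least $d-1$, we conclude $\tlbfv=\bfv$, i.e.\ $\mT_{\bfv_i}\cap\mT_{\bfv_{i'}}=\mT_\bfv$. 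Quotienting by $\mT_\bfv$ yields the desired $\mT_{v_i}\cap\mT_{v_{i'}}=\{1\}$.

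For assertion~iii), the elementwise inclusion $\Inr^1_\bfv\subseteq\bigcup_i\mT_{\bfv_i}$ splits according to the behaviour of $\sigma\in\Inr^1_\bfv$: if $\sigma\in\mT_\bfv$, then $\sigma$ lies in every $\mT_{\bfv_i}$ because $\mT_\bfv\subseteq\mT_{\bfv_i}$, while if $\sigma\notin\mT_\bfv$, then $\sigma\in\mT_{\bfv_\sigma}=\mT_{\bfv_i}$ by~i). For the matching equality of images in $\Pi^1_\Kbfv$, I would use that $\Inr^1_\bfv=\oli\Inr_\delta(K)\cap\mZ_\bfv$ is closed and, as the intersection of the power-closed set $\oli\Inr_\delta(K)$ with a subgroup, is itself closed under taking powers; by compactness of $\mZ_\bfv$, the image $\pi_\bfv(\Inr^1_\bfv)\subseteq\Pi^1_\Kbfv$ is closed and still power-closed, and since for each $i$ it contains a non-trivial element of the pro-cyclic rank-one group $\mT_{v_i}\cong\Zell$, taking topological closure of the powers of this element recovers the whole of $\mT_{v_i}$. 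This density/closure step in~iii) is the only ingredient beyond bookkeeping; everything else follows formally from Fact~\ref{redtocurves},~I), and the lattice theory of quasi-divisorial inertia.
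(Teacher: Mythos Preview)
Your approach is exactly the paper's: Fact~\ref{factbfvi} is not given a separate proof there but is asserted as a direct consequence of the paragraph immediately preceding it, which invokes Fact~\ref{redtocurves},~I), together with the rank/intersection properties of minimized quasi-divisorial inertia recalled in Section~2,~B). Your arguments for~i) and~ii) correctly expand those details, and your derivation of the inclusion $\Inr^1_\bfv\subseteq\cup_i\mT_{\bfv_i}$ in~iii) is fine.

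There is, however, a genuine slip in your argument for the \emph{equality} direction of~iii). You claim that because $\pi_\bfv(\Inr^1_\bfv)$ is closed, closed under powers, and contains a non-trivial element of the procyclic group $\mT_{v_i}\cong\Zell$, the closure of the powers of that element recovers all of $\mT_{v_i}$. That is false: in $\Zell$ the closed subgroup generated by an element $\tau$ is $\tau\cdot\Zell$, which is proper whenever $\tau$ fails to be a topological generator. So your closure step yields at best a finite-index subgroup of each $\mT_{v_i}$, not the whole group. What your argument does establish is the inclusion $\pi_\bfv(\Inr^1_\bfv)\subseteq\cup_i\mT_{v_i}$ together with the fact that the image meets every $\mT_{v_i}$ non-trivially; this already determines the index set $I_\bfv$ and the family $(\mT_{v_i})_i$, which is precisely what is used downstream in the proof of Proposition~\ref{natureofk}. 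The word ``equals'' in the paper's~iii) is most likely a mild overstatement (an earlier formulation in the manuscript records only the inclusion); if one insists on literal equality, a further density argument tracing back to the structure of $\oli\Inr_\delta(K)$ would be needed, and your proof does not supply one.
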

\begin{prop} 
\label{natureofk}
In the above notations suppose that $d:=\td K k >\delta$. 
The one has:
\vskip2pt
\begin{itemize}[leftmargin=30pt]
\item[{\rm1)}] Suppose $\clD_\delta(K)$ is empty. Then 
$\dim(k)<\delta$ and $\dim(k)$ is the maximal $\delta_k$ 
such that there exists some quasi prime divisor $\bfv$ whose 
$\Pi_\Kbfv^1$ endowed with $(\mT_{v_i})^{\phantom i}_{v_i}$ 
satisfies $\eudim(\delta_k)$. Further, a quasi prime divisor 
$\bfv$ of $K|k$ is a prime divisor if and only if $\Pi_\Kbfv^1$ 
endowed with its $(\mT_{v_i})^{\phantom I}_{v_i}$ satisfies 
$\eudim(\delta_k)$.
\vskip2pt
\item[{\rm2)}] Suppose $\clD_\delta(K)$ is non-empty. 
Then $\dim(k)\geq\delta$, and in the notations from 
Fact~\ref{factbfvi} above one has: $\,\dim(k)=\delta$ iff  
the quasi prime $(d-1)$-divisors $\bfv$ of $K|k$ satisfy:
\vskip2pt 
\begin{itemize}[leftmargin=30pt]
\item[{\rm i)}] There exist $\bfv$ such that $I^1_\bfv$ 
is non-empty.
\vskip2pt
\item[{\rm ii)}] If $I^1_\bfv$ is non-empty, then 
$\Pi_{\Kbfv}^1$ endowed with $(\mT_{v_i})^{\phantom I}_{v_i}$ 
is curve like.  
\end{itemize}
\vskip2pt
\item[] If the above conditions are satisfied, a quasi prime divisor 
$\bfv$ of $K|k$ is a prime divisor of $K|k$ if and only if 
$\Pi_\Kbfv^1$ endowed with $(\mT_{v_i})^{\phantom I}_{v_i}$
does not satisfy $\eudim(\delta')$ for any $\delta'<\delta$. 
\vskip2pt
\item[$(*)$] In particular, there exists a group theoretical recipe
to distinguish the divisorial groups $T_v\subset Z_v$ in $\pia K$
among all the quasi divisorial subgroups, thus to recover the set
of divisorial inertia $\Inrdiv(K)\subset\pia K$. Further, the recipes
to check $\,\td K k>\dim(k)$ and to recover
$\Inrdiv(K)$ from $\pic K$ are invariant under isomorphisms
$\Phi\in{\rm Isom}^{\rm c}(\pia K, \pia L)$. 
\end{itemize}
\end{prop}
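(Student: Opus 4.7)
My plan is to argue by induction on $\delta$, using the characterization of residue fields via Fact \ref{redtocurves} together with an Abhyankar-type inequality for the Kronecker dimension under valuations of algebraically closed fields. The base case $\delta=0$ is immediate: the recipe $\eudim(0)$ demands that every quasi prime $(d-1)$-divisor $\bfw$ of $K|k$ makes $\Pi^1_\Kbfw$ endowed with a maximal system of maximal procyclic subgroups in $\clT^{1}(\Kbfw)$ into a complete curve-like pair. By Fact \ref{redtocurves}\,II(a), this is equivalent to $k\bfw$ being an algebraic closure of a finite field. Since one can always take $\bfw$ trivial on $k$ with $k\bfw=k$, the condition forces $\dim(k)=0$; conversely, $\dim(k)=0$ gives $\dim(k\bfw)\le\dim(k)=0$ by Abhyankar's inequality, so every $k\bfw$ is an algebraic closure of a finite field.

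The inductive step rests on the identification
\[
\clD_\delta(K)=\{\,\bfv\text{ quasi prime divisor of }K|k:\dim(k\bfv)\ge\delta\,\}.
\]
Indeed, by induction $\eudim(\delta')$ holds for the pair $(\Pi^1_\Kbfv,\text{quasi-divisorial inertia})$ iff $\dim(k\bfv)=\delta'$ and $\td{\Kbfv}{k\bfv}>\delta'$; the latter is automatic since $\td{\Kbfv}{k\bfv}=d-1\ge\delta>\delta'$. Hence $\bfv\in\clD_\delta$ iff no $\delta'<\delta$ matches $\dim(k\bfv)$, i.e.\ iff $\dim(k\bfv)\ge\delta$. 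Combined with Abhyankar's inequality ---for any non-trivial valuation $u$ on an algebraically closed field $f$, $\dim(fu)\le\dim(f)-1$ regardless of whether the prime field is $\lvF_p$ or $\lvQ$--- we conclude that $\dim(k\bfv)\le\dim(k)$ with equality iff $\bfv$ is trivial on $k$, i.e.\ $\bfv$ is a prime divisor of $K|k$. Part 1 follows at once: $\clD_\delta$ empty means every quasi prime divisor satisfies $\dim(k\bfv)<\delta$, so (taking $\bfv$ trivial on $k$) $\dim(k)<\delta$, and $\dim(k)=\max_\bfv\dim(k\bfv)$, attained exactly at prime divisors; a quasi prime divisor $\bfv$ is a prime divisor iff $\Pi^1_\Kbfv$ satisfies $\eudim(\dim k)$. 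For part 2, $\clD_\delta$ non-empty forces $\dim(k)\ge\delta$ by the same token.

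The heart of part 2 is the curve-like criterion for $\dim(k)=\delta$, using Fact \ref{factbfvi}. For a quasi prime $(d-1)$-divisor $\bfv$, the image in $\Pi^1_\Kbfv$ of $\Inr^1_\bfv=\oli{\Inr_\delta(K)}\cap\mZ_\bfv$ is the family $(\mT_{v_i})_{v_i}$ indexed by quasi prime $d$-divisors $\bfv_i>\bfv$ with $\dim(k\bfv_i)\ge\delta$. If $\dim(k)=\delta$, then $\dim(k\bfv)\le\delta$; the condition $I^1_\bfv\ne\emptyset$ forces $\dim(k\bfv)=\delta$, hence $\bfv|_k$ trivial so $\bfv$ is a prime $(d-1)$-divisor, and each $v_i=\bfv_i/\bfv$ must be trivial on $k\bfv=k$ (non-triviality would strictly drop $\dim(k\bfv_i)$ below $\delta$). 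Thus $(\mT_{v_i})_{v_i}$ exhausts the classical prime-divisor inertia on the curve $\Kbfv|k$, yielding the curve-like structure on $\Pi^1_\Kbfv$ supplied by Fact \ref{redtocurves}\,II(b) applied with $l=k$. Condition i) holds since prime $(d-1)$-divisors of $K|k$ always exist. Conversely, if $\dim(k)>\delta$, one selects a quasi prime $(d-1)$-divisor $\bfv$ with $\dim(k\bfv)=\delta$; then $I^1_\bfv$ contains, besides classical refinements, also valuations $w$ of $\Kbfv$ non-trivial on $k\bfv$ whose composite $w\circ\bfv$ still has residue Kronecker dimension $\ge\delta$ (available precisely because $k\bfv$ itself has dimension $>0$ when picked below $k$). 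These extra inertia elements in $(\mT_{v_i})$ break the only-relation condition of curve-like.

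The key technical hurdle will be this last step: producing enough non-classical refinements $\bfv_i>\bfv$ with $\dim(k\bfv_i)\ge\delta$ to demonstrably violate curve-like when $\dim(k)>\delta$. This amounts to a valuation-theoretic construction inside $\Kbfv$, exploiting that when $\dim(k\bfv)>0$ one finds valuations of $k\bfv$ itself with controlled residue dimension (again by Abhyankar), which one extends to valuations of $\Kbfv$; one must then verify that the resulting $(\mT_{v_i})$ includes procyclic subgroups independent from the classical prime-divisor inertia in $\Pi^1_\Kbfv$, and that the resulting relation structure genuinely departs from the curve-like one. Once part 2 is established, the consequence $(*)$ is formal: the value $\delta=\dim(k)$ is group-theoretically determined, and prime divisors are identified as the quasi prime divisors $\bfv$ whose $\Pi^1_\Kbfv$ satisfies $\eudim(\dim k)$, so $\Inrdiv(K)=\cup_v T_v\subset\pia K$ is recovered from $\pic K$. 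Invariance under $\Phi\in\Isom^{\rm c}(\pia K,\pia L)$ is automatic since every ingredient ---identification of minimized quasi-divisorial inertia via \nmnm{P}{op}~[P5] and \nmnm{T}{opaz}~[T1], the canonical projections $\mZ_\bfv\to\Pi^1_\Kbfv$, the topological closure of $\Inr_\delta$, and the inductive construction of $\clD_\delta$ and $I^1_\bfv$--- is phrased in purely group-theoretic terms preserved by $\Phi$.
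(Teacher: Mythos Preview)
Your inductive framework and the identification
$\clD_\delta(K)=\{\bfv:\dim(k\bfv)\ge\delta\}$
match the paper exactly, and Part~1) is fine. There are, however, two genuine problems in Part~2).

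\textbf{Forward direction: the role of the closure is not addressed.}
You write that $I^1_\bfv$ is indexed by the quasi prime $d$-divisors $\bfv_i>\bfv$ with $\dim(k\bfv_i)\ge\delta$, and use this both to conclude that $I^1_\bfv\neq\emptyset$ forces $\bfv$ to be a prime $(d-1)$-divisor, and that the $(\mT_{v_i})$ then exhaust the prime-divisor inertia on the curve $\Kbfv|k$. But $\Inr^1_\bfv=\oli{\Inr_\delta(K)}\cap\mZ_\bfv$ involves the \emph{topological closure}, so a priori the $\bfv_i$ that occur are not simply the refinements of members of $\clD_\delta$. When $\dim(k)=\delta$ one has $\clD_\delta(K)=\{$prime divisors$\}$, and the paper invokes \nmnm{P}{op}~[P2],~Theorem~B to identify $\oli{\Inr_\delta(K)}$ with the full tame inertia at $k$-valuations. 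Only then do both conclusions follow: no $k$-valuation can refine a $\bfv$ non-trivial on $k$, so $I_\bfv=\emptyset$ there; and for $\bfv$ trivial on $k$ the $v_i=\bfv_i/\bfv$ run over \emph{all} prime divisors of $\Kbfv|k$, which is what Fact~\ref{redtocurves} needs to yield curve-like. You must supply this closure analysis.

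\textbf{Converse direction: the choice of $\bfv$ is wrong.}
Assuming $\dim(k)>\delta$, you pick $\bfv$ with $\dim(k\bfv)=\delta$ and then look for $w$ on $\Kbfv$ \emph{non-trivial on $k\bfv$} such that $w\circ\bfv$ still has residue Kronecker dimension $\ge\delta$. By your own Abhyankar inequality this is impossible: non-triviality of $w|_{k\bfv}$ forces $\dim\big((k\bfv)w\big)\le\dim(k\bfv)-1=\delta-1$. So the mechanism you propose for producing ``extra'' procyclic subgroups cannot work at such a $\bfv$. The paper instead takes $\bfv$ to be a \emph{prime} $(d-1)$-divisor (so $k\bfv=k$ with $\dim(k)>\delta$), fixes an algebraically closed subfield $l\subset k$ with $\td kl=1$, hence $\dim(l)\ge\delta$, and observes that every quasi prime divisor of $K|k$ trivial on $l$ lies in $\clD_\delta$. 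Via~[P2], Theorems~A,~B, the closure $\oli{\Inr_\delta(K)}$ then contains all tame inertia at $l$-trivial valuations, so after projecting to $\Pi^1_\Kbfv$ the system $(\mT_{v_i})_{v_i}$ contains the inertia of \emph{all} quasi prime divisors of $\Kbfv|k$ that are trivial on $l$. Since $l\subsetneq k$ strictly, Fact~\ref{redtocurves},~II(b) says this is \emph{not} curve-like. That is the step you flag as the ``key technical hurdle''; your proposed route to it does not succeed, and the correct one hinges on the intermediate field $l$ together with the density results of~[P2].
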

\begin{proof}
To 1): First let $\dim(k)<\delta$. Then for all quasi prime 
divisors $\bfv$ of $K|k$ we have $\dim(k)\geq\dim(k\bfv)$,
hence $\td K k >\delta>\dim(k)\geq\dim(k\bfv)$. Thus 
taking into account that $\td \Kbfv{k\bfv}=\td K k -1\geq\delta$, 
we get: $\td \Kbfv{k\bfv}>\dim(k\bfv)$, and $\Pi_\Kbfv^1$ 
endowed with $(\mT_{v_i})^{\phantom I}_{v_i}$ satisfies
the recipe $\eudim(\delta')$ with $\delta'=\dim(k\bfv)<\delta$.
Thus $\bfv\not\in\clD_\delta(K)$, etc. Second, suppose that
$\clD_\delta(K)$ is empty. Equivalently, $\Pi_\Kbfv^1$ 
endowed with $(\mT_{v_i})^{\phantom I}_{v_i}$ satisfies
$\eudim(\delta')$ for some $\delta'<\delta$ for each
quasi prime divisor $\bfv$. Hence choosing $\bfv$ to be a
prime divisor of $K|k$ we get: $k\bfv=k$, and $\Kbfv|k$
satisfies $\eudim(\delta')$ for some $\delta'<\delta$.
Thus $\dim(k)=\delta'<\delta$. The description of the
divisors $\bfv$ of $K|k$ is clear, because $\bfv$ is a
prime divisor of $K|k$ iff $\bfv$ is trivial on 
$k$ iff $\dim(k\bfv)=\dim(k)$. 
\vskip3pt
To 2): First suppose that $\dim(k)=\delta$. Recall that a
quasi prime divisor $\bfv$ is a prime divisor iff $\bfv$ is
trivial on $k$ iff $\dim(k)=\dim(k\bfv)$. Therefore, if $\bfv$
is a prime divisor, then $\eudim(\delta')$ is not satisfied
by $\Pi_\Kbfv^1$ endowed with $(\mT_{v_i})^{\phantom I}_{v_i}$
for any $\delta'<\delta$. Second, if $\bfv$ is not a prime 
divisor, then $\bfv$ is not trivial on $k$, and reasoning as 
above, one gets that $\eudim(\delta')$ holds for $\Pi_\Kbfv^1$ 
endowed with $(\mT_{v_i})^{\phantom I}_{v_i}$ 
for $\delta'=\dim(k\bfv)$. Thus
finally it follows that $\clD_\delta(K)$ consists of exactly
all the prime divisors of $K|k$. Hence by \nmnm{P}{op}~[P2],
Introduction, Theorem~B, it follows that $\oli\Inr_\delta(K)$
consists of all the tame inertia at all the $k$-valuations of
$K|k$. Thus reasoning as in the proof of Proposition~3.5
of \nmnm{P}{op}~[P4], but using all the divisorial subgroups
instead of the quasi divisorial subgroup, and 
$\oli\Inr_\delta(K)$ instead of 
$\Inrtm(K)$ as in loc.cit., 
it follows that the {\it flags of divisorial subgroups\/} --as 
introduced in Definition~3.4 of loc.cit.-- can be recovered by 
a group theoretical recipe from $\pia K$ endowed with 
$\oli\Inr_\delta(K)$. Hence in the notations from 
Fact~\ref{factbfvi} above, one has the following: Let 
$\bfv$ be a quasi $(d-1)$-prime divisor of $K|k$. 
\vskip2pt
\begin{itemize}[leftmargin=20pt]
\item[{a)}] Suppose that $\bfv$ is trivial on $k$. Then by the 
discussion above, $(T_{v_i})_{v_i}$ is exactly the set of all 
the divisorial inertia subgroups in $\Pi^1_\Kbfv$. By applying 
Fact~\ref{redtocurves} above, one concludes 
that~$\pia{\Kbfv}$ endowed with $(T_{v_i})_{v_i}$ is curve like, etc.
\vskip2pt
\item[{b)}] Suppose that $\bfv$ is non-trivial on $k$. We claim that
the set $I_\bfv$ from Fact~\ref{factbfvi} above is empty. 
Indeed, since each non-trivial $\sigma\in\oli\Inr_\delta(K)$ 
is tame inertia element at some $k$-valuations of $K|k$, it 
follows that $\bfv_\sigma$ is a $k$-valuation. Thus one cannot
have $\bfv_\sigma>\bfv$, i.e., $\bfv_\sigma\not\in I_\bfv$.
\end{itemize}
Now suppose that $\dim(k)>\delta$. We show that there 
exist prime $(d-1)$-divisors $\bfv$ such that $I_\bfv$ is
not empty, but $\Pi^1_\Kbfv$ endowed with $(T_{v_i})_{v_i}$
is not curve like. Indeed, let $l\subset k$ be an algebraically
closed subfield with $\td k l =1$. Then $\dim(l)\geq\delta$,
and therefore, if $\bfv$ is a quasi divisor of $K|k$ which is
trivial on $l$, we have $\dim(k\bfv)\geq\dim(l)\geq\delta$.
Hence $\eudim(\delta')$ does not hold for $\Kbfv\md k\bfv$
for all $\delta'<\delta$. Therefore, $\bfv\in\clD_\delta$. We
then can proceed as in the proof of Proposition~4.4 of 
\nmnm{P}{op}~[P4], but
using all the quasi prime divisors of $K|k$ which are trivial
on $l$ instead of using all the (maximal) quasi prime divisors
of $K|k$. Namely let $\Inrtm_{\hp l}(K)$ be the set of the
inertia elements at valuations which are trivial on $l$. Then
by \nmnm{P}{op}~[P2], Introduction, Theorem~A, the set
$\Inrtm_{\hp l}(K)$ is closed in $\pia K$, and by loc.cit.\
Theorem~B, the set $\Inrtmqdiv_l(K)$ of tame inertia 
at the quasi divisors $\bfv$ which are trivial on $l$ is dense 
$\Inrtm_{\hp l}(K)$. On the other hand, by the discussion 
above, every quasi prime divisor $\bfv$ which is trivial 
on~$l$ lies in $\clD_\delta(K)$. Hence we conclude that 
$\Inrtmqdiv_l(K)\subseteq\Inr_\delta(K)$, and so, 
$\Inrtm_{\hp l}(K)$ is contained in the closure 
$\oli\Inr_\delta(K)$. In other words, if $\bfv$ is
any prime $(d-1)$-divisor $\bfv$ of $K|k$, and 
$\pi_\bfv: Z_\bfv\to\pia{\Kbfv}$ is the canonical
projection, it follows that $\pi_\bfv\big(\oli\Inr_\delta(K)\big)$
equals all the tame inertia at valuations which are trivial 
on $l$. Therefore, in the notations from Fact~\ref{factbfvi}
above, the set $I_\bfv$ is non-empty, and 
$(T_{v_i})^{\phantom I}_{v_i}$
consists of the inertia groups of all the quasi prime divisors 
of $\Kbfv\md k$ which are trivial on $l$. Since $l\subset k$ 
strictly, by Fact~\ref{redtocurves} it follows that 
$\pia{\Kbfv}$ endowed with all $(T_{v_i})_{v_i}$ is not 
curve like.
\vskip2pt
Finally, the last assertion~$(*)$ of the Proposition is an 
obvious consequence of the recipes described at~1) and~2).
\end{proof}
\vskip5pt
\noindent
B) {\it Concluding the proof of Theorem~\ref{appl1}\/}
\vskip7pt
By Proposition~\ref{natureofk}, there are group theoretical 
recipes to check whether $\td Kk >\dim(k)$ and if so, the
group theoretical recipes recover the divisorial inertia 
$\Inrdiv(K)$ from the group theoretical information encoded
in $\pic K$. Further, the
recipes to do so are invariant under isomorphisms 
$\Phi\in\Isom^{\rm c}(\pia K,\pia L)$. Conclude by applying 
Theorem~\ref{mainthm}.
%
%
%
%
%
%
%
%
\vskip20pt
\centerline{\Large  Appendix:} 
\vskip10pt
\centerline{\normalsize \bf ON THE ORDER OF THE REDUCTION OF POINTS}
\vskip3pt
\centerline{\normalsize \bf ON ABELIAN SCHEMES}
\vskip15pt
\centerline{\small PETER JOSSEN}
\vskip20pt
%
%
%
%
%
\newcommand{\abs}[1]{\vert#1\vert}
\newcommand{\angl}[1]{\langle #1\rangle}
\newcommand{\eps}{\varepsilon}
\newcommand{\tq}{\: | \:}
\newcommand{\pt}{\mbox{ for all }}
\newcommand{\qqet}{\qquad\mbox{and}\qquad}
\newcommand{\upet}{\textup{\'et}}

\newcommand{\Tell}{\lvT_\ell}
\newcommand{\Vell}{\textup{V}_{\!\ell}}

\newcommand{\IQ}{\lvQ}
\newcommand{\IZ}{\lvZ}

\newcommand{\fp}{\eup}
\newcommand{\fl}{\mathfrak l}

\newcommand{\HH}{{\rm H}}

\newcommand{\lto}{\xrightarrow{\quad}}
\newcommand{\lmapsto}{\longmapsto}

%
%
%
\newtheorem{theo}{Theorem}
\renewcommand{\thethm}{\arabic{thm}}
\newtheorem{lemm}[theo]{Lemma}
\newtheorem{propo}[theo]{Proposition}
\newtheorem{fapt}[theo]{Fact}

\setcounter{section}{1}
\setcounter{equation}{1}

\begin{par}
We fix an integral scheme $S$ of finite type over 
$\Spec\lvZ$, and a prime number $\ell$ different from 
the characteristic of the function field of $S$. Our goal 
is to establish the following
\end{par}

\begin{theo}\label{Thm:Main} {\it
Let $A$ be an abelian $S$-scheme, and $P\in A(S)$ 
be a point of infinite order. Then the set of closed points 
$s:\Spec(\kappa) \to S$ such that $\ell$ divides the 
order of the image $P_s\in A(\kappa)$ of $P$ has positive 
Dirichlet density in $S$, thus it is Zariski dense as well.\/}
\end{theo}

\begin{par}
In the special case where $S\subseteq\Spec\clO_k$,
with $\clO_k$ the ring of integers of a number field
$k$, the Theorem~\ref{Thm:Main} above was 
proven by \nmnm{P}{ink}, see~\cite{Pk},~Theorem~2.8
(and some arguments in our strategy are similar to his). 
The proof rests on two classical theorems: These are 
the general version of the Mordell--Weil theorem, stating 
that $A(S)$ is a finitely generated (abelian) group, 
and a generalization of Chebotarev's density theorem.
\end{par}
\vskip10pt
\noindent
{\bf Remarks/Basic Facts.} \ Since $\ell$ is different from the
characteristic, $S$ has an open dense subsets on which 
$\ell$ is invertible. Thus replacing $S$ by such an open
dense subset, without loss of generality, we can suppose
that $\ell$ is invertible on $S$. Let 
$\pi_1 := \pi_1^\upet(S,\overline \eta)$ be the \'etale
fundamental group of $S$.
We view $\IZ$ as the constant group $S$-scheme, and for
every abelian $S$-scheme $A$, we let $\Tell A$ be the 
$\ell$-adic Tate module of $A$ viewed as a $\pi_1$-module.
\begin{itemize}[leftmargin=25pt]
\item[a)]  Consider the complex $M$ of group 
$S$-schemes, viewed as \defi{1-motive} in the sense 
of Deligne:
\[ 
M = [u:\IZ\to A], \quad u(1)=P
\]
\item[b)] One associates with $M$, in a functorial way, 
a finitely generated free $\IZ_\ell$-module $\Tell M$ with 
continuous $\pi_1$-action, by setting $\Tell M =: 
\plim{\scriptscriptstyle e}M_{\ell^e}(\overline k)$, where
\[
M_{\ell^e}(\overline k) := 
  \{(Q,n)\in A(\overline k)\times \IZ \tq \ell^e Q
    =nP\}\hp\big/\hp\{(nP,\ell^e n)\in A(\overline k)\times \IZ\tq n\in\IZ\}
\]
Hence $M_{\ell^e}(\overline k)$ is the group of 
$\overline k$-points of a finite \'etale group $S$-scheme 
$M_{\ell^e}$ killed by $\ell^e$.
\vskip2pt
\item[c)] For later use, we notice that the elements of 
$\Tell M$ are represented by sequences $(P_i,n_i)_{i\in\lvN}$, 
with $P_i \in A(\overline k)$ and $n_i \in \IZ$, satisfying 
the three relations: $\ell^iP_i=n_iP$, $\ell P_i-P_{i\hbox{\hp-}1}=m_i P$, 
$n_i-n_{i\hbox{\hp-}1}=\ell^{i\hbox{\hp-}1}m_i$, 
for suitable $m_i\in\lvZ$. Two such sequences $(P_i,n_i)_i$, 
$(P'_i,n'_i)^{\phantom{'}}_i$ represent the same element of $\Tell M$ 
if there exists a sequence $(m_i)_i$ in $\IZ$ such that
\[
\ell^im_i = n_i-n'_i,\quad m_iP = P_i-P'_i \ \hbox{ for all } i\in\lvN\,.
\]
\item[d)]  Letting $\pi_1$ act trivially on $\lvZ_\ell$, 
one gets canonically an exact sequence of $\pi_1$-modules
\[
0 \to \Tell A \to \Tell M \to \IZ_\ell \to 0\,.
\]
In particular, $\Tell M$ is a free $\IZ_\ell$-module 
of rank $2g+1$, where $g=\dim_S(A)$.
\vskip2pt
\item[e)] Let $\pi_1 \to \GL(\Tell M)$ define $\Tell M$ as
a $\pi_1$-module, and consider the $\ell$-adic Lie groups
\[
L^M := \im\big(\pi_1 \to \GL(\Tell M)\big),\quad 
L^A := \im\big(\pi_1 \to \GL(\Tell A)\big).
\]
By restriction, $L^M\srjr L^A\!$, and set 
$L^M_A:=\ker(L^M\srjr L^A)= 
\{\sigma\in L^M \tq \sigma|_{\Tell A}=\id\}$. 
\vskip5pt
\item[f)] Finally, we notice that if $P \in A(S)$ is a 
torsion point, then the sequence of $\pi_1$--modules 
$0 \to \Tell A \to \Tell M \to \IZ_\ell \to 0$ splits after passing to 
$\IQ_\ell$--coefficients, and it follows that the group $L^M_A$ 
is trivial in that case. 
\noindent
The point is that the converse holds as well:

\end{itemize}
\vskip5pt
\begin{propo}\label{Pro:KummerInjection}
{\it If $P \in A(S)$ has infinite order, then the group $L^M_A$ 
is not trivial.\/}
\end{propo}
\begin{proof} This is a special case of 
\nmnm{J}{ossen}~[Jo],~Theorem~2.
\end{proof}
\vskip3pt
\begin{par}
Let $s \hra S$ be a closed point, and $G_{\kappa(s)}\to\pi_1$
be the embedding defined by some path $\oli s\to\oli\eta$.
The image $F_s\in\pi_1$ of the Frobenius element of
$G_{\kappa(s)}$ is called a \defi{Frobenius element 
over $S$}. We shall make use of the following version of 
Chebotarev's Density Theorem: 
\end{par}

\begin{theo}[Artin--Chebotarev]\label{Thm:ArtinChebo}
{\it The set of all Frobenius elements is dense in $\pi_1$.\/}
\end{theo}

\begin{par}
\noindent This follows in principle from~Theorem~7 
in~\cite{Serre63}, but see rather~\nmnm{H}{olschbach}~\cite{Ho} 
for a complete proof (of this and other related assertions). 
The deduction of our Theorem \ref{Thm:ArtinChebo} from 
\nmnm{S}{erre}'s~Theorem~7 in~\cite{Serre63} goes as 
follows: Let $\pi_1 \to G$ be a finite 
quotient of $\pi_1$, corresponding via the defining property 
of the fundamental group to a finite \'etale Galois cover 
$X$ of $S$. Let $R \subseteq G$ be any subset stable under 
conjugation. Then the Artin--Chebotarev Density Theorem 
for the scheme $S$ states that the set of closed points in $S$ 
whose Frobenius conjugacy class in $G$ is contained in $R$ 
has Dirichlet density $|R|/|G|$, and in particular it is Zariski
dense in $S$. Moreover, every element of $G$ lies in a 
Frobenius conjugacy class. This being true for all finite 
quotients of $\pi_1$, the statement of~Theorem~\ref{Thm:ArtinChebo} 
follows.
\end{par}

%
\begin{lemm}\label{Lem:FrobUndOrdnung}
{\it Let $s = \Spec\kappa\to S$ be a closed point and let 
$F_s \in \pi_1$ be a Frobenius element over $s$. The 
order of the image of $P$ in $A(\kappa)$ is prime to 
$\ell$ if and only if the homomorphism 
$(\Tell M)^{\angl{F_s}} \to \IZ_\ell$ is surjective.\/}
\end{lemm}

\begin{proof}
The order of $P$ in the finite group $A(\kappa)$ is 
prime to $\ell$ if and only if $P$ is $\ell$--divisible 
in $A(\kappa)$. From the description of elements 
of $\Tell M$ by sequences as indicated at~Remarks 
and~Basic~Facts,~c), above, it follows that this is the case 
if and only if $\Tell M$ contains an element fixed by 
$F_s$ which is mapped to $1\in \IZ_\ell$ by the 
canonical projection $\Tell M \to \IZ_\ell$.
\end{proof}

\begin{par}
We are done once we have shown that 
$\Sigma_P:=\{\sigma\in\pi_1\mid (\Tell M)^{\angl\sigma} 
\to \IZ_\ell \ \hbox{is \emph{not} surjective}\}$ 
contains a nonempty open subset of $\pi_1$, provided 
$P \in A(S)$ has infinite order. We can also work 
with the image of $\pi_1$ in $\GL(\Tell M)$ in place 
of $\pi_1$, which we denoted~by~$L^M$.
\end{par}
%
\begin{propo}\label{Pro:OffenNichtLeer}
{\it If $P \in A(S)$ has infinite order, $\Sigma_P$
contains a nonempty open subset of $L^M$.\/}
\end{propo}

\begin{proof}
Let $G$ be the subgroup of $\GL(\Tell M)$ consisting 
of those elements which leave invariant the subspace 
$\Tell A$ of $\Tell M$ and act trivially on the quotient 
$(\Tell M/\Tell A)=\IZ_\ell$. Relative to an appropriate 
$\IZ_\ell$-basis of $\Tell M$, the group $G$ consists 
of matrices of the form
$$
\left(\begin{array}{c|c} U & v\\ \hline 0 & 1 \end{array} 
 \right), \qquad U\in\GL(2g,\IZ_\ell),\quad v\in \IZ^{2g}
$$
where $g$ is the relative dimension of $A$ over $S$. 
The group $L^M$ is a closed subgroup of $G$. By 
Proposition \ref{Pro:KummerInjection}, there exist
$\sigma\in L^M_A$, $\sigma\neq1$. As matrices, 
elements $\sigma\neq1$ are of the form:
\[
\sigma = \left(\begin{array}{c|c} \id_{2g} & w\\ \hline 0 & 1 
\end{array} \right), \qquad w\in \IZ^{2g},\quad w\neq 0
\]
Let $N\geq 0$ be an integer such that 
$\ell^{\,\hbox{-}N} = \abs{\ell^N}_\ell < \max_i\abs{w_i}_\ell$, 
where $w_i$ are the coefficients of~$w$. We consider 
the subset $X$ of $G$ consisting of the matrices of the form
\[
\left(\begin{array}{c|c} \id_{2g}+\ell^NM & w+\ell^Nv\\ 
\hline 0 & 1 \end{array} \right), \qquad M\in
\mathrm{M}(2g\times 2g,\IZ_\ell),\quad v\in \IZ^{2g}
\]
This set is open and closed in $G$, hence $X\cap L^M$ 
is open and closed in $L^M$. Moreover, the intersection 
$X\cap L^M$ is not empty because it contains $\sigma$. 
We are done if we show that $X\cap L^M$ is contained 
in $\Sigma$, so let us show that for all $x\in X$, the map 
$(\Tell M)^{\angl x}\to\IZ_\ell$ is not surjective. Let 
$t\in (\Tell M)^{\angl x}$. We~claim that the image of $t$ 
in $\IZ_\ell$ lies in $\ell\,\IZ_\ell$. Indeed, set
$$
x = \left(\begin{array}{c|c} \id_{2g}+\ell^NM & w+\ell^Nv\\ 
\hline 0 & 1 \end{array} \right), \qquad t = 
\left(\begin{array}{c} t' \\ \lambda \end{array} \right),\quad 
t'\in \IZ_\ell^{2g},\quad \lambda\in\IZ_\ell
$$
We assume that $xt=x$, hence
$$0 = (x-\id_{2g+1})t = \left(\begin{array}{c} 
\ell^N(Mt'+ \lambda v) + \lambda w \\ 0 \end{array} \right)
$$
However, the equality $\ell^N(Mt'+ \lambda v) + \lambda w = 0$ 
can only hold if we have $\abs{\lambda}_\ell<1$, 
because of our choice of $N$. The image of $t$ in 
$\IZ_\ell$ is $\lambda$, and the last inequality shows 
$\lambda\in\ell\,\IZ_\ell$.
\end{proof}

\begin{proof}[Proof of Theorem \ref{Thm:Main}]
We just have to put the pieces together: If 
$P \in A(S)$ has infinite order, there exists 
by~Proposition~\ref{Pro:OffenNichtLeer} 
and~Theorem~\ref{Thm:ArtinChebo} a closed 
point $s:\Spec(\kappa) \to S$ and a Frobenius element 
$F_s \in \pi_1$ over $s$, such that 
$(\Tell M)^{\angl{F_s}} \to \IZ_\ell$ is not surjective. 
By~Lemma~\ref{Lem:FrobUndOrdnung} that means 
that the order of the image of $P$ in $A(\kappa)$ is 
divisible by $\ell$, so we are done. In fact, the above 
arguments show that the set $\Sigma_P$ is precisely 
the set of all the $s\in S$ such that the specialization 
$P_s$ of $P$ at $s$ has order divisible by $\ell$. 
On the other hand, with the notion of Dirichlet 
density (of closed points) as introduced
in~\nmnm{S}{erre}~\cite{Serre63}, it follows that
$\Sigma_P$ has positive Dirichlet density (which 
in principle can be explicitly given). 
\end{proof}
\vskip10pt

\vskip10pt
{\footnotesize
\sc
{\small Florian Pop}
\vskip0pt\noindent
Department of Mathematics
\vskip0pt\noindent
University of Pennsylvania
\vskip0pt\noindent
DRL, 209 S 33rd Street
\vskip0pt\noindent
Phila\-delphia, PA 19104,  USA
\vskip2pt\noindent
{\it E-mail address\/}: \ {\tt pop@math.upenn.edu}
\vskip2pt\noindent
{\it URL\/}: \ {\tt http://math.penn.edu/{\auau}pop}
\vskip10pt\noindent
{\small Peter Jossen}
\vskip0pt\noindent
Department of Mathematics
\vskip0pt\noindent
ETH Zurich 
\vskip0pt\noindent
8092 Zurich, 
Switzerland
\vskip5pt
\noindent
{\it E-mail address\/}: \ {\tt peter.jossen@gmail.com} 
\vskip2pt\noindent
{\it URL\/}: \ {\tt https://www.math.ethz.ch/the-department/people.html?u=jossenpe}
}

\end{document}